\numberwithin{equation}{section}
\theoremstyle{plain}
\newtheorem{thm}{Theorem}[section]
\newtheorem{lemma}{Lemma}[section]
\newtheorem{corollary}{Corollary}[section]
\newtheorem{proposition}{Proposition}[section]
\theoremstyle{remark}
\begin{document}

\begin{frontmatter}
\title{Self-normalized Cram\'{e}r type moderate deviations for martingales}
\runtitle{Self-normalized Cram\'{e}r type moderate deviations}

\begin{aug}
\author{\fnms{Xiequan} \snm{Fan}\thanksref{a,e1}\ead[label=e1,mark]{fanxiequan@hotmail.com}}
\author{\fnms{Ion} \snm{Grama}\thanksref{b,e2}\ead[label=e2,mark]{ion.grama@univ-ubs.fr}}
\author{\fnms{Quansheng} \snm{Liu}\thanksref{b,e3}\ead[label=e3,mark]{quansheng.liu@univ-ubs.fr}}
\and
\author{\fnms{Qi-Man} \snm{Shao}\thanksref{c,e4}\ead[label=e4,mark]{qmshao@sta.cuhk.edu.hk}}

\address[a]{Center for Applied Mathematics,
Tianjin University, Tianjin 300072,  China.
\printead{e1}}

\address[b]{Universit\'{e} de Bretagne-Sud, LMBA, UMR CNRS 6205,
 Campus de Tohannic, 56017 Vannes, France.
\printead{e2,e3}}

\address[c]{Department of Statistics, The Chinese University of Hong Kong, Shatin, NT, Hong Kong.
\printead{e4}}

\runauthor{X. Fan et al.}

\affiliation{Tianjin University, Universit\'{e} de Bretagne-Sud and The Chinese University of Hong Kong}

\end{aug}

\begin{abstract}
Let $(X _i,\mathcal{F}_i)_{i\geq1}$ be a  sequence of martingale differences.
Set $S_n=\sum_{i=1}^n X_i $ and $[ S]_n=\sum_{i=1}^n X_i^2.$
We prove a Cram\'er type moderate deviation expansion for $\mathbf{P}(S_n/\sqrt{[ S]_n} \geq x)$ as $n\to+\infty.$ Our results partly extend  the earlier work of \citep{JSW03}  for independent random variables.
\end{abstract}

\begin{keyword}
\kwd{Martingales}
\kwd{self-normalized sequences}
\kwd{Cram\'{e}r's moderate deviations}
\end{keyword}
\end{frontmatter}

\section{Introduction}
Let $(X_i)_{i\geq1}$ be a sequence of independent random variables with zero means and finite variances: $\mathbf{E}X_i=0$ and
$0< \mathbf{E}X_i^2< \infty$ for all $i\geq 1.$
Set
$$S_n=\sum_{i=1}^n X_i,\ \ \ \ \ B_n^2 = \sum_{i=1}^n \mathbf{E}X_i^2,  \ \  \ \ \  V_n^2= \sum_{i=1}^n X_i^2.$$
It is well-known that under the Lindeberg condition the central limit theorem (CLT) holds
$$\sup_{x\in \mathbf{R}}  \Big|\mathbf{P}(S_n/B_n \leq x ) -\Phi(x) \Big | \rightarrow 0\ \ \   \textrm{as}\ n\rightarrow \infty,$$
where $\Phi(x)$ denotes the standard normal distribution function.
Cram\'er's moderate deviation expansion stated below gives an estimation of the relative error
of $\mathbf{P}(S_n/B_n \geq x ) $ to $1-\Phi(x).$
If $(X_i)_{i\geq1}$ are identically distributed with $\mathbf{E}e^{t_0 \sqrt{|X_1|}} < \infty$
for some $t_0 >0,$  then for all $0\leq x =o(n^{1/6})$ as $n\rightarrow \infty,$
\begin{equation}\label{Cramer00}
\frac{\mathbf{P}(S_n/ B_n \geq x)}{1-\Phi \left( x\right)}=1+o(1) \ \ \ \ \ \
\textrm{and}  \ \ \  \ \ \
\frac{\mathbf{P}(S_n/  B_n \leq -x)}{\Phi \left( -x\right) } =1+o(1).
\end{equation}
Expansion is available for all $0\leq x =o(n^{1/2})$ if the moment generating function exists.
We refer to Chapter VIII of \citep{Petrov75} for further details  on the subject.
%Recall that for the validity of such an expansion it is necessary to assume the Cramér
%It is well known that a finite exponential moment assumption is necessary for a Cram\'{e}r large deviation result (\ref{Cramer00})
%for the standardized partial sums  $S_n/B_n.$

However,  the limit theorems for  self-normalized   partial sums
of independent random variables have put a new countenance on the classical limit theorems. The study of self-normalized  partial sums $S_n/V_n$
originates from Student's $t$-statistic.  Student's $t$-statistic $T_n$ is defined by
\[
T_n = \sqrt{n} \, \overline{X}_n / \widehat{\sigma},
\]
where $$\overline{X}_n = \frac{S_n}{n}  \ \ \ \textrm{and}\  \ \ \widehat{\sigma}^2 = \sum_{i=1}^n  \frac{(X_i - \overline{X}_n )^2}{ n-1}  .$$ It is known that for all $x\geq0,$
\[
\mathbf{P}\Big( T_n  \geq x \Big) = \mathbf{P}\bigg(  S_n/V_n  \geq x \Big(\frac{n}{n+x^2-1} \Big)^{1/2}  \bigg ),
\]
see  \citep{E69}. So, if we get an asymptotic bound on the tail probabilities for self-normalized partial sums,
then we  have an asymptotic bound on the tail probabilities for $T_n.$
 \citep{GGM97} gave a necessary and sufficient condition for the asymptotic
normality.  \citep{BBG96} (see also  \citep{BG96}) obtained the exact Berry-Esseen bound for self-normalized   partial sums.  \citep{S97} established a self-normalized Cram\'{e}r-Chernoff large deviation without any moment assumptions and
 \citep{S99} proved a self-normalized Cram\'{e}r moderate deviation theorem under  $(2+\rho)$th moments.
If $(X_i)_{i\geq1}$ are independent and identically distributed with  $\mathbf{E}|X_1|^{2+\rho}< \infty, \rho \in (0, 1] ,$ then
for all $0\leq x =o(n^{\rho/(4+2\rho)})$  as $n\rightarrow \infty,$
\begin{equation}\label{Cramer01}
\frac{\mathbf{P}(S_n/ V_n \geq x)}{1-\Phi \left( x\right)}=1+o(1).
%\ \ \ \ \ \ \textrm{and}  \ \ \  \ \ \ \frac{\mathbf{P}(S_n/ V_n<-x)}{\Phi \left( -x\right) } =1+o(1).
\end{equation}
For  symmetric  independent random variables with finite third moments,
 \citep{WJ99}  derived an   exponential
nonuniform Berry-Esseen bound, while   \citep{CG03} further
 refined Wang and Jing's result    and obtained the following
Cram\'{e}r type moderate deviation expansion:
\begin{eqnarray} \label{fsfs123d}
\frac{\mathbf{P}(S_n/ V_n \geq x)}{1- \Phi(x)}  = 1 + O(1) (1+x)^3 B_n^{-3}\sum_{i=1}^n\mathbf{E}|X_i|^3,
\end{eqnarray}
where $O(1)$ is bounded by an absolute constant.
The expansion (\ref{fsfs123d}) was further extended to independent but not necessarily identically distributed random variables by
 \citep{JSW03} under  finite $(2+\rho)$th moments, $\rho \in (0,  1]$,
showing that
\begin{equation}\label{Cramer02}
\frac{\mathbf{P}(S_n/ V_n \geq x)}{1-\Phi \left( x\right)}=\exp\Big\{   O\big( 1\big)(1+x)^{2+\rho}   \varepsilon_{n }^\rho \Big\}
%\ \ \ \textrm{and}  \ \ \ \frac{\mathbf{P}(S_n/ V_n<-x)}{\Phi \left( -x\right) } =1+O\Big( (1+x)^{2+\rho}   \varepsilon_{n }     \Big)
\end{equation}
uniformly for   $0\leq x =o( \min\{ \varepsilon_{n }^{-1}, \ \kappa_n^{-1} \}), $ where
\begin{eqnarray}\label{fgls1.5}
 \varepsilon_{n }^\rho =\sum_{i=1}^n \mathbf{E}|X_i|^{2+\rho} /B_n^{2+\rho}  \ \ \ \textrm{and} \ \ \  \kappa_n^2=\max_{1\leq i\leq n } \mathbf{E}X_i^2 / B_n^2.
\end{eqnarray}
%Notice that if $(X_i)_{i\geq1}$ are identically distributed, then $\varepsilon_{n }$  is of order $n^{-\rho/2}.$
%Thus, the range of validity of (\ref{Cramer02}) % $0\leq x =o(\varepsilon_{n }^{-1/(2+\rho)})$
%becomes $0\leq x =o(n^{\rho/(4+2 \rho)})$.
%Therefore, for i.i.d. random variables the result of (\ref{Cramer02}) implies   (\ref{Cramer01}) with $\rho=1.$
%%One of the main contributions of  Jing, Shao and  Wang \citep{JSW03} is that the exponential moment condition needed for the
%%normalized sum can be considerably reduced to only the finite moment condition of low order.
For further self-normalized Cram\'{e}r type moderate deviation results  for independent random variables
we refer, for example, to    \citep{HSW09},    \citep{LSW13}, and  \citep{SZ16}.
  We also refer to  \citep{DLS09}  and  \citep{SW13}   for recent developments in this area. %and the references therein.

The theory for self-normalized sums of independent random variables
has been studied in depth. However, we are not aware of any such results for martingales.
  For some closely related topic, that is,  exponential inequalities for self-normalized martingales,
we refer to \citep{D99}, \citep{BT08}, \citep{CWM14} and \citep{BDR15}.
The main purpose of this paper is to establish self-normalized Cram\'{e}r type moderate deviation results for martingales.
Let  $(\delta_n)_{n\geq1}  , (\varepsilon_n)_{n\geq1}$ and $(\kappa_n)_{n\geq1}$ be three sequences of nonnegative numbers, such that
$\delta_n \rightarrow 0,$ $\varepsilon_n\rightarrow 0$ and $\kappa_n\rightarrow 0$ as $n\rightarrow \infty.$
Let $(X_i,\mathcal{F}_i)_{i\geq1}$ be a  sequence of martingale differences satisfying
$$\Big|  \sum_{i=1}^n \mathbf{E} [ X _i^2  |  \mathcal{F}_{i-1} ] -B_n^2  \Big| \leq \delta_n^2 B_n^2,$$
$$    \sum_{i=1}^n\mathbf{E}[|X_{i}| ^{2+\rho}  | \mathcal{F}_{i-1}]   \leq  \varepsilon_{n }^\rho    B_n ^{2+ \rho},   $$
and
$$ \max_{1\leq i \leq n} \mathbf{E}[ X_{i} ^2 | \mathcal{F}_{i-1}] \leq \kappa_n^2 B_n^2 \, ,    $$
 where $\rho \in (0, \frac32].$
From Corollary \ref{corollary01} we have
\begin{equation}\label{ghlm}
\mathbf{P}(S_n/V_n \geq x)=(1-\Phi(x))(1+o(1))
\end{equation}
uniformly for   $0\leq x =  o (\, \min\{  \varepsilon_n^{-\rho/(3+\rho)} , \ \delta_n^{-1},\ \kappa_n^{- 1} \})  $ as $n \rightarrow \infty.$
A more general Cram\'er type expansion is obtained in a larger range in our Theorem \ref{th3.1},
from which we  derive a moderate deviation principle for self-normalized martingales.
Moreover, when the condition $\sum_{i=1}^n\mathbf{E}[|X_{i}| ^{2+\rho}  | \mathcal{F}_{i-1}]   \leq  \varepsilon_{n }^\rho B_n^{2+\rho}$ is replaced by a slightly stronger condition
$$\mathbf{E}[|X_{i}| ^{2+\rho}  | \mathcal{F}_{i-1}]   \leq  (\varepsilon_{n }  B_n)^{\rho} \mathbf{E}[ X_{i}  ^{2}  | \mathcal{F}_{i-1}], $$
equality (\ref{ghlm}) holds for a  larger range of
$0\leq x =  o (\, \min\{  \varepsilon_n^{-\rho/(4+2\rho)} , \ \delta_n^{-1} \})$ for $\rho \in (0, 1]$, see Corollary \ref{comore}.
Clearly, our results recover (\ref{Cramer01}) for i.i.d.\ random variables.

 The rest of the paper is organized as follows.
Our main results are stated and discussed in Section \ref{sec2}.
Section \ref{sec3} provides the preliminary lemmas that are used in the proofs of the main results.
In Section  \ref{sec5},  we prove  the main results.
%The rate of convergence in the CLT for martingales under the changed measure,
%which is used in the proofs of the main results, is provided in Section \ref{sec8}.

Throughout the paper the symbols $c$ and $c_\alpha,$  probably  supplied  with some indices,
denote respectively a generic positive absolute constant and a generic positive constant depending only on $\alpha.$
%We also agree that $0^{-1}=\infty.$

\section{Main results}\label{sec2}
\setcounter{equation}{0}
Let $(X_i,\mathcal{F}_i)_{i=0,...,n} $  be a sequence of   martingale differences defined on a
 probability space $(\Omega ,\mathcal{F},\mathbf{P})$,  where $X_0=0 $ and
 $\{\emptyset, \Omega\}=\mathcal{F}_0\subseteq ...\subseteq \mathcal{F}_n\subseteq
\mathcal{F}$ are increasing $\sigma$-fields. Set
\begin{equation}
S_{0}=0,\ \ \ \ \ S_k=\sum_{i=1}^k X_i,\quad k=1,...,n.  \label{xk}
\end{equation}
Then $S=(S_k,\mathcal{F}_k)_{k=0,...,n}$ is a martingale. Denote $B_n^2=\sum_{i=1}^n\mathbf{E}X_i^2$.
Let $[ S]$ and $\langle S \rangle$  be, respectively, the squared variance and the conditional variance  of the
martingale $S,$ that is
\begin{eqnarray*}
[S]_0=0,\ \ \ \ \ [ S]_k=\sum_{i=1}^k X_i^2,\quad k=1,...,n,
\end{eqnarray*}
and
\begin{equation}\label{quad01}
\langle S\rangle_0=0,\ \ \ \ \ \langle S \rangle_k=\sum_{i=1}^k \mathbf{E} [ X_i^2  |  \mathcal{F}_{i-1} ]  ,\quad k=1,...,n.
\end{equation}

In the sequel, we  use the following conditions:
\begin{description}
\item[(A1)]  There exists   $ \delta_n \in [0, \frac14] $ such that
$$ \Big| \sum_{i=1}^n\mathbf{E}[ X_{i}^2  | \mathcal{F}_{i-1}] - B_n^2 \Big| \leq  \delta_n^2 B_n^2 ;$$
\item[(A2)]  There exist   $\rho >0$ and  $ \varepsilon_n \in (0, \frac14]  $ such that
\[
\sum_{i=1}^n\mathbf{E}[|X_{i}| ^{2+\rho}  | \mathcal{F}_{i-1}]\leq   \varepsilon_n^\rho  B_n^{2+\rho};
\]
\item[(A3)]  There exists $ \kappa_n \in (0, \frac14]  $ such that
\[
\mathbf{E}[ X_{i}  ^2  | \mathcal{F}_{i-1}]\leq  \kappa_n^2 B_n^2, \ \ \ 1\leq i \leq n;
\]
\item[(A4)]  There exist  $\rho \in (0, 1]$ and   $ \gamma_n \in (0, \frac14]  $ such that
\[
\mathbf{E}[|X_{i}| ^{2+\rho}  | \mathcal{F}_{i-1}]   \leq  (\gamma_{n }  B_n)^{\rho}\, \mathbf{E}[ X_{i}  ^{2}  | \mathcal{F}_{i-1}], \ \ \ 1\leq i \leq n.
\]
%\item[(A4)]  There exist an absolute constant $c$ and a number  $ \gamma  $ such that
%$$ \mathbf{P}( \left| [ S ]_n-1\right| \geq  \gamma  ) \leq  c\, \gamma.$$
\end{description}

When $\rho \in (0 , 1]$   and $\gamma_n \leq  (16/17)^{1/\rho}/4$,  conditions (A1) and (A4) imply condition (A2) with $\varepsilon_n=(17/16)^{1/\rho}\gamma_n.$
 Thus, we may assume that $\varepsilon_n =O(1)  \gamma_n$ as $n\rightarrow \infty.$
It is also easy to see that condition (A4) implies
condition (A3) with $\kappa_n=\gamma_n, $ see  Lemma \ref{lem3.1}.

In practice, we usually have $\max\{\delta_n, \varepsilon_n,  \gamma_n, \kappa_n \} \rightarrow 0 $ as $n\rightarrow \infty$.
In the case of  sums of i.i.d.\ random  variables,  conditions (A1), (A2), (A3), and (A4) are satisfied with  $\delta_n=0,$ $ \varepsilon_n, \gamma_n, \kappa_n= O(\frac {1} { \sqrt n}).$

Our first main result is the following Cram\'{e}r type moderate deviation for the self-normalized  martingale
$$W_n= S_n /  \sqrt{[ S]_n},$$
under  conditions (A1),  (A2), and (A3).  %For Cram\'{e}r type moderate deviation expansion on standardized martingales, we refer to \citep{FGL13}.

\begin{thm}\label{th3.1}
Assume that   conditions (A1),  (A2), and (A3) are satisfied. Set $$\rho_1=\min\{ \rho,\, 1\}.$$ Then for all $0\leq x =o(\max\{ \varepsilon_n^{-1},  \kappa_n^{-1}  \})$,
\begin{eqnarray}\label{rest2.3}
\frac{\mathbf{P}(W_n \geq x)}{1-\Phi \left( x\right)}  =  \exp\bigg\{ \theta c_{\rho} \Big( x^{2+ \rho_1}  \varepsilon_n^{ \rho_1}+ x^2 \delta_n^2 +(1+x)\big( \varepsilon_n^{\rho/(3+\rho)}  +\delta_n \big) \Big) \bigg \}.
\end{eqnarray}
 Moreover,  the   equality remains valid when $\frac{\mathbf{P}(W_n \geq x)}{1-\Phi \left( x\right)}$ is replaced by $\frac{\mathbf{P}(W_n \leq -x)}{ \Phi \left( -x\right)}$.
\end{thm}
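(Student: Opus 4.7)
The plan is to reduce Theorem \ref{th3.1} to an ordinary (non-self-normalized) Cram\'er expansion for the martingale $S_n/B_n$, which we expect to be established as one of the preliminary results of Section \ref{sec3}. The central device is to sandwich the event $\{S_n/\sqrt{[S]_n} \geq x\}$ between events of the form $\{S_n/B_n \geq x\sqrt{1\mp\eta_n}\}$, at the cost of a ``bad'' event controlling the fluctuation of $[S]_n$ about $B_n^2$.

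First I would pick a fluctuation level $\eta_n$, tuned to $x$, $\delta_n$, $\varepsilon_n$, and $\kappa_n$, and write
\[
\mathbf{P}(W_n \geq x) \leq \mathbf{P}\bigl(S_n/B_n \geq x\sqrt{1-\eta_n}\bigr) + \mathbf{P}\bigl([S]_n < (1-\eta_n) B_n^2\bigr),
\]
together with the matching lower bound obtained by intersecting $\{W_n \geq x\}$ with $\{[S]_n \leq (1+\eta_n)B_n^2\}$. The first term is controlled by the non-self-normalized Cram\'er expansion for martingales, applied at the level $y = x\sqrt{1\mp\eta_n}$; the perturbation $\sqrt{1\mp\eta_n}$ contributes an extra factor $\exp\{O(x^2\eta_n)\}$ via the elementary identity $1-\Phi(x\sqrt{1\mp\eta_n}) = (1-\Phi(x))\exp\{O(x^2\eta_n)\}$, which has to be absorbed into the stated error.

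For the second term I would decompose
\[
[S]_n - B_n^2 = \bigl([S]_n - \langle S\rangle_n\bigr) + \bigl(\langle S\rangle_n - B_n^2\bigr);
\]
the second summand is bounded by $\delta_n^2 B_n^2$ thanks to (A1). The first summand is a martingale with increments $X_i^2 - \mathbf{E}[X_i^2\mid\mathcal{F}_{i-1}]$, which I would handle by truncating $X_i$ at a level proportional to $1/x$ and using (A2) to estimate both the truncated tail and, via interpolation with (A3), the conditional fourth moments $\mathbf{E}[X_i^4\mid\mathcal{F}_{i-1}] \leq (\kappa_n B_n)^{2-\rho_1}\mathbf{E}[|X_i|^{2+\rho_1}\mid\mathcal{F}_{i-1}]$. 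A Freedman/Bernstein-type bound then yields an exponential tail for $[S]_n - \langle S\rangle_n$, with which $\mathbf{P}\bigl([S]_n \notin (1\pm\eta_n) B_n^2\bigr)$ can be made much smaller than $1-\Phi(x)$ provided $\eta_n$ is chosen of order $\varepsilon_n^{\rho_1}x^{\rho_1} + \delta_n + \varepsilon_n^{\rho/(3+\rho)}$, so that both $x^2\eta_n$ and the probability of the bad event fit inside the prescribed error budget.

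The main obstacle is this delicate balancing. The concentration estimate for $[S]_n$ must remain exponentially smaller than $1-\Phi(x)\sim e^{-x^2/2}$ throughout the whole range $x=o(\max\{\varepsilon_n^{-1},\kappa_n^{-1}\})$, while simultaneously $\eta_n$ must stay small enough for the multiplicative perturbation $\exp\{O(x^2\eta_n)\}$ to be dominated by the target error $\exp\{O(x^{2+\rho_1}\varepsilon_n^{\rho_1} + x^2\delta_n^2 + (1+x)(\varepsilon_n^{\rho/(3+\rho)}+\delta_n))\}$. This in turn requires a truncation level finely coordinated with the truncation used in the underlying Cram\'er expansion, so that the two error budgets are mutually consistent. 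Finally, the statement for $\mathbf{P}(W_n \leq -x)/\Phi(-x)$ follows by applying the same argument to the martingale $(-X_i,\mathcal{F}_i)_{i\geq 1}$, since all four conditions (A1)--(A4) are invariant under a sign change.
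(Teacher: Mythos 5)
Your proposal takes a genuinely different route from the paper, and it has a gap that I do not believe can be repaired in the stated range. The decisive failure is in the lower bound. To conclude $\mathbf{P}(W_n\geq x)\geq\mathbf{P}(S_n/B_n\geq x\sqrt{1+\eta_n})-\mathbf{P}([S]_n>(1+\eta_n)B_n^2)$ you must show the subtracted term is negligible relative to $1-\Phi(x)\asymp e^{-x^2/2}$. Under (A1)--(A3) the increments $X_i^2$ of $[S]_n$ are unbounded above and possess only $(2+\rho)/2$ conditional moments, so no Freedman/Bernstein bound applies to the \emph{upper} tail of $[S]_n$; the best available control is polynomial, of the type $\mathbf{P}(|[M]_n-\langle M\rangle_n|\geq\eta)\leq c\,\eta^{-(2+\rho)/2}\varepsilon_n^{\rho}$ (this is exactly what Lemma \ref{slemma58} and the von Bahr--Esseen step (\ref{ineds65}) deliver). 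Already in the i.i.d.\ case, $\mathbf{P}([S]_n>2B_n^2)\geq\mathbf{P}(X_1^2>2B_n^2)$, which is only polynomially small in $n$, whereas $e^{-x^2/2}$ is far smaller for $x$ of order $n^{1/2-\epsilon}$, i.e.\ near the top of the admissible range $x=o(\max\{\varepsilon_n^{-1},\kappa_n^{-1}\})$. The bad-event probability therefore swamps the main term and the sandwich yields no lower bound there. Even on the upper-bound side, where the lower tail of $[S]_n$ can be handled exponentially after truncating $\xi_i$ at a level $c$, balancing the truncation bias $c^{-\rho}\varepsilon_n^{\rho}$ against the exponent forces $\eta_n$ of order $(x\varepsilon_n)^{\rho/(1+\rho)}$, and the resulting factor $\exp\{O(x^2\eta_n)\}$ exceeds the budget $\exp\{O(x^{2+\rho_1}\varepsilon_n^{\rho_1}+(1+x)\varepsilon_n^{\rho/(3+\rho)})\}$ once $x$ is large, since $(x\varepsilon_n)^{\rho/(1+\rho)}\gg(x\varepsilon_n)^{\rho}$.

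This is precisely why the paper does not reduce to a standardized expansion for $S_n/B_n$ (no such expansion appears among its preliminaries). Instead, the change of measure is built from $\zeta_i(\lambda)=\lambda\xi_i-\lambda^2\xi_i^2/2$, so that the quadratic variation is absorbed into the exponential weight: by the arithmetic--geometric mean inequality, $\{\sum_{i}\zeta_i(\lambda)\geq x^2/2\}\subseteq\{M_n\geq x\sqrt{[M]_n}\}$, which gives the lower bound directly under the tilted measure $\mathbf{P}_\lambda$, while a near-converse via $\sqrt{1+y}\geq 1+y/2-y^2/2$ on the event $\{|[M]_n-\langle M\rangle_n|\leq\delta_n+1/(2x)\}$ gives the upper bound. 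The fluctuations of $[M]_n$ then need only be controlled at constant precision, and polynomial bounds suffice because they multiply quantities already of size $e^{-x^2/2}$ (the terms $I_2(x)$ and $\mathcal{E}_1,\mathcal{E}_2,\mathcal{E}_3$ in the proof of Theorem \ref{th2}). The key technical input is the Berry--Esseen bound of Proposition \ref{lem3.5} for the conjugate martingale $Y_n(\lambda)$ under $\mathbf{P}_\lambda$, not a Cram\'er expansion for $S_n/B_n$. Your closing remark about passing to $(-X_i,\mathcal{F}_i)$ for the left tail is correct and is also how the paper argues.
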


Under condition (A2) the best Berry-Esseen bound for standardized martingales is provided by   \citep{H88}.
Assuming $\langle S \rangle_n=B_n^2 $ a.s.,  Haeusler   proved that
\begin{eqnarray*}
\sup_x \Big| \mathbf{P}(S_n /B_n\leq x) -   \Phi \left( x\right)  \Big| \leq C  \Big( \sum_{i=1}^n\mathbf{E}|X_i/B_n
|^{2+\rho}  \Big)^{1/(3+\rho)}  .
\end{eqnarray*}
Moreover, it was showed that  this bound cannot be improved  for martingales with finite $(2+\rho)$th moments. In fact, there exist  positive absolute constant $c$  and a sequence of martingale differences   satisfying
$\mathbf{P}( S_{n} \leq 0) - \Phi \left( 0 \right) \geq c \, \big( \sum_{i=1}^n\mathbf{E}| X_{i}/B_n
|^{2+\rho}  \big)^{1/(3+\rho)} $
for all large enough $n$.
In particular, under  conditions  (A2) and $\langle S \rangle_n=B_n^2 $ a.s., Haeusler's result implies that
\begin{eqnarray}\label{hauslerb}
\sup_x \Big| \mathbf{P}(S_n /B_n\leq x) -   \Phi \left( x\right)  \Big| \leq C  \varepsilon_n^{\rho/(3+\rho)}.
\end{eqnarray}
Notice that
% $\mathbf{P}( W_n \leq 0)=\mathbf{P}( S_n \leq 0), $ and that
Theorem \ref{th3.1} implies that
\begin{eqnarray}\label{self-martingale}
\sup_x \big| \mathbf{P}( W_n \leq x) -   \Phi \left( x\right)  \big| \leq C \big( \varepsilon_n^{\rho/(3+\rho)} + \delta_n \big).
\end{eqnarray}
Under  conditions (A2) and $\langle S \rangle_n=B_n^2 $ a.s.,
the Berry-Esseen bound in (\ref{self-martingale}) for self-normalized martingales is of the same order as the  Berry-Esseen bound  in (\ref{hauslerb}) for standardized martingales.

From Theorem \ref{th3.1}, we obtain the following result about the equivalence to the normal tail.
\begin{corollary}\label{corollary01}
Assume that conditions (A1),  (A2), and (A3) are satisfied with $\rho \in (0, \frac32]$. Then
\begin{eqnarray*}
\frac{\mathbf{P}(W_n \geq x)}{1-\Phi \left( x\right)}=1+o(1) \ \ \ \ \ \
\textrm{and}  \ \ \  \ \ \
\frac{\mathbf{P}(W_n \leq -x)}{\Phi \left( -x\right) } =1+o(1)
\end{eqnarray*}
uniformly for  $0\leq x = o (\, \min\{  \varepsilon_n^{-\rho/(3+\rho)} , \,   \kappa_n^{-1} , \, \delta_n^{-1}   \})$ as $ n \rightarrow \infty.$
\end{corollary}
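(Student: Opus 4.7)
The plan is to derive the corollary directly from Theorem \ref{th3.1} by showing that the quantity inside the exponential in (\ref{rest2.3}) tends to $0$ uniformly over the prescribed range $0\leq x = o(\min\{\varepsilon_n^{-\rho/(3+\rho)}, \kappa_n^{-1}, \delta_n^{-1}\})$. As a preliminary, I would verify that this range sits inside the range of validity of Theorem \ref{th3.1}: since $\rho/(3+\rho) \leq 1$ and $\varepsilon_n \leq 1/4$, one has $\varepsilon_n^{-\rho/(3+\rho)} \leq \varepsilon_n^{-1} \leq \max\{\varepsilon_n^{-1}, \kappa_n^{-1}\}$, so Theorem \ref{th3.1} indeed applies for every $x$ under consideration.

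It then suffices to show, term by term, that the three summands $x^{2+\rho_1}\varepsilon_n^{\rho_1}$, $x^2\delta_n^2$ and $(1+x)(\varepsilon_n^{\rho/(3+\rho)}+\delta_n)$ appearing in the exponent of (\ref{rest2.3}) are all $o(1)$ uniformly in $x$. The easy pieces are $x^2\delta_n^2 = (x\delta_n)^2 \to 0$ and $(1+x)\delta_n \to 0$, both from $x = o(\delta_n^{-1})$, together with $(1+x)\varepsilon_n^{\rho/(3+\rho)} \to 0$ from $x = o(\varepsilon_n^{-\rho/(3+\rho)})$.

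The only step that is not mechanical, and the place where the bound $\rho\leq 3/2$ becomes indispensable, is the treatment of $x^{2+\rho_1}\varepsilon_n^{\rho_1}$. I would split according to the definition $\rho_1 = \min\{\rho,1\}$. For $\rho \in (0,1]$ one has $\rho_1 = \rho$; using $\varepsilon_n < 1$ and $\rho/(2+\rho) \geq \rho/(3+\rho)$, the estimate $x^{2+\rho}\varepsilon_n^\rho = (x\varepsilon_n^{\rho/(2+\rho)})^{2+\rho} \leq (x\varepsilon_n^{\rho/(3+\rho)})^{2+\rho} \to 0$ is immediate. For $\rho \in (1, 3/2]$ one has $\rho_1 = 1$ and the term reads $x^3\varepsilon_n = (x\varepsilon_n^{1/3})^3$; the hypothesis $\rho \leq 3/2$ is equivalent to $\rho/(3+\rho) \leq 1/3$, so $\varepsilon_n^{1/3} \leq \varepsilon_n^{\rho/(3+\rho)}$ and hence $x\varepsilon_n^{1/3} \to 0$. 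This arithmetic balance is exactly what pins down the upper bound $\rho \leq 3/2$ in the statement, and is the only obstacle worth mentioning.

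Combining the three estimates, the exponent in (\ref{rest2.3}) is $o(1)$ uniformly on the stated range, so $\exp(o(1)) = 1 + o(1)$ gives the upper-tail equivalence $\mathbf{P}(W_n \geq x)/(1-\Phi(x)) = 1+o(1)$. The lower-tail equivalence follows by the identical argument applied to the lower-tail version of (\ref{rest2.3}), as guaranteed by the final sentence of Theorem \ref{th3.1}.
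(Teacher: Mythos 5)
Your derivation is correct and is essentially the paper's own (unwritten) argument: the paper presents Corollary \ref{corollary01} as an immediate consequence of Theorem \ref{th3.1}, and your term-by-term verification that the exponent in (\ref{rest2.3}) is $o(1)$ on the stated range — in particular the observation that $\rho\le\frac32$ is exactly the condition $\rho/(3+\rho)\le\frac13$ needed to dominate $x^{3}\varepsilon_n$ by $\bigl(x\varepsilon_n^{\rho/(3+\rho)}\bigr)^{3}$ when $\rho_1=1$ — supplies precisely the routine check the paper omits. The only point left implicit (equally implicit in the paper, whose standing setup has $\delta_n,\varepsilon_n\to0$) is that making $(1+x)\bigl(\varepsilon_n^{\rho/(3+\rho)}+\delta_n\bigr)$ vanish at $x=0$ uses $\delta_n\to0$ and $\varepsilon_n\to0$ themselves, not merely $x=o(\delta_n^{-1})$ and $x=o(\varepsilon_n^{-\rho/(3+\rho)})$.
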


Theorem \ref{th3.1}  also implies  the following moderate  deviation principles (MDP) for self-normalized martingales.
\begin{corollary}\label{corollary02}
Assume  conditions (A1), (A2), and (A3)  with $\max\{\delta_n, \varepsilon_n,  \kappa_n\} \rightarrow 0$ as $n\rightarrow \infty$.
Let $a_n$ be any sequence of real numbers satisfying $a_n \rightarrow \infty$ and $a_n\varepsilon_n\rightarrow 0$
as $n\rightarrow \infty$.  Then  for each Borel set $B$,
\begin{eqnarray}
- \inf_{x \in B^o}\frac{x^2}{2} &\leq & \liminf_{n\rightarrow \infty}\frac{1}{a_n^2}\ln \mathbf{P}\bigg(\frac{  W_n}{a_n }  \in B \bigg) \nonumber \\
 &\leq& \limsup_{n\rightarrow \infty}\frac{1}{a_n^2}\ln \mathbf{P}\bigg(\frac{ W_n}{a_n }    \in B \bigg) \leq  - \inf_{x \in \overline{B}}\frac{x^2}{2} \, ,   \label{MDP}
\end{eqnarray}
where $B^o$ and $\overline{B}$ denote the interior and the closure of $B$, respectively.
\end{corollary}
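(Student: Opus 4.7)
The strategy is to derive the MDP from a pointwise logarithmic asymptotic $a_n^{-2}\ln\mathbf{P}(W_n\geq a_n y)\to -y^2/2$ by way of Theorem~\ref{th3.1}, and then apply the standard routine reducing open/closed set bounds to one-sided tail probabilities.

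First I would fix $y>0$, substitute $x = a_n y$ into Theorem~\ref{th3.1}, and verify that the range and error terms are benign. The hypothesis $a_n\varepsilon_n\to 0$ gives $a_n y = o(\varepsilon_n^{-1})$, and hence $a_n y = o(\max\{\varepsilon_n^{-1},\kappa_n^{-1}\})$, so (\ref{rest2.3}) is applicable. In the exponent of (\ref{rest2.3}) the three error quantities satisfy
\[
x^{2+\rho_1}\varepsilon_n^{\rho_1} = a_n^2\,y^{2+\rho_1}(a_n\varepsilon_n)^{\rho_1},\qquad x^2\delta_n^2 = a_n^2\,y^2\delta_n^2,\qquad (1+x)\bigl(\varepsilon_n^{\rho/(3+\rho)}+\delta_n\bigr) = O(a_n)\,o(1),
\]
each of order $o(a_n^2)$ in view of the assumptions $a_n\to\infty$, $a_n\varepsilon_n\to 0$, $\delta_n\to 0$. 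Combining this with the Mills-ratio expansion $\ln(1-\Phi(z)) = -z^2/2 + O(\ln z)$ as $z\to\infty$ then yields
\[
\frac{1}{a_n^2}\ln\mathbf{P}(W_n\geq a_n y)\longrightarrow -\frac{y^2}{2},
\]
with the symmetric statement holding for the left tail.

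For the upper bound, given a Borel set $B$ let $r = \inf\{|y|:y\in\overline{B}\}$. If $r=0$ the bound $-\inf_{\overline{B}}y^2/2 = 0$ is trivial; otherwise the inclusion $\{W_n/a_n\in\overline{B}\}\subseteq\{W_n\geq a_n r\}\cup\{W_n\leq -a_n r\}$ combined with the two one-sided log-asymptotics gives $\limsup a_n^{-2}\ln\mathbf{P}(W_n/a_n\in B)\leq -r^2/2$. For the lower bound, pick $y\in B^o$ with $y>0$ (the case $y<0$ is symmetric; the case $y=0$ makes the bound trivial via the CLT-type estimate (\ref{self-martingale})), choose $\eta>0$ small enough that $(y-\eta,y+\eta)\subset B^o$, and write
\[
\mathbf{P}(W_n/a_n\in B^o)\geq \mathbf{P}(W_n\geq a_n(y-\eta)) - \mathbf{P}(W_n\geq a_n(y+\eta)).
\]
Since $(y+\eta)^2 > (y-\eta)^2$, the first term dominates the second on the exponential scale, giving $\liminf a_n^{-2}\ln\mathbf{P}(W_n/a_n\in B^o)\geq -(y-\eta)^2/2$; letting $\eta\downarrow 0$ and supremizing over $y\in B^o$ produces the required lower bound.

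I expect the argument to be entirely routine once the pointwise log-asymptotic is in place; the only step needing some care is the uniform control of the error term in (\ref{rest2.3}) on each bounded interval of $y$-values, and this is immediate from the three displayed estimates.
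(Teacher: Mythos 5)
Your proposal is correct and follows essentially the same route as the paper: both apply Theorem \ref{th3.1} with $x=a_ny$, observe that the error terms are $o(a_n^2)$ under $a_n\varepsilon_n\to 0$ and $\delta_n\to0$, invoke the Gaussian tail asymptotics, and then run the standard reduction of the closed-set bound to $\inf_{\overline{B}}|y|$ and of the open-set bound to a small interval around a near-optimal point of $B^o$. The only difference is presentational (you isolate the pointwise log-asymptotic first, the paper argues inline), so there is nothing substantive to add.
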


The last corollary shows that the convergence speed of MDP  depends only on $\varepsilon_n$ and it has nothing to do with the convergence speeds  of $\kappa_n$ and $ \delta_n$.

For i.i.d.\ random variables, the self-normalized MDP  was established  by  \citep{S97}.
See also   \citep{JLZ12} for  non-identically distributed  random variables.

The other main results concern some improvements of   Theorem \ref{th3.1}
when condition (A3)   is replaced by  the stronger condition (A4). Theorems
\ref{th1} and \ref{th2} below give respectively lower and upper bounds, while Theorem \ref{rezultCRMD} gives a
Cram\'{e}r type moderate deviation expansion sharper than that in  Theorem \ref{th3.1}.

%% Theorems
%%\ref{th1} and \ref{th2} together imply Theorem \ref{rezultCRMD}, but contain more information than in Theorem %%\ref{rezultCRMD}.

%The following two theorems give  lower and upper bounds  for self-normalized moderate deviation probabilities.
\begin{thm}\label{th1}
Assume that conditions  (A1),  (A2), and (A4) are satisfied.
\begin{description}
  \item[\textbf{[i]}]  If $\rho \in (0, 1)$, then for all $0\leq x =o(\gamma_n^{-1})$,
\begin{equation} \label{t1ie1}
\frac{\mathbf{P}(W_n \geq x)}{1-\Phi \left( x\right)}\geq \exp\bigg\{ -c_{\rho  } \bigg( x^{2+\rho}  \varepsilon_n^\rho+ x^2 \delta_n^2 +(1+x)\left(  x^\rho \gamma_n^\rho +   \gamma_n^\rho      +  \delta_n   \right) \bigg) \bigg \}. \ \ \
\end{equation}
  \item[\textbf{[ii]}]   If $\rho =1$, then  for all $0\leq x  =o(\gamma_n^{-1})$,
\begin{equation}\label{t1ie12}
\frac{\mathbf{P}(W_n \geq x)}{1-\Phi \left( x\right)}\geq \exp\bigg\{ -c  \bigg( x^{3}  \varepsilon_n + x^2 \delta_n^2+ (1+x)\left(x \gamma_n+  \gamma_n|\ln \gamma_n|     +  \delta_n   \right) \bigg) \bigg \}.
\end{equation}
\end{description}
 Moreover,  the two equalities above remain valid when $\frac{\mathbf{P}(W_n \geq x)}{1-\Phi \left( x\right)}$ is replaced by $\frac{\mathbf{P}(W_n \leq -x)}{ \Phi \left( -x\right)}$.
\end{thm}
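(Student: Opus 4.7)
My plan is to prove the lower bound by the classical Cram\'er conjugate-measure method, adapted to martingales in the spirit of the proof of Theorem~\ref{th3.1}, but exploiting the \emph{pointwise} character of condition (A4) to sharpen the lower-order error terms. First I would truncate the differences at scale $a=\gamma_n B_n$, setting $\overline X_i = X_i\,\mathbf{1}_{\{|X_i|\leq a\}}$; by (A4) and Markov's inequality, $\mathbf{P}(\overline X_i \neq X_i \text{ for some } i\leq n)= O(\gamma_n^\rho)$, which is absorbed into the exponent as part of the $(1+x)\gamma_n^\rho$ contribution. I then introduce the exponential martingale
\[
\mathcal{E}_n(\lambda) \;=\; \prod_{i=1}^n \frac{e^{\lambda \overline{X}_i}}{\mathbf{E}[e^{\lambda \overline{X}_i}\mid \mathcal{F}_{i-1}]}
\]
and the tilted measure $d\mathbf{Q}_\lambda = \mathcal{E}_n(\lambda)\,d\mathbf{P}$, with the tilt $\lambda$ chosen approximately equal to $x/B_n$ (the value that centers $\overline S_n$ at $xB_n$ under $\mathbf{Q}_\lambda$).

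The heart of the argument is a Taylor expansion of $\Psi_i(\lambda)=\log \mathbf{E}[e^{\lambda \overline X_i}\mid \mathcal{F}_{i-1}]$,
\[
\Psi_i(\lambda) \;=\; \tfrac12 \lambda^2\, \mathbf{E}[\overline X_i^2 \mid \mathcal{F}_{i-1}] + R_i(\lambda),
\]
where, since $|\lambda \overline X_i|\leq \lambda\gamma_n B_n = o(1)$ in our range $x=o(\gamma_n^{-1})$, the standard cumulant bound yields $|R_i(\lambda)|\leq c_\rho \lambda^{2+\rho}\,\mathbf{E}[|\overline X_i|^{2+\rho}\mid \mathcal{F}_{i-1}]$ for $\rho\in(0,1]$. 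This is where condition (A4) is essential: its pointwise estimate $\mathbf{E}[|\overline X_i|^{2+\rho}\mid \mathcal{F}_{i-1}]\leq (\gamma_nB_n)^\rho\,\mathbf{E}[\overline X_i^2\mid \mathcal{F}_{i-1}]$ allows one to sum remainders against the conditional variance and apply (A1), yielding $\sum_i R_i(\lambda) = O(x^{2+\rho}\varepsilon_n^\rho)$, which produces the dominant exponential factor in (\ref{t1ie1}); here I also use $\varepsilon_n=O(\gamma_n)$ as noted after condition (A4). The approximation $\langle S\rangle_n \approx B_n^2$ from (A1) then contributes the $x^2\delta_n^2$ summand.

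Next I would compute the first two conditional moments of $\overline X_i$ under $\mathbf{Q}_\lambda$: $\mathbf{E}_{\mathbf{Q}_\lambda}[\overline X_i\mid\mathcal{F}_{i-1}]=\Psi_i'(\lambda)\approx \lambda\,\mathbf{E}[\overline X_i^2\mid\mathcal{F}_{i-1}]$ and $\mathrm{Var}_{\mathbf{Q}_\lambda}(\overline X_i\mid\mathcal{F}_{i-1})=\Psi_i''(\lambda)\approx \mathbf{E}[\overline X_i^2\mid\mathcal{F}_{i-1}]$, with errors controlled by the same $\gamma_n^\rho$-factor. Together with (A1) this shows that under $\mathbf{Q}_\lambda$, $\overline S_n/B_n$ concentrates around $\lambda B_n\approx x$ and $[\overline S]_n/B_n^2$ concentrates around $1$. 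Applying a Berry--Esseen bound for the centered martingale $(\overline S_n - \lambda\langle \overline S\rangle_n)/B_n$ under $\mathbf{Q}_\lambda$ (analogous to (\ref{hauslerb})), together with a Chebyshev estimate for $[\overline S]_n/B_n^2$, yields
\[
\mathbf{Q}_\lambda(W_n\geq x)\;\geq\; \tfrac12 - c_\rho\bigl(x^\rho\gamma_n^\rho+\gamma_n^\rho+\delta_n\bigr).
\]
Inverting the change of measure via $\mathbf{P}(W_n\geq x)=\mathbf{E}_{\mathbf{Q}_\lambda}\!\bigl[\mathcal{E}_n(\lambda)^{-1}\mathbf{1}_{\{W_n\geq x\}}\bigr]$, together with $\mathcal{E}_n(\lambda)^{-1}\approx\exp\bigl(-\lambda S_n+\sum_i\Psi_i(\lambda)\bigr)\approx e^{-x^2/2}$ on the concentration event, produces the claimed lower bound after comparing with $1-\Phi(x)\sim e^{-x^2/2}/(x\sqrt{2\pi})$ for $x$ large. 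The left tail is handled symmetrically by applying the argument to $(-X_i)_{i\geq 1}$, which again satisfies (A1), (A2), (A4).

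The main technical obstacle is the delicate bookkeeping needed to separate the pure moderate-deviation factor $x^{2+\rho}\varepsilon_n^\rho$ from the lower-order corrections $(1+x)(x^\rho\gamma_n^\rho+\gamma_n^\rho+\delta_n)$, and especially to recover the extra logarithm $\gamma_n|\ln\gamma_n|$ in the boundary case $\rho=1$ of (\ref{t1ie12}). This logarithm appears because the cumulant-remainder estimate is critically on the edge of summability when $\rho=1$, so a dyadic decomposition of the tilt parameter (or equivalently an integration-by-parts step inside the Berry--Esseen bound) is needed to trade a factor $\gamma_n$ for the cost $|\ln\gamma_n|$, replacing the clean $\gamma_n^\rho$ of the $\rho<1$ case.
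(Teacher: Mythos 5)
Your overall strategy (exponential change of measure, Berry--Esseen control of the tilted martingale, comparison with the normal tail) is the same family of argument the paper uses, but two of your concrete steps would fail as written, and a third key ingredient is assumed rather than obtained.

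First, the truncation step. You truncate at $a=\gamma_nB_n$ and propose to absorb $\mathbf{P}(\overline X_i\neq X_i\ \text{for some}\ i)=O(\gamma_n^\rho)$ additively into the $(1+x)\gamma_n^\rho$ term of the exponent. This cannot work for a \emph{lower} bound at moderate-deviation scale: the target quantity is $(1-\Phi(x))e^{-c(\cdots)}\asymp e^{-x^2/2}/x$, which for $x$ of order, say, $\gamma_n^{-1/2}$ is astronomically smaller than $\gamma_n^\rho$, so subtracting a union-bound error of size $\gamma_n^\rho$ under $\mathbf{P}$ destroys the bound. Any truncation error must be estimated under the tilted measure (where the target is of order $1/x$, not $e^{-x^2/2}/x$), or avoided altogether. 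The paper avoids it by tilting with $\zeta_i(\lambda)=\lambda\xi_i-\lambda^2\xi_i^2/2$ rather than $\lambda\overline X_i$: the quadratic correction makes $e^{\zeta_i(\lambda)}\leq e^{1/2}$ without any truncation, and simultaneously converts the self-normalized event into $\{\sum_i\zeta_i(\lambda)\geq x^2/2\}$ via $\lambda M_n-\tfrac12\lambda^2[M]_n\geq\tfrac12 x^2\Rightarrow M_n\geq x\sqrt{[M]_n}$, so the joint behavior of $(M_n,[M]_n)$ never has to be tracked separately in the lower bound. This is the ``new choice of density'' the paper emphasizes, and it is doing real work that your classical tilt does not replicate.

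Second, the inversion of the change of measure. Your combination ``$\mathbf{Q}_\lambda(W_n\geq x)\geq\tfrac12-o(1)$'' with ``$\mathcal{E}_n(\lambda)^{-1}\approx e^{-x^2/2}$ on the event'' gives a lower bound of order $e^{-x^2/2}$, which exceeds $1-\Phi(x)\sim e^{-x^2/2}/(x\sqrt{2\pi})$ by a factor of $x$ and therefore cannot be what the computation actually yields; on the event $\{W_n\geq x\}$ the density $\mathcal{E}_n(\lambda)^{-1}=e^{-\lambda S_n+\sum_i\Psi_i(\lambda)}$ has no useful uniform lower bound (it decays as $S_n$ grows), so one must evaluate the Laplace-type integral $\int_0^\infty e^{-y}\,dF_n(y)$ with $F_n$ the tilted law of the centered exponent, compare it with $\int_0^\infty e^{-y}\,d\Phi(y/\overline\lambda)=e^{\overline\lambda^2/2}(1-\Phi(\overline\lambda))$ via integration by parts, and pay twice the Berry--Esseen error. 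This is precisely where the factor $1/(x\sqrt{2\pi})$ and the multiplicative error $(1+x)\cdot(\text{BE rate})$ come from, and it also forces a separate argument (a small-window estimate $\mathbf{P}_{\overline\lambda}(0\leq Y_n(\overline\lambda)\leq\overline\lambda K\tau)\gtrsim\tau$) in the regime where the Berry--Esseen error dominates $1/\overline\lambda$. Finally, the Berry--Esseen bound you invoke for the tilted martingale with the specific rate $x^\rho\gamma_n^\rho+\gamma_n^\rho+\delta_n$ (and $\gamma_n|\ln\gamma_n|$ when $\rho=1$) is not ``analogous to (\ref{hauslerb})'' --- Haeusler's rate is $\varepsilon_n^{\rho/(3+\rho)}$, which is strictly worse; the sharper rate under (A4) is exactly Proposition \ref{lem3.4}, the paper's main technical input, and it needs its own proof. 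As it stands your sketch assumes the hardest estimate and misfires on the two steps surrounding it.
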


For any sequence of positive numbers $(\alpha_n)_{n\geq 1}$  denote
 \begin{equation}
 \widehat{\alpha}_n(x, \rho) =  \frac{ \alpha_n^{ \rho(2-\rho)/4 } }{ 1+ x  ^{  \rho(2+\rho)/4 }}.
 \end{equation}
\begin{thm}\label{th2}
Assume that conditions (A1),  (A2), and (A4) are satisfied.
\begin{description}
  \item[\textbf{[i]}] If $\rho \in (0, 1)$, then for all $0 \leq x =o(\gamma_n^{-1})$,
   \begin{equation*}
\frac{\mathbf{P}(W_n \geq x)}{1-\Phi \left( x\right)}\leq  \exp\bigg\{  c_{\rho  } \bigg( x^{2+\rho}  \varepsilon_n^\rho+ x^2 \delta_n^2 +(1+x)\Big(  x^\rho \gamma_n^\rho +   \gamma_n^\rho      +  \delta_n +  \widehat{\varepsilon}_n(x, \rho) \Big) \bigg) \bigg \}.
\end{equation*}

\item[\textbf{[ii]}] If $\rho =1,  $  then for all $0 \leq x =o(\gamma_n^{-1})$,
\begin{eqnarray*}
\frac{\mathbf{P}(W_n \geq x)}{1-\Phi \left( x\right)}  \leq \exp\bigg\{  c \,  \bigg( x^{3}  \varepsilon_n + x^2 \delta_n^2+ (1+x)\Big(x \gamma_n+  \gamma_n|\ln \gamma_n|     +  \delta_n  + \widehat{\varepsilon}_n(x, 1) \Big) \bigg) \bigg \} .
\end{eqnarray*}
\end{description}
 Moreover, the two equalities above remain valid when $\frac{\mathbf{P}(W_n \geq x)}{1-\Phi \left( x\right)}$ is replaced by $\frac{\mathbf{P}(W_n \leq -x)}{ \Phi \left( -x\right)}$.
\end{thm}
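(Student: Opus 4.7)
The plan is to adapt the strategy of the lower bound proof of Theorem \ref{th1} and reverse each one-sided estimate. The central machinery is an Esscher-type exponential change of measure applied to a truncated martingale. I would fix a tilting parameter $\lambda\asymp x/B_n$ and a truncation level $y\asymp(x\gamma_n)^{-1}$, and set $\overline{X}_i=X_i\mathbf{1}_{\{|X_i|\leq yB_n\}}$, $\widetilde{X}_i=\overline{X}_i-\mathbf{E}[\overline{X}_i\mid\mathcal{F}_{i-1}]$. Condition (A2) controls $\mathbf{P}(\max_i|X_i|>yB_n)$ by Markov's inequality, so up to errors absorbable in the final exponent one may work entirely with $\widetilde{S}_n$ and $[\widetilde{S}]_n$ in place of $S_n$ and $[S]_n$. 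I then introduce the exponential martingale
\begin{equation*}
Z_k(\lambda)=\prod_{i=1}^k\frac{e^{\lambda\widetilde{X}_i}}{\mathbf{E}[e^{\lambda\widetilde{X}_i}\mid\mathcal{F}_{i-1}]}
\end{equation*}
and the tilted measure $d\mathbf{P}_\lambda=Z_n(\lambda)\,d\mathbf{P}$, which gives
\begin{equation*}
\mathbf{P}(W_n\geq x)=\mathbf{E}_\lambda\bigl[Z_n(\lambda)^{-1}\mathbf{1}_{\{W_n\geq x\}}\bigr].
\end{equation*}

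Next, I would write $\log Z_n(\lambda)=\lambda\widetilde{S}_n-\Psi_n(\lambda)$ with $\Psi_n(\lambda)=\sum_i\log\mathbf{E}[e^{\lambda\widetilde{X}_i}\mid\mathcal{F}_{i-1}]$ and perform a third-order Taylor expansion of the conditional cumulant generating function. Using (A1), (A2), (A4), and the preparatory lemmas of Section \ref{sec3}, this shows $\Psi_n(\lambda)=\tfrac12\lambda^2B_n^2+R_n$ with $R_n$ of order $\lambda^2\delta_n^2B_n^2+\lambda^{2+\rho}\varepsilon_n^\rho B_n^{2+\rho}$, with an extra logarithmic factor in the borderline case $\rho=1$ coming from the truncation of third moments, matching exactly the $\gamma_n|\ln\gamma_n|$ term already present in \eqref{t1ie12}. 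Substituting $\lambda\asymp x/B_n$ produces the leading exponent $c_\rho\bigl(x^{2+\rho}\varepsilon_n^\rho+x^2\delta_n^2+(1+x)(x^\rho\gamma_n^\rho+\gamma_n^\rho+\delta_n)\bigr)$ displayed in both parts of the theorem.

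To close the upper bound one must further show that $\mathbf{P}_\lambda(W_n\geq x)\geq c>0$ uniformly, which after inverting the tilt upgrades the exponent bound into the full upper estimate. On the tilted side the mean of $\widetilde{S}_n$ is close to $\lambda B_n^2\asymp xB_n$, so $\{W_n\geq x\}$ becomes a typical event that can be treated by a central limit theorem for the recentered martingale under $\mathbf{P}_\lambda$. This is exactly the step producing $\widehat{\varepsilon}_n(x,\rho)$: a Berry--Esseen-type rate for the tilted martingale, analogous to Haeusler's bound \eqref{hauslerb} but with moments controlled under $\mathbf{P}_\lambda$ in terms of $x$ and $\varepsilon_n$, yields a correction of precisely the form $\varepsilon_n^{\rho(2-\rho)/4}/(1+x^{\rho(2+\rho)/4})$ after optimization of the intermediate parameters.

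The main obstacle, in my view, is the passage from $[\widetilde{S}]_n$ to $\langle\widetilde{S}\rangle_n$ inside the self-normalization under $\mathbf{P}_\lambda$: for the lower bound of Theorem \ref{th1} one only needs a one-sided deviation of $[\widetilde{S}]_n-\langle\widetilde{S}\rangle_n$ on a set carrying enough $Z_n(\lambda)$-mass, whereas for the upper bound one requires a sharp two-sided concentration that must exploit (A4) genuinely rather than the weaker (A3) available in Theorem \ref{th3.1}. It is this two-sided control, combined with the Berry--Esseen step on the tilted side, that produces the additional $\widehat{\varepsilon}_n(x,\rho)$ correction absent from the lower bound and is the most delicate part of the argument.
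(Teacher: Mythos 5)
There are genuine gaps here, and the route you sketch diverges from the paper's proof at the points that carry the real difficulty. First, the truncation step does not close: bounding $\mathbf{P}(\max_i|X_i|>yB_n)$ by Markov's inequality with $y\asymp(x\gamma_n)^{-1}$ gives an \emph{additive} error that is only polynomially small in $\varepsilon_n$ and $x\gamma_n$, whereas the target bound $(1-\Phi(x))\exp\{\cdots\}$ is of order $e^{-x^2/2}$; for $x$ near the top of the range $o(\gamma_n^{-1})$ this error swamps the bound and cannot be ``absorbed in the final exponent.'' The paper avoids truncation altogether by tilting with $\zeta_i(\lambda)=\lambda\xi_i-\lambda^2\xi_i^2/2$, which is bounded above and algebraically matched to the self-normalized event. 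Second, the step ``show $\mathbf{P}_\lambda(W_n\geq x)\geq c$ and invert the tilt'' is the lower-bound mechanism of Theorem \ref{th1}; it cannot yield an upper bound. For the upper bound one needs an upper estimate of $\mathbf{E}_\lambda[e^{-Y_n(\lambda)}\mathbf{1}_{\{\cdots\}}]$, and before that one must convert $\{M_n\geq x\sqrt{[M]_n}\}$ into an event about $Y_n(\lambda)$ without the favorable inclusion $\{M_n\geq x\sqrt{[M]_n}\}\supseteq\{\lambda M_n-\lambda^2[M]_n/2\geq x^2/2\}$ that serves the lower bound. This forces the decomposition that is the actual content of the paper's proof: split according to whether $|[M]_n-\langle M\rangle_n|$ exceeds $\delta_n+1/(2x)$ as in (\ref{twoterm}); on the good set linearize the square root via $\sqrt{1+y}\geq 1+y/2-y^2/2$ and split again at level $(x\varepsilon_n)^{\rho/2}$ into $I_1$ and $I_2$; on the bad set use the three regions $\mathcal{E}_1,\mathcal{E}_2,\mathcal{E}_3$, the last requiring the self-normalized exponential inequality of Lemma \ref{slemms} and the first a second change of measure built from $\widetilde{\zeta}_i(x)=x\xi_i-x^2\xi_i^2/8$. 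You correctly identify this two-sided control of $[M]_n-\langle M\rangle_n$ as the main obstacle, but you supply none of it.

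Third, you misattribute the origin of $\widehat{\varepsilon}_n(x,\rho)$: it does not come from the Berry--Esseen rate for the tilted martingale. Proposition \ref{lem3.4} contributes the terms $x^\rho\gamma_n^\rho+\gamma_n^\rho+\delta_n$ (resp.\ $x\gamma_n+\gamma_n|\ln\gamma_n|+\delta_n$ for $\rho=1$), which already appear in the lower bound of Theorem \ref{th1}. The new term $\widehat{\varepsilon}_n(x,\rho)$ arises from Markov's inequality at level $(x\varepsilon_n)^{\rho/2}$ applied to the $(2+\rho)/2$-th moment of $[M]_n-\langle M(x)\rangle_n$ under $\mathbf{P}_x$ (von Bahr--Esseen), producing exactly $\varepsilon_n^{\rho}/(x\varepsilon_n)^{\rho(2+\rho)/4}=\varepsilon_n^{\rho(2-\rho)/4}x^{-\rho(2+\rho)/4}$, together with a matching contribution from the region $[M]_n>16$ obtained by interpolating Lemma \ref{slemms} with Lemma \ref{slemma58}. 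Without these computations the exponents $\rho(2-\rho)/4$ and $\rho(2+\rho)/4$ are unexplained, so the proposal does not establish the stated bound.
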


Combining Theorems \ref{th1} and \ref{th2}, we obtain  the following    Cram\'{e}r type moderate deviation expansion for self-normalized martingales under conditions (A1),  (A2), and (A4),  which
 is stronger than the expansion in  Theorem \ref{th3.1}  since the term $\varepsilon_n^{\rho/(3+\rho)}$ therein is improved
 to a smaller one. In what follows, $\theta $  stands for values satisfying $\left| \theta  \right| \leq 1$.

\begin{thm}\label{rezultCRMD}
Assume that conditions (A1),  (A2), and (A4) are satisfied.
\begin{description}
\item[\textbf{[i]}] If $\rho \in (0, 1)$, then for all $0\leq x =o(\gamma_n^{-1})$,
\begin{eqnarray*}
\frac{\mathbf{P}(W_n \geq x)}{1-\Phi \left( x\right)}  =  \exp\bigg\{ \theta c_{\rho} \bigg( x^{2+\rho}  \varepsilon_n^\rho+ x^2 \delta_n^2 +(1+x)\Big(x^\rho \gamma_n^\rho + \gamma_n ^\rho   + \delta_n +  \widehat{\varepsilon}_n(x, \rho)\Big) \bigg) \bigg \} .
\end{eqnarray*}
\item[\textbf{[ii]}] If $\rho =1$, then for all $0\leq x =o(\gamma_n^{-1})$,
\begin{eqnarray*}
 \frac{\mathbf{P}(W_n \geq x)}{1-\Phi \left( x\right)}  = \exp\bigg\{ \theta c  \bigg( x^{3}  \varepsilon_n + x^2 \delta_n^2+(1+x)\Big( x  \gamma_n  + \gamma_n |\ln \gamma_n |    + \delta_n+ \widehat{\varepsilon}_n(x, 1)  \Big) \bigg) \bigg \} .
\end{eqnarray*}
\end{description}
 Moreover,  the two equalities above remain valid when $\frac{\mathbf{P}(W_n \geq x)}{1-\Phi \left( x\right)}$ is replaced  by $\frac{\mathbf{P}(W_n \leq -x)}{ \Phi \left( -x\right)}$.
\end{thm}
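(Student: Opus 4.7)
The plan is to deduce Theorem \ref{rezultCRMD} directly as a corollary of Theorems \ref{th1} and \ref{th2}, which are already stated and provide matching one-sided bounds. In each regime $\rho\in(0,1)$ and $\rho=1$, Theorem \ref{th1} supplies a lower bound on $\mathbf{P}(W_n\geq x)/(1-\Phi(x))$ of the form $\exp\{-c^{(1)}_\rho R_n^-(x)\}$, while Theorem \ref{th2} supplies the matching upper bound $\exp\{c^{(2)}_\rho R_n^+(x)\}$; the only discrepancy between $R_n^-$ and $R_n^+$ is the extra summand $(1+x)\widehat{\varepsilon}_n(x,\rho)$ in the latter.

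The first step is to set $c_\rho:=\max\{c^{(1)}_\rho,c^{(2)}_\rho\}$. Since $R_n^-(x)\leq R_n^+(x)$, the lower bound from Theorem \ref{th1} can be weakened to $\exp\{-c_\rho R_n^+(x)\}$ at no cost, and combined with the upper bound of Theorem \ref{th2} this sandwiches the ratio between $\exp\{\pm c_\rho R_n^+(x)\}$. That is precisely the meaning of $\exp\{\theta c_\rho(\cdots)\}$ with $|\theta|\leq 1$ in the statement, so parts [i] and [ii] follow at once in the admissible range $0\leq x=o(\gamma_n^{-1})$ inherited from both preceding theorems.

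For the lower-tail version $\mathbf{P}(W_n\leq -x)/\Phi(-x)$, I would invoke the standard device of replacing $(X_i)$ by $(-X_i)$. The sequence $(-X_i,\mathcal{F}_i)_{i\geq 1}$ is again a martingale difference sequence; conditions (A1), (A2), (A4) are preserved with the same constants $\delta_n$, $\varepsilon_n$, $\gamma_n$, since they involve only $X_i^2$, $|X_i|^{2+\rho}$ and their conditional expectations, which are invariant under sign change. The squared variance satisfies $[-S]_n=[S]_n$, so the self-normalized sum for $(-X_i)$ equals $-W_n$, and the upper-tail expansion applied to the transformed martingale yields the lower-tail expansion for the original.

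Since the whole proof is essentially a packaging step, there is no substantive obstacle. The only items worth double-checking are that the two constants $c^{(1)}_\rho$ and $c^{(2)}_\rho$ can be merged into a single $\rho$-dependent constant without altering the form of the expansion, and that the $o(\gamma_n^{-1})$ range is common to both theorems; both are routine.
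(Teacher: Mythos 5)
Your proposal is correct and matches the paper's own treatment: the paper derives Theorem \ref{rezultCRMD} exactly by combining the lower bound of Theorem \ref{th1} with the upper bound of Theorem \ref{th2}, absorbing the discrepancy into the larger exponent via $|\theta|\leq 1$ (and the lower-tail statement is already carried by the corresponding clauses of those two theorems, themselves obtained by the $(-X_i)$ device you describe). No gaps.
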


Notice that condition (A4) implies condition (A2) with $\varepsilon_n=\gamma_n.$  Therefore, it follows from Theorem
 \ref{rezultCRMD} that:
\begin{corollary}\label{rezultCRMD2}
Assume that conditions (A1) and (A4) are satisfied.
\begin{description}
  \item[\textbf{[i]}] If $\rho \in (0, 1)$, then for all $0\leq x =o(\gamma_n^{-1})$,
\begin{eqnarray*}
\frac{\mathbf{P}(W_n \geq x)}{1-\Phi \left( x\right)}  =  \exp\bigg\{ \theta c_{\rho} \bigg( x^{2+\rho}  \gamma_n^\rho+ x^2 \delta_n^2 +(1+x)\Big(  \delta_n + \widehat{\gamma}_n(x, \rho) \Big) \bigg) \bigg \} .
\end{eqnarray*}
 \item[\textbf{[ii]}] If $\rho =1$, then for all $0\leq x =o(\gamma_n^{-1})$,
\begin{eqnarray*}
\frac{\mathbf{P}(W_n \geq x)}{1-\Phi \left( x\right)}  = \exp\bigg\{ \theta c  \bigg( x^{3}  \gamma_n + x^2 \delta_n^2+(1+x)\Big(   \delta_n+ \gamma_n |\ln \gamma_n|  +\widehat{\gamma}_n(x, 1) \Big) \bigg) \bigg \} .
\end{eqnarray*}
\end{description}
 Moreover,   the two equalities above remain valid when $\frac{\mathbf{P}(W_n \geq x)}{1-\Phi \left( x\right)}$ is replaced  by $\frac{\mathbf{P}(W_n \leq -x)}{ \Phi \left( -x\right)}$.
\end{corollary}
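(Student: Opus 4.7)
The plan is to deduce Corollary \ref{rezultCRMD2} directly from Theorem \ref{rezultCRMD} by exploiting the observation, already recorded at the beginning of Section \ref{sec2}, that conditions (A1) and (A4) automatically supply condition (A2) with a slightly enlarged constant. Concretely, for $\rho\in(0,1]$ and $\gamma_n\leq(16/17)^{1/\rho}/4$ (which holds for all large $n$ once $\gamma_n\to 0$, and otherwise the statement is trivial by absorbing finitely many $n$ into the constant), (A1) and (A4) imply (A2) with $\varepsilon_n=(17/16)^{1/\rho}\gamma_n$. Hence $\varepsilon_n^\rho\leq (17/16)\gamma_n^\rho$ and $\widehat{\varepsilon}_n(x,\rho)\leq C_\rho \widehat{\gamma}_n(x,\rho)$, so all three hypotheses (A1), (A2), (A4) are in force and Theorem \ref{rezultCRMD} gives the two-sided expansion on the common range $0\leq x=o(\gamma_n^{-1})$.

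Next I would substitute $\varepsilon_n=O(1)\gamma_n$ into the exponent provided by Theorem \ref{rezultCRMD}[i]. The exponent becomes
\[
\theta c_\rho\Bigl(x^{2+\rho}\gamma_n^\rho + x^2\delta_n^2 + (1+x)\bigl(x^\rho\gamma_n^\rho + \gamma_n^\rho + \delta_n + \widehat{\gamma}_n(x,\rho)\bigr)\Bigr),
\]
which differs from the target expression only by the two redundant summands $(1+x)x^\rho\gamma_n^\rho$ and $(1+x)\gamma_n^\rho$. A case split on whether $x\geq 1$ or $x<1$ yields $(1+x)x^\rho\gamma_n^\rho\leq 2x^{2+\rho}\gamma_n^\rho + 2\gamma_n^\rho$, so the remaining task is to absorb $(1+x)\gamma_n^\rho$, up to an absolute constant, into $(1+x)\widehat{\gamma}_n(x,\rho)$. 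Unwrapping the definition of $\widehat{\gamma}_n$ reduces this to the inequality
\[
\gamma_n^{\rho(2+\rho)/4}\bigl(1+x^{\rho(2+\rho)/4}\bigr) \leq C_\rho,
\]
which is immediate since the exponent $\rho(2+\rho)/4$ is positive and $x\gamma_n=o(1)$ throughout the allowed range $x=o(\gamma_n^{-1})$.

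Part [ii] is handled in the same way. After substituting $\varepsilon_n=O(1)\gamma_n$ in Theorem \ref{rezultCRMD}[ii], the extra term $(1+x)x\gamma_n$ is split as before: for $x\geq 1$ it is bounded by $2x^3\gamma_n$, and for $x\leq 1$ by $2\gamma_n\leq 2(1+x)\gamma_n|\ln\gamma_n|$, using $|\ln\gamma_n|\geq 1$ for sufficiently small $\gamma_n$. The lower-tail version follows by the same argument, since Theorem \ref{rezultCRMD} already provides the symmetric statement for $\mathbf{P}(W_n\leq -x)/\Phi(-x)$.

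The only real work consists of the routine bookkeeping of absorbing constants; there is no genuine obstacle and no new probabilistic content beyond what Theorem \ref{rezultCRMD} already provides. The mildest subtlety to watch for is ensuring that the transition from (A4) to (A2) costs only a multiplicative constant both in the plain quantity $\varepsilon_n^\rho$ and in the composite quantity $\widehat{\varepsilon}_n(x,\rho)$, which is clear from the fractional exponents involved.
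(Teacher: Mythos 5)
Your proposal is correct and follows the paper's own route: the paper derives this corollary in one line from Theorem \ref{rezultCRMD} by noting that (A1) and (A4) yield (A2) with $\varepsilon_n=O(1)\gamma_n$, exactly as you do. Your extra bookkeeping — absorbing $(1+x)x^{\rho}\gamma_n^{\rho}$ into $x^{2+\rho}\gamma_n^{\rho}$ and $(1+x)\gamma_n^{\rho}$ into $(1+x)\widehat{\gamma}_n(x,\rho)$ via $\gamma_n^{\rho(2+\rho)/4}(1+x^{\rho(2+\rho)/4})\leq C_\rho$ on the range $x\gamma_n=o(1)$, and similarly $(1+x)x\gamma_n$ into $x^3\gamma_n+(1+x)\gamma_n|\ln\gamma_n|$ for $\rho=1$ — is exactly the (unstated) verification the paper's one-sentence deduction relies on.
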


From Theorem \ref{rezultCRMD}, we also obtain the following result about the equivalence to the normal tail.
\begin{corollary}\label{comore}
Assume  conditions (A1),  (A2), and (A4) with $\rho \in (0 , 1]$. Then
\begin{equation}\label{Cramer}
\frac{\mathbf{P}(W_n \geq x)}{1-\Phi \left( x\right)}=1+o(1) \ \ \ \ \ \
\textrm{and}  \ \ \  \ \ \
\frac{\mathbf{P}(W_n \leq -x)}{\Phi \left( -x\right) } =1+o(1)
\end{equation}
 uniformly for   $0\leq x = o (\, \min\{  \varepsilon_n^{-\rho/(2+\rho)} , \ \gamma_n^{- \rho/(1+\rho)}, \ \delta_n^{-1}  \})$ as $n \rightarrow \infty.$
\end{corollary}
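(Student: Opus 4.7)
The plan is to invoke Theorem \ref{rezultCRMD} directly and reduce the claim to a term-by-term verification that the exponent on the right hand side is $o(1)$. Writing the conclusion of that theorem as $\mathbf{P}(W_n\geq x)/(1-\Phi(x))=\exp\{\theta c_\rho R_n(x,\rho)\}$ for the explicit sum $R_n$ appearing there, it suffices to prove $R_n(x,\rho)=o(1)$ uniformly for $0\leq x=o(\min\{\varepsilon_n^{-\rho/(2+\rho)},\gamma_n^{-\rho/(1+\rho)},\delta_n^{-1}\})$; exponentiation then yields the displayed equivalence for the upper tail, and the analogous lower-tail statement in Theorem \ref{rezultCRMD} gives the second half.

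First I would dispose of the polynomial summands. The constraint $x=o(\varepsilon_n^{-\rho/(2+\rho)})$ directly kills $x^{2+\rho}\varepsilon_n^\rho$; the constraint $x=o(\delta_n^{-1})$ kills both $x^2\delta_n^2$ and $(1+x)\delta_n$; the constraint $x=o(\gamma_n^{-\rho/(1+\rho)})$ combined with $\gamma_n\to 0$ handles $(1+x)(x^\rho\gamma_n^\rho+\gamma_n^\rho)$, since $x^{1+\rho}\gamma_n^\rho=o(\gamma_n^{\rho^2/(1+\rho)})\to 0$. For the $\rho=1$ case, the extra summand $(1+x)\gamma_n|\ln\gamma_n|$ is controlled by $x\gamma_n|\ln\gamma_n|=o(\gamma_n^{1/2}|\ln\gamma_n|)=o(1)$, since any polynomial decay dominates a logarithmic factor.

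The only slightly delicate piece, and hence the step I expect to require the most care, is the summand $(1+x)\widehat{\varepsilon}_n(x,\rho)=(1+x)\varepsilon_n^{\rho(2-\rho)/4}/(1+x^{\rho(2+\rho)/4})$. I would split into cases. For $x$ in a bounded range this is trivially $o(1)$, because the first factor is bounded and $\varepsilon_n^{\rho(2-\rho)/4}\to 0$. For $x\to\infty$ within the permitted range, the expression is asymptotic to $x^{1-\rho(2+\rho)/4}\varepsilon_n^{\rho(2-\rho)/4}$; substituting the barrier $x\leq \varepsilon_n^{-\rho/(2+\rho)}o(1)$ (and noting that $1-\rho(2+\rho)/4\in[1/4,1)$ for $\rho\in(0,1]$, so the exponent is positive and the substitution is monotone) produces the bound $\varepsilon_n^{\rho^2/(2(2+\rho))}\cdot o(1)=o(1)$, where the power arises from the arithmetic $-\frac{\rho}{2+\rho}(1-\frac{\rho(2+\rho)}{4})+\frac{\rho(2-\rho)}{4}=\frac{\rho^2}{2(2+\rho)}>0$.

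Once these four types of terms are checked, $R_n(x,\rho)\to 0$ uniformly on the prescribed range and the corollary follows. The main obstacle is not any single estimate but the bookkeeping of the competing constraints on $x$: one has to verify that the threshold $\varepsilon_n^{-\rho/(2+\rho)}$ is precisely what is needed to defeat $\widehat{\varepsilon}_n$, and that this threshold is sharper than, or at worst comparable to, the naive one $\varepsilon_n^{-1}$ coming from $x^{2+\rho}\varepsilon_n^\rho$, explaining why the three-way minimum in the allowed range of $x$ cannot be enlarged by dropping any of its entries.
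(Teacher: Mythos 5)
Your proposal is correct and follows exactly the route the paper intends: Corollary \ref{comore} is stated as an immediate consequence of Theorem \ref{rezultCRMD}, and your term-by-term verification that the exponent is uniformly $o(1)$ on the prescribed range --- including the key computation $-\frac{\rho}{2+\rho}\bigl(1-\frac{\rho(2+\rho)}{4}\bigr)+\frac{\rho(2-\rho)}{4}=\frac{\rho^{2}}{2(2+\rho)}>0$ identifying $\varepsilon_n^{-\rho/(2+\rho)}$ as the threshold dictated by $\widehat{\varepsilon}_n$ --- is precisely the missing bookkeeping. One harmless slip: from $x=o(\gamma_n^{-\rho/(1+\rho)})$ one gets $x^{1+\rho}\gamma_n^{\rho}=o(1)$ but not the stronger rate $o(\gamma_n^{\rho^{2}/(1+\rho)})$ you state (that rate belongs to the term $x\gamma_n^{\rho}$); the conclusion is unaffected.
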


In the case of   i.i.d.\ random variables,  conditions (A1), (A2), and (A4) are satisfied with $\varepsilon_n , \gamma_n= O(1/\! \sqrt{n}\,)$  and $\delta_n=0$.
  Thus, the range $0\leq x = o (\, \min\{  \varepsilon_n^{-\rho/(2+\rho)} , \ \delta_n^{-1},  \ \gamma_n^{-\rho/(1+\rho)}  \})$ reduces to $0\leq x = o (n^{-\rho/(4+2\rho )}   ), n \rightarrow \infty,$ which is the best possible result such that (\ref{Cramer}) holds (see   \citep{S99}).  Moreover, from Theorem \ref{rezultCRMD}, we can get the  estimation of the rate of convergence in (\ref{Cramer});  for example, when $\rho = 1$ we have:

\begin{corollary}\label{comore-b}
Assume  conditions (A1),  (A2), and (A4) with $\rho = 1$, $\varepsilon_n,  \gamma_n, \delta_n= O(1/\! \sqrt{n}\,).$   Then,  for $x= x_0 n^{ \frac{1}{2} - a}$ with $0<a< \frac{4}{11}$ and $x_0 >0$ fixed, as $ n \rightarrow \infty,$
\begin{equation}\label{Cramer-comore-b}
\frac{\mathbf{P}(W_n \geq x)}{1-\Phi \left( x\right)}=\exp \Big\{ O(1)\frac { x^3} {\sqrt n}  \Big\}   \ \ \
\textrm{and}  \ \ \  \
\frac{\mathbf{P}(W_n \leq -x)}{\Phi \left( -x\right) } =\exp \Big \{  O(1) \frac { x^3} {\sqrt n} \Big\}.
\end{equation}
In particular,  for $x= x_0 n^{ \frac{1}{6} -  b }$ with $0<b<\frac{1}{33}$ and $x_0 >0$ fixed, as $ n \rightarrow \infty,$
\begin{equation}
\frac{\mathbf{P}(W_n \geq x)}{1-\Phi \left( x\right)}=1+ O\Big(\frac { x^3} {\sqrt n} \Big)    \ \ \
\textrm{and}  \ \ \  \
\frac{\mathbf{P}(W_n \leq -x)}{\Phi \left( -x\right) } =1+ O\Big( \frac { x^3} {\sqrt n} \Big) .
\end{equation}
\end{corollary}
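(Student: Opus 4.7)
The plan is to apply Theorem \ref{rezultCRMD}[ii] with $\rho=1$ and then carry out a term-by-term comparison inside the exponent. The hypotheses give (A1), (A2), (A4) with $\varepsilon_n,\gamma_n,\delta_n=O(n^{-1/2})$, so the range requirement $x=o(\gamma_n^{-1})$ becomes $x=o(\sqrt{n})$, which is satisfied for any $a>0$ in the first statement and for any $b<1/6$ (hence certainly for $b<1/33$) in the second.

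For the first display of \eqref{Cramer-comore-b}, I would substitute these rates and $x=x_0 n^{1/2-a}$ into the exponent of Theorem \ref{rezultCRMD}[ii] and check that each of its six contributions is $O(x^3/\sqrt{n})$. The term $x^3\varepsilon_n$ is exactly the target. The terms $x^2\delta_n^2$, $(1+x)x\gamma_n$, $(1+x)\delta_n$ and $(1+x)\gamma_n|\ln\gamma_n|$ are all dominated by $x^3/\sqrt{n}$ for any $a<1/2$, since $x\to\infty$ polynomially and $x^2/\sqrt{n}=o(x^3/\sqrt{n})$, while $\ln n\ll x^2$. The binding constraint arises from the term $(1+x)\widehat{\varepsilon}_n(x,1)$, which for large $x$ behaves like $x^{1/4}n^{-1/8}$; requiring $x^{1/4}n^{-1/8}\le C\,x^3/\sqrt{n}$ reduces to $x^{11/4}\ge c\,n^{3/8}$, equivalently $(1/2-a)\cdot(11/4)\ge 3/8$, i.e.\ $a\le 4/11$. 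The strict inequality $a<4/11$ then suffices.

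For the second display, with $x=x_0 n^{1/6-b}$ one has $x^3/\sqrt{n}=x_0^3 n^{-3b}\to 0$ for $b>0$, so once the exponent has been shown to be $O(x^3/\sqrt{n})$ the expansion $e^z=1+O(z)$ yields the $1+O(x^3/\sqrt{n})$ form. The same binding comparison $x^{11/4}\ge c\,n^{3/8}$, now with $x=x_0 n^{1/6-b}$, translates to $1/6-b\ge 3/22$, i.e.\ $b\le 1/33$; allowing strict inequality $b<1/33$ gives the stated range. The lower-tail versions follow identically from the second conclusion of Theorem \ref{rezultCRMD}[ii].

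The only real obstacle is \emph{arithmetic bookkeeping}: identifying that among the six exponent terms it is precisely the Berry--Esseen-like residual $(1+x)\widehat{\varepsilon}_n(x,1)$ that dictates both critical exponents $4/11$ and $1/33$, and verifying that all remaining contributions are indeed absorbed into $O(x^3/\sqrt{n})$ throughout the stated ranges. No further analytic input beyond Theorem \ref{rezultCRMD} is needed.
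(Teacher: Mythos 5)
Your proposal is correct and follows exactly the route the paper intends: the corollary is stated as an immediate consequence of Theorem \ref{rezultCRMD}[ii], and your term-by-term substitution of $\varepsilon_n,\gamma_n,\delta_n=O(n^{-1/2})$, with the identification of $(1+x)\widehat{\varepsilon}_n(x,1)\asymp x^{1/4}n^{-1/8}$ as the binding term yielding the thresholds $a\le 4/11$ and $b\le 1/33$, is precisely the required bookkeeping. No discrepancy with the paper's argument.
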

Notice that the rate  of convergence in (\ref{Cramer-comore-b})  coincides with that in (\ref{Cramer02}) for   i.i.d.\ random variables.

\section{ \textbf{Preliminary lemmas} }\label{sec3}
\setcounter{equation}{0}
 The proofs of  Theorems  \ref{th3.1}-\ref{rezultCRMD} are  based on a conjugate multiplicative martingale technique for changing the probability
measure which is similar to that of the transformation of \citep{E24}.
Our  approach is inspired by the earlier work of  \citep{GH00} on Cram\'{e}r moderate deviations
for standardized martingales, and by that of   \citep{S99},  \citep{JSW03},
who developed techniques for moderate deviations of self-normalized sums of independent random variables.
We extend these work by introducing a new choice of the density for the change of measure
and  refining the approaches in \citep{S99} and \citep{JSW03} to handle self-normalized martingales.
A key point of the proof is a new Berry-Esseen bound
for martingales under the changed measure, see Proposition  \ref{lem3.4} below.

Let $$ \xi_i = \frac{X_i}{ B_n}, \ \ \ \ i=1,...,n.$$ Then  $(\xi _i,\mathcal{F}_i)_{i=0,...,n}$  is also a sequence  of martingale differences.
Moreover,   for simplicity of notations, set
\begin{equation*}
M_k=\sum_{i=1}^k \xi_i,
\end{equation*}
\begin{equation*}
[M]_k=\sum_{i=1}^k \xi_i^2 \ \ \ \  \textrm{and} \ \ \ \   \langle M \rangle_k=\sum_{i=1}^k \mathbf{E} [ \xi_i^2  |  \mathcal{F}_{i-1} ], \quad k=1,...,n.
\end{equation*}
Thus
\begin{equation}\label{denow}
W_n= \frac{S_n}{ \sqrt{[ S]_n}} = \frac{ M_n}{\sqrt{[M]_n}}.
\end{equation}

For any real number $\lambda$,
consider the \emph{exponential multiplicative martingale} $Z(\lambda
)=(Z_k(\lambda ),\mathcal{F}_k)_{k=0,...,n},$ where
\[
Z_0(\lambda )=1,\quad    Z_k(\lambda )=\prod_{i=1}^k\frac{e^{\zeta_i(\lambda)}}{\mathbf{E}[e^{\zeta_i(\lambda)}|
\mathcal{F}_{i-1}]},\quad k=1,...,n  \label{C-1}
\]
with
$$ \zeta_i(\lambda)=\lambda \xi_i - \lambda^2 \xi_i^2/2.$$
Thus, for each
real number $\lambda$ and each $k=1,...,n,$ the random variable $Z_k(\lambda
) $ is nonnegative and $\mathbf{E} Z_k(\lambda )=1.$  The last
observation allows us to introduce   the \emph{conjugate probability measure} $\mathbf{P}_\lambda := \mathbf{P}_{\lambda,n}$ on $(\Omega ,%
\mathcal{F})$ defined by
\begin{equation}
d\mathbf{P}_\lambda =Z_n(\lambda )d\mathbf{P}.  \label{f21}
\end{equation}
Although $(M_k,\mathcal{F}_k)_{k=0,...,n}$ is a martingale under the measure $\mathbf{P},$ it is no longer a
martingale under the conjugate probability measure $\mathbf{P}_\lambda.$
To obtain a martingale under $\mathbf{P}_\lambda$ we have to  center the random variables $\zeta_i(\lambda)$.
Denote by $\mathbf{E}_{\lambda}$ the expectation with respect to $\mathbf{P}_{\lambda}$. Because $Z(\lambda )$ is a
 uniformly integrable martingale under $\mathbf{P}$, we have
\begin{eqnarray}
 \mathbf{E}_\lambda [\zeta] =\mathbf{E}[\zeta Z_i(\lambda )]
\end{eqnarray}
and
\begin{eqnarray}\label{sfghkm423}
\mathbf{E}_{\lambda}[\zeta|
\mathcal{F}_{i-1}]=   \frac{ \mathbf{E}[\zeta e^{\zeta_i(\lambda)}|\mathcal{F}_{i-1}]}{\mathbf{E}[e^{\zeta_i(\lambda)} |\mathcal{F}_{i-1}] }
\end{eqnarray}
for any $\mathcal{F}_i$-measurable
random variable $\zeta$  that is integrable with respect to $\mathcal{F}_i.$ Set
\[
b_i(\lambda)=\mathbf{E}_{\lambda}[\zeta_i(\lambda) |\mathcal{F}_{i-1}],\quad i=1,\dots,n,
\]
\[
\eta_i(\lambda)= \zeta_i(\lambda) - b_i(\lambda), \quad i=1,\dots,n,
\]
and
\begin{equation}\label{f23}
Y_k(\lambda )=\sum_{i=1}^k\eta _i(\lambda ),\quad k=1,...,n.
\end{equation}
Then $Y(\lambda )=(Y_k(\lambda ),\mathcal{F}_k)_{k=0,...,n}$ is the \emph{conjugate martingale}.
The following semimartingale decomposition is well-known:
\begin{equation}
 \sum_{i=1}^k\zeta_i(\lambda)  =B_k(\lambda )+Y_k(\lambda ),\quad k=1,...,n, \label{xb}
\end{equation}
where $B(\lambda )=(B_k(\lambda ),\mathcal{F}_k)_{k=0,...,n}$ is the \emph{%
drift process} defined as
\[
B_k(\lambda )=\sum_{i=1}^kb_i(\lambda ),\quad k=1,...,n.
\]
By the relation between $\mathbf{E}$ and $\mathbf{E}_{\lambda}$ on $\mathcal{F}_i,$
we have
\begin{equation}\label{brads}
b_i(\lambda )=\frac{\mathbf{E}[\zeta_i(\lambda)e^{\zeta_i(\lambda)}|\mathcal{F}_{i-1}]}{\mathbf{E}[e^{\zeta_i(\lambda)}|\mathcal{F}%
_{i-1}]},\quad i=1,...,n.
\end{equation}
It is easy to compute the conditional variance of  the conjugate
martingale $Y(\lambda )$ under the measure $\mathbf{P}_\lambda $, for $k=0,...,n$,
\begin{eqnarray}
\left\langle Y(\lambda )\right\rangle _k &=& \sum_{i=1}^{k}\mathbf{E}_{\lambda}[\eta_{i}(\lambda)^2 | \mathcal{F}_{i-1}] \nonumber\\
& = &  \sum_{i=1}^{k}\mathbf{E}_{\lambda}[(\zeta_i(\lambda) - b_i(\lambda))^2  | \mathcal{F}_{i-1}] \nonumber\\
&= &\sum_{i=1}^k\left( \frac{\mathbf{E}[\zeta_i^2(\lambda) e^{\zeta_i(\lambda)}|\mathcal{F}_{i-1}]}{\mathbf{E}[e^{\zeta_i(\lambda)}|\mathcal{F}%
_{i-1}]}-\frac{\mathbf{E}[\zeta_i(\lambda)e^{\zeta_i(\lambda) }|\mathcal{F}_{i-1}]^2}{\mathbf{E}[e^{\zeta_i(\lambda)}|\mathcal{F}_{i-1}]^2}\right). \label{f24}
\end{eqnarray}

In the sequel, we  give the upper and lower bounds for $B_n(\lambda ).$ To this end, we need the following three useful
lemmas. The proof is similar to tat in  \citep{S99}  and \citep{JSW03}. Set
 \[
\widetilde{\varepsilon}_\lambda = \lambda^2\mathbf{E}[ \xi_i^2\mathbf{1}_{\{ |\lambda \xi_i| > 1\}} |  \mathcal{F}_{i-1}] + \lambda^3 \mathbf{E}[\xi_i^3\mathbf{1}_{\{ |\lambda \xi_i| \leq 1\}} |\mathcal{F}_{i-1} ],\ \ \ \lambda\geq 0.
\]
If $\mathbf{E}[ |\xi_i|^{2+\rho} ] < \infty$ for  $ \rho \in [0, 1], $
then it is obvious that  $$\widetilde{\varepsilon}_\lambda \leq  \lambda^{2+\rho}  \mathbf{E}[ |\xi_i|^{2+\rho}  |  \mathcal{F}_{i-1}],\ \ \lambda \geq 0.$$

\begin{lemma}\label{lem001}
For all $\lambda > 0$ and  $ \tau\in [\frac18, 2] ,$  we have
\[
  \mathbf{E}[e^{ \lambda \xi_i -\tau \lambda^2 \xi_i^2 }|  \mathcal{F}_{i-1}] = 1+ (\frac12  -\tau)\lambda^2\mathbf{E}[\xi_i^2|  \mathcal{F}_{i-1}]+O(1)  \widetilde{\varepsilon}_\lambda ,
\]
  where $O(1)$ is bounded by an absolute constant.
\end{lemma}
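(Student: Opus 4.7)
The idea is to split the conditional expectation according to whether $|\lambda \xi_i| \leq 1$ or $|\lambda \xi_i| > 1$ and handle each region separately, showing that all error contributions can be absorbed in $\widetilde{\varepsilon}_\lambda$. On the event $\{|\lambda \xi_i| > 1\}$, I would use the ``completing the square'' inequality $\lambda \xi_i - \tau \lambda^2 \xi_i^2 \leq \frac{1}{4\tau}$, which together with $\tau \geq 1/8$ yields the uniform bound $e^{\lambda \xi_i - \tau \lambda^2 \xi_i^2} \leq e^{2}$. Markov's inequality then gives $\mathbf{P}(|\lambda \xi_i| > 1 \mid \mathcal{F}_{i-1}) \leq \lambda^2 \mathbf{E}[\xi_i^2 \mathbf{1}_{\{|\lambda \xi_i| > 1\}} \mid \mathcal{F}_{i-1}]$, so the total contribution of this region to $\mathbf{E}[e^{\lambda \xi_i - \tau \lambda^2 \xi_i^2} \mid \mathcal{F}_{i-1}]$ is $O(1)\widetilde{\varepsilon}_\lambda$.

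On the event $\{|\lambda \xi_i| \leq 1\}$, set $y = \lambda \xi_i - \tau \lambda^2 \xi_i^2$. Since $|y| \leq (1+\tau)|\lambda \xi_i| \leq 3|\lambda \xi_i| \leq 3$, a Taylor expansion of the exponential yields
\[
e^y = 1 + \lambda \xi_i + \bigl(\tfrac12 - \tau\bigr) \lambda^2 \xi_i^2 + R,
\]
where $R$ collects the $-\tau \lambda^3 \xi_i^3$ cross term from $y^2/2$ and all orders $k\ge 3$ in the exponential series. Because $\sum_{k\geq 3}(3|\lambda \xi_i|)^k/k! \leq C|\lambda \xi_i|^3$ on $\{|\lambda\xi_i|\leq 1\}$, one obtains $|R|\leq C\lambda^3|\xi_i|^3$, so the conditional expectation of $R \mathbf{1}_{\{|\lambda \xi_i| \leq 1\}}$ is of order $\lambda^3 \mathbf{E}[|\xi_i|^3 \mathbf{1}_{\{|\lambda \xi_i| \leq 1\}} \mid \mathcal{F}_{i-1}]$, which is a piece of $\widetilde{\varepsilon}_\lambda$.

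To recombine, I would write $\mathbf{1}_{\{|\lambda \xi_i| \leq 1\}} = 1 - \mathbf{1}_{\{|\lambda \xi_i| > 1\}}$ in the three leading terms. The constant contributes $1 - \mathbf{P}(|\lambda \xi_i| > 1 \mid \mathcal{F}_{i-1}) = 1 + O(1)\widetilde{\varepsilon}_\lambda$; the linear piece reduces via $\mathbf{E}[\xi_i \mid \mathcal{F}_{i-1}] = 0$ to $-\lambda \mathbf{E}[\xi_i \mathbf{1}_{\{|\lambda \xi_i| > 1\}} \mid \mathcal{F}_{i-1}]$, which is bounded by $\lambda^2 \mathbf{E}[\xi_i^2 \mathbf{1}_{\{|\lambda \xi_i| > 1\}} \mid \mathcal{F}_{i-1}]$ since $|\lambda \xi_i|>1$ on the event; and the quadratic piece supplies the main term $(\tfrac12 - \tau)\lambda^2 \mathbf{E}[\xi_i^2 \mid \mathcal{F}_{i-1}]$ up to a removed-indicator error of the same $\widetilde{\varepsilon}_\lambda$-type. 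Adding the $\{|\lambda\xi_i|>1\}$ contribution gives the claimed identity. The only mildly delicate point is to check that the implicit constant is uniform in $\tau\in[\tfrac18,2]$, which follows directly from the two endpoint uses of $\tau$ (in $1/(4\tau)$ and in $(1+\tau)$); no truly hard step is involved beyond careful bookkeeping of the Taylor remainder and of the three small correction terms.
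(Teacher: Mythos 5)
Your proof is correct and follows essentially the same route as the paper's: the paper does not write out this proof but defers to Shao (1999) and Jing--Shao--Wang (2003), whose argument is exactly your truncation at $|\lambda\xi_i|\leq 1$, the uniform bound $e^{\lambda\xi_i-\tau\lambda^2\xi_i^2}\leq e^{1/(4\tau)}\leq e^2$ plus Chebyshev on the complementary event, and a second-order Taylor expansion with remainder of order $\lambda^3|\xi_i|^3$ on the truncated part. The only cosmetic point is that you (rightly) control the remainder by $\lambda^3\mathbf{E}[|\xi_i|^3\mathbf{1}_{\{|\lambda\xi_i|\leq 1\}}\mid\mathcal{F}_{i-1}]$ with an absolute value, which is the intended reading of $\widetilde{\varepsilon}_\lambda$.
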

\begin{lemma}
\label{lem002}
   For all $\lambda > 0,$   we have
\begin{eqnarray*}
\mathbf{E}[e^{\zeta_i(\lambda)  } |  \mathcal{F}_{i-1}]&=& 1 +   O(1)  \widetilde{\varepsilon}_\lambda,\\
\mathbf{E}[\zeta_i(\lambda)   e^{\zeta_i(\lambda)  } |  \mathcal{F}_{i-1}]&=&  \frac12 \lambda^2 \mathbf{E}[ \xi_i^2|  \mathcal{F}_{i-1}] +  O(1)  \widetilde{\varepsilon}_\lambda ,\\
\mathbf{E}[\zeta_i ^2 (\lambda) e^{\zeta_i(\lambda)  } |  \mathcal{F}_{i-1}]&=&    \lambda^2 \mathbf{E}[ \xi_i^2|  \mathcal{F}_{i-1}] +  O(1)  \widetilde{\varepsilon}_\lambda , \\
\mathbf{E}[|\zeta_i(\lambda)|^3  e^{\zeta_i(\lambda)  }  |  \mathcal{F}_{i-1}]&=&   O(1)  \widetilde{\varepsilon}_\lambda,   \\
(\mathbf{E}[\zeta_i(\lambda)   e^{\zeta_i(\lambda)  } |  \mathcal{F}_{i-1}])^2&=&  O(1)  \widetilde{\varepsilon}_\lambda,
\end{eqnarray*}
where $O(1)$ is bounded by an absolute constant.
\end{lemma}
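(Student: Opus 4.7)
The plan hinges on the identity $\zeta_i(\lambda)=\tfrac12-\tfrac12(\lambda\xi_i-1)^2\le\tfrac12$, which immediately yields the uniform bound $e^{\zeta_i(\lambda)}\le e^{1/2}$ and, more generally, a uniform bound on $|\zeta_i(\lambda)|^k e^{\zeta_i(\lambda)}$ on $(-\infty,\tfrac12]$ for every $k\ge 0$. With this in hand I would split each of the five conditional expectations in the statement along the partition $\{|\lambda\xi_i|\le 1\}\cup\{|\lambda\xi_i|>1\}$ and treat the two pieces separately. On the large-deviation event, the uniform bound reduces things to the conditional probability $\mathbf{P}(|\lambda\xi_i|>1\mid\mathcal{F}_{i-1})\le\lambda^2\mathbf{E}[\xi_i^2\mathbf{1}_{\{|\lambda\xi_i|>1\}}\mid\mathcal{F}_{i-1}]$, which is exactly the first piece of $\widetilde{\varepsilon}_\lambda$. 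On the small event we have $|\zeta_i(\lambda)|\le 3/2$, so the Taylor expansion $e^{\zeta_i}=1+\zeta_i+\tfrac12\zeta_i^2+O(|\zeta_i|^3)$ is valid with absolute constants and its cubic remainder is bounded pointwise by $C\lambda^3|\xi_i|^3$, which lands inside the second piece of $\widetilde{\varepsilon}_\lambda$.

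Given this split, the first identity is an immediate corollary of Lemma \ref{lem001} applied with $\tau=1/2$, since the coefficient $(\tfrac12-\tau)$ then vanishes. For identities (ii) and (iii) I would substitute $\zeta_i=\lambda\xi_i-\lambda^2\xi_i^2/2$ into the expansions $\zeta_i e^{\zeta_i}=\zeta_i+\zeta_i^2+O(|\zeta_i|^3)$ and $\zeta_i^2 e^{\zeta_i}=\zeta_i^2+O(|\zeta_i|^3)$ and invoke $\mathbf{E}[\xi_i\mid\mathcal{F}_{i-1}]=0$ to cancel the purely linear-in-$\xi_i$ contributions; the truncation bias $\mathbf{E}[\xi_i\mathbf{1}_{\{|\lambda\xi_i|>1\}}\mid\mathcal{F}_{i-1}]$ produced by inserting the indicator is swept away via $|\xi_i|\le\lambda\xi_i^2$ on $\{|\lambda\xi_i|>1\}$, which again contributes only $O(\widetilde{\varepsilon}_\lambda)$, leaving the main terms $\tfrac{\lambda^2}{2}\mathbf{E}[\xi_i^2\mid\mathcal{F}_{i-1}]$ and $\lambda^2\mathbf{E}[\xi_i^2\mid\mathcal{F}_{i-1}]$ respectively. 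Identity (iv) is read off the pointwise bound $|\zeta_i|^3\le C\lambda^3|\xi_i|^3$ on $\{|\lambda\xi_i|\le 1\}$ together with the uniform bound on the complement.

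The delicate step I expect to be the main obstacle is the fifth identity. Squaring (ii) naively produces a residual term of order $\tfrac{\lambda^4}{4}(\mathbf{E}[\xi_i^2\mid\mathcal{F}_{i-1}])^2$, which does not manifestly fit inside $\widetilde{\varepsilon}_\lambda$ under the moment hypotheses alone, so I would not prove it along that route. Instead I would exploit $\mathbf{E}[\xi_i\mid\mathcal{F}_{i-1}]=0$ a second time by rewriting $\mathbf{E}[\zeta_i(\lambda) e^{\zeta_i(\lambda)}\mid\mathcal{F}_{i-1}]=\lambda\mathbf{E}[\xi_i(e^{\zeta_i(\lambda)}-1)\mid\mathcal{F}_{i-1}]-\tfrac{\lambda^2}{2}\mathbf{E}[\xi_i^2 e^{\zeta_i(\lambda)}\mid\mathcal{F}_{i-1}]$, and then control each summand by Cauchy--Schwarz together with the sharpened pointwise inequality $|\zeta_i(\lambda)|\,e^{\zeta_i(\lambda)}\le C(\lambda|\xi_i|\wedge 1)$. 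This is the technical crux of the lemma: it is the one place where both the centering of $\xi_i$ and the concave ceiling $\zeta_i(\lambda)\le 1/2$ must be used together, and the same manoeuvre underlies the corresponding estimates in \citep{S99} and \citep{JSW03} for self-normalized sums of independent random variables.
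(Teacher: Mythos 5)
Your overall scheme for identities (i)--(iv) --- the ceiling $\zeta_i(\lambda)=\tfrac12-\tfrac12(\lambda\xi_i-1)^2\le\tfrac12$, the split along $\{|\lambda\xi_i|\le1\}\cup\{|\lambda\xi_i|>1\}$, Taylor expansion with cubic remainder on the small event, uniform bounds times $\mathbf{P}(|\lambda\xi_i|>1|\mathcal{F}_{i-1})\le\lambda^2\mathbf{E}[\xi_i^2\mathbf{1}_{\{|\lambda\xi_i|>1\}}|\mathcal{F}_{i-1}]$ on the large one, and the sweep of the truncation bias via $|\xi_i|\le\lambda\xi_i^2$ on $\{|\lambda\xi_i|>1\}$ --- is exactly the standard argument of Shao (1999) and Jing--Shao--Wang (2003) that the paper cites in lieu of a written proof, so there is nothing to compare there beyond agreement.

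The one place where your writeup does not close is the fifth identity, and ironically the route you reject is the one that works. Write $\mathbf{E}[\zeta_i e^{\zeta_i}|\mathcal{F}_{i-1}]=\tfrac12\lambda^2\mathbf{E}[\xi_i^2\mathbf{1}_{\{|\lambda\xi_i|\le1\}}|\mathcal{F}_{i-1}]+R$, where $R$ collects the large-event contribution, the centering term $\lambda\mathbf{E}[\xi_i\mathbf{1}_{\{|\lambda\xi_i|\le1\}}|\mathcal{F}_{i-1}]$ and the cubic remainder; each piece of $R$ is bounded both by an absolute constant and by $C\widetilde{\varepsilon}_\lambda$, so $R^2\le C'\widetilde{\varepsilon}_\lambda$. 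The main term is handled by the Cauchy--Schwarz inequality $(\mathbf{E}[\xi_i^2\mathbf{1}_{\{|\lambda\xi_i|\le1\}}|\mathcal{F}_{i-1}])^2\le\mathbf{E}[|\xi_i|^3\mathbf{1}_{\{|\lambda\xi_i|\le1\}}|\mathcal{F}_{i-1}]\,\mathbf{E}[|\xi_i|\mathbf{1}_{\{|\lambda\xi_i|\le1\}}|\mathcal{F}_{i-1}]$ together with $\lambda\mathbf{E}[|\xi_i|\mathbf{1}_{\{|\lambda\xi_i|\le1\}}|\mathcal{F}_{i-1}]\le1$, which gives $(\lambda^2\mathbf{E}[\xi_i^2\mathbf{1}_{\{|\lambda\xi_i|\le1\}}|\mathcal{F}_{i-1}])^2\le\lambda^3\mathbf{E}[|\xi_i|^3\mathbf{1}_{\{|\lambda\xi_i|\le1\}}|\mathcal{F}_{i-1}]\le\widetilde{\varepsilon}_\lambda$. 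This is the decisive inequality, and your sketch never states it. Your substitute --- the pointwise bound $|\zeta_i|e^{\zeta_i}\le C(\lambda|\xi_i|\wedge1)$ followed by Cauchy--Schwarz --- produces, after squaring, a term of order $\lambda^2\mathbf{E}[\xi_i^2\mathbf{1}_{\{|\lambda\xi_i|\le1\}}|\mathcal{F}_{i-1}]$ to the \emph{first} power, which is not $O(\widetilde{\varepsilon}_\lambda)$ (take $\xi_i=\pm\sigma$ with $\lambda\sigma$ small: the ratio to $\widetilde{\varepsilon}_\lambda=\lambda^3\sigma^3$ is $1/(\lambda\sigma)$). So as literally described that step fails; you need the square of the truncated second moment, obtained as above, not the truncated second moment itself.
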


\begin{lemma}
\label{lem003}
Let $Z_i=\xi_i^2 -\mathbf{E}[\xi_i^2|  \mathcal{F}_{i-1}] .$  Then for all $\lambda >0,$
\begin{eqnarray*}
\mathbf{E}[Z_ie^{\zeta_i(\lambda)  }|  \mathcal{F}_{i-1} ]&=&   O(1) \frac{1}{\lambda^2} \widetilde{\varepsilon}_\lambda,\\
\mathbf{E}[Z_i^2   e^{\zeta_i (\lambda) } |  \mathcal{F}_{i-1}]&=&   O(1) \frac{1}{\lambda^4} \widetilde{\varepsilon}_\lambda,
\end{eqnarray*}
where $O(1)$ is bounded by an absolute constant.
\end{lemma}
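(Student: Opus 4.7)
The plan is to prove both estimates by dissecting the conditional expectation according to whether $|\lambda \xi_i|\le 1$ or $|\lambda \xi_i|>1$, exploiting the identity
\[
\zeta_i(\lambda)=\lambda\xi_i-\tfrac12\lambda^2\xi_i^2=\tfrac12-\tfrac12(\lambda\xi_i-1)^2,
\]
which gives both the universal bound $e^{\zeta_i(\lambda)}\le e^{1/2}$ and a Gaussian-type decay for large $|\lambda\xi_i|$. This is the same structure already used for Lemmas \ref{lem001} and \ref{lem002}; the new input is only to track how the centering of $Z_i$ and the extra powers of $\xi_i$ interact with the threshold $|\lambda\xi_i|=1$.

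For the first identity I would start from the centering $\mathbf{E}[Z_i|\mathcal{F}_{i-1}]=0$, which lets me rewrite
\[
\mathbf{E}[Z_i e^{\zeta_i(\lambda)}|\mathcal{F}_{i-1}]=\mathbf{E}[Z_i(e^{\zeta_i(\lambda)}-1)|\mathcal{F}_{i-1}].
\]
On $\{|\lambda\xi_i|\le 1\}$ Taylor expansion gives $e^{\zeta_i(\lambda)}-1=\zeta_i(\lambda)+O(\zeta_i(\lambda)^2)$ with an absolute constant; multiplying by $Z_i=\xi_i^2-\mathbf{E}[\xi_i^2|\mathcal{F}_{i-1}]$ produces monomials $\lambda\xi_i^3$, $\lambda^2\xi_i^4$ and lower-order pieces that either cancel via $\mathbf{E}[\xi_i|\mathcal{F}_{i-1}]=0$ or are bounded using $|\xi_i|^k\le|\xi_i|^3/\lambda^{k-3}$ for $k\ge 3$, reducing everything to a multiple of $\lambda\mathbf{E}[|\xi_i|^3\mathbf{1}_{\{|\lambda\xi_i|\le 1\}}|\mathcal{F}_{i-1}]=\widetilde{\varepsilon}_\lambda/\lambda^2$. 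On $\{|\lambda\xi_i|>1\}$ I use $|Z_i|\le \xi_i^2+\mathbf{E}[\xi_i^2|\mathcal{F}_{i-1}]$ together with $|e^{\zeta_i(\lambda)}-1|\le e^{1/2}+1$, and control the $\mathbf{E}[\xi_i^2|\mathcal{F}_{i-1}]\,\mathbf{P}(|\lambda\xi_i|>1|\mathcal{F}_{i-1})$ contribution via Chebyshev's inequality $\mathbf{P}(|\lambda\xi_i|>1|\mathcal{F}_{i-1})\le \lambda^2\mathbf{E}[\xi_i^2\mathbf{1}_{\{|\lambda\xi_i|>1\}}|\mathcal{F}_{i-1}]$.

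For the second identity I apply the same splitting to $Z_i^2\le 2\xi_i^4+2(\mathbf{E}[\xi_i^2|\mathcal{F}_{i-1}])^2$. On $\{|\lambda\xi_i|\le 1\}$ one has $\xi_i^4\le|\xi_i|^3/\lambda$, which produces a $\widetilde{\varepsilon}_\lambda/\lambda^4$ bound directly. On $\{|\lambda\xi_i|>1\}$ I would split further into $\{1<|\lambda\xi_i|\le 2\}$, where the trivial estimate $\xi_i^4\le 4\xi_i^2/\lambda^2$ gives the required form, and $\{|\lambda\xi_i|>2\}$, where the Gaussian decay of $e^{\zeta_i(\lambda)}$ forces $(\lambda\xi_i)^4 e^{\zeta_i(\lambda)}\le c$, so that $\xi_i^4 e^{\zeta_i(\lambda)}\le c/\lambda^4$ and the resulting probability is again absorbed by Chebyshev. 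The $(\mathbf{E}[\xi_i^2|\mathcal{F}_{i-1}])^2$ piece is treated similarly using the decomposition $\mathbf{E}[\xi_i^2|\mathcal{F}_{i-1}]\le \lambda^{-2}+\mathbf{E}[\xi_i^2\mathbf{1}_{\{|\lambda\xi_i|>1\}}|\mathcal{F}_{i-1}]$.

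The main technical obstacle is the outer region $\{|\lambda\xi_i|>2\}$ in the $Z_i^2$ estimate, where the crude bound $e^{\zeta_i(\lambda)}\le e^{1/2}$ is insufficient: the polynomial factor $\xi_i^4$ can blow up, and one must invoke the Gaussian decay $e^{-(\lambda\xi_i-1)^2/2}$ to absorb it and recover the $1/\lambda^4$ prefactor. Beyond this, the argument is careful bookkeeping against the two summands defining $\widetilde{\varepsilon}_\lambda$ and the martingale-difference property $\mathbf{E}[\xi_i|\mathcal{F}_{i-1}]=0$.
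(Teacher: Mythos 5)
The paper does not actually prove this lemma; it only remarks that ``the proof is similar to that in \citep{S99} and \citep{JSW03}'', so there is no in-paper argument to compare against. Your overall strategy is the standard one used in those references and is the right one: split at $|\lambda\xi_i|=1$, use the identity $\zeta_i(\lambda)=\tfrac12-\tfrac12(1-\lambda\xi_i)^2$ to get both $e^{\zeta_i(\lambda)}\le e^{1/2}$ and Gaussian decay killing polynomial factors on the outer event, use $|\xi_i|^k\le|\xi_i|^3/\lambda^{k-3}$ on the inner event, and use Chebyshev to convert $\mathbf{P}(|\lambda\xi_i|>1\,|\,\mathcal{F}_{i-1})$ into $\lambda^2\mathbf{E}[\xi_i^2\mathbf{1}_{\{|\lambda\xi_i|>1\}}|\mathcal{F}_{i-1}]$. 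For the first identity your argument goes through, provided you note that the term $\mathbf{E}[\xi_i^2|\mathcal{F}_{i-1}]\cdot\lambda\mathbf{E}[\xi_i\mathbf{1}_{\{|\lambda\xi_i|\le1\}}|\mathcal{F}_{i-1}]$ does not vanish (only the full conditional mean of $\xi_i$ is zero) and must be bounded by combining $\mathbf{E}[\xi_i^2|\mathcal{F}_{i-1}]\le\lambda^{-2}(1+\widetilde{\varepsilon}_\lambda)$ with the crude bound $|\mathbf{E}[e^{\zeta_i(\lambda)}|\mathcal{F}_{i-1}]-1|\le 1+e^{1/2}$ on the piece carrying the factor $\widetilde{\varepsilon}_\lambda$, so that no $\widetilde{\varepsilon}_\lambda^2$ survives.

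There is, however, a genuine gap in your treatment of the second identity. The step $Z_i^2\le 2\xi_i^4+2(\mathbf{E}[\xi_i^2|\mathcal{F}_{i-1}])^2$ discards the cancellation coming from the centering: the second piece contributes $2(\mathbf{E}[\xi_i^2|\mathcal{F}_{i-1}])^2\,\mathbf{E}[e^{\zeta_i(\lambda)}|\mathcal{F}_{i-1}]$, and after inserting $\mathbf{E}[\xi_i^2|\mathcal{F}_{i-1}]\le\lambda^{-2}+\mathbf{E}[\xi_i^2\mathbf{1}_{\{|\lambda\xi_i|>1\}}|\mathcal{F}_{i-1}]$ you are left with a term of order $\lambda^{-4}$ carrying \emph{no} factor of $\widetilde{\varepsilon}_\lambda$; when $\widetilde{\varepsilon}_\lambda$ is small this is not $O(1)\lambda^{-4}\widetilde{\varepsilon}_\lambda$. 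To repair this you must expand $\mathbf{E}[Z_i^2e^{\zeta_i(\lambda)}|\mathcal{F}_{i-1}]=\mathbf{E}[\xi_i^4e^{\zeta_i(\lambda)}|\mathcal{F}_{i-1}]-2\mathbf{E}[\xi_i^2|\mathcal{F}_{i-1}]\mathbf{E}[\xi_i^2e^{\zeta_i(\lambda)}|\mathcal{F}_{i-1}]+(\mathbf{E}[\xi_i^2|\mathcal{F}_{i-1}])^2\mathbf{E}[e^{\zeta_i(\lambda)}|\mathcal{F}_{i-1}]$ and use $\mathbf{E}[\xi_i^2e^{\zeta_i(\lambda)}|\mathcal{F}_{i-1}]=\mathbf{E}[\xi_i^2|\mathcal{F}_{i-1}]+O(1)\lambda^{-2}\widetilde{\varepsilon}_\lambda$ together with $\mathbf{E}[e^{\zeta_i(\lambda)}|\mathcal{F}_{i-1}]=1+O(1)\widetilde{\varepsilon}_\lambda$, so that the $(\mathbf{E}[\xi_i^2|\mathcal{F}_{i-1}])^2$ contributions cancel up to a nonpositive remainder. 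Even then the resulting bound is $O(1)\lambda^{-4}\widetilde{\varepsilon}_\lambda(1+\widetilde{\varepsilon}_\lambda)^2$: the stated estimate with an absolute constant genuinely requires $\lambda^2\mathbf{E}[\xi_i^2|\mathcal{F}_{i-1}]=O(1)$ (take $\xi_i\in\{-N,0,N\}$ with $\mathbf{P}(\xi_i=\pm N)=1/4$ and $\lambda=1$ to see that the second identity fails otherwise), a restriction that is harmless in the paper because the lemma is only invoked under (A3)/(A4) with $\lambda=o(\kappa_n^{-1})$ or $o(\gamma_n^{-1})$, but which your sketch, like the lemma's literal statement, does not acknowledge.
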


Using  Lemma
\ref{lem002}, we obtain the following upper and lower bounds for $B_n(\lambda ).$
\begin{lemma}
\label{lem3.2} Assume   conditions  (A2) and (A3) with $\rho \in (0 , 1]$. Then
for all $0 \leq \lambda  =o(\max\{ \varepsilon_n^{-1},  \kappa_n^{-1}  \}),$
\begin{equation}\label{f25}
B_n(\lambda) =  \frac12 \lambda^2  \langle M \rangle_n +  O(1) \lambda^{2+\rho}  \varepsilon_n^\rho  ,
\end{equation}
where $O(1)$ is bounded by an absolute constant.
\end{lemma}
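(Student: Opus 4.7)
The plan is to expand $b_i(\lambda)$ individually by applying the identities of Lemma \ref{lem002}, and then to sum the resulting expressions, using condition (A2) to control the accumulated error.

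Starting from the explicit formula (\ref{brads}), I write
\[
b_i(\lambda) \;=\; \frac{\mathbf{E}[\zeta_i(\lambda)\,e^{\zeta_i(\lambda)}\mid\mathcal{F}_{i-1}]}{\mathbf{E}[e^{\zeta_i(\lambda)}\mid\mathcal{F}_{i-1}]}.
\]
Lemma \ref{lem002} gives a numerator equal to $\tfrac12 \lambda^2\mathbf{E}[\xi_i^2\mid\mathcal{F}_{i-1}] + O(1)\widetilde{\varepsilon}_\lambda$ and a denominator equal to $1+O(1)\widetilde{\varepsilon}_\lambda$. Before dividing I need the denominator to be bounded away from $0$; this is where conditions (A2) and (A3) enter. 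Since $\rho\in(0,1]$, the elementary estimate $\widetilde{\varepsilon}_\lambda \le \lambda^{2+\rho}\mathbf{E}[|\xi_i|^{2+\rho}\mid\mathcal{F}_{i-1}]$ (split the two pieces on $\{|\lambda\xi_i|\le 1\}$ and its complement), combined with the pointwise variance bound from (A3), gives $\widetilde{\varepsilon}_\lambda \le c\lambda^2\kappa_n^2$ as well. In the stated range both summed and individual bounds are $o(1)$, so the denominator lies in, say, $[1/2, 3/2]$.

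Consequently the ratio can be rearranged as
\[
b_i(\lambda) - \tfrac12 \lambda^2\mathbf{E}[\xi_i^2\mid\mathcal{F}_{i-1}] \;=\; \frac{O(1)\widetilde{\varepsilon}_\lambda \,-\, \tfrac12\lambda^2\mathbf{E}[\xi_i^2\mid\mathcal{F}_{i-1}]\cdot O(1)\widetilde{\varepsilon}_\lambda}{1+O(1)\widetilde{\varepsilon}_\lambda},
\]
which, using $\lambda^2 \mathbf{E}[\xi_i^2\mid \mathcal{F}_{i-1}]=O(1)$ on the range under consideration, reduces to $O(1)\widetilde{\varepsilon}_\lambda$. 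Summing this identity over $i=1,\dots,n$ yields
\[
B_n(\lambda) \;=\; \tfrac12 \lambda^2 \langle M \rangle_n \,+\, O(1)\sum_{i=1}^n \widetilde{\varepsilon}_\lambda^{(i)},
\]
where $\widetilde{\varepsilon}_\lambda^{(i)}$ denotes the quantity attached to the $i$-th term.

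The last step is to bound $\sum_i\widetilde{\varepsilon}_\lambda^{(i)}$. The pointwise inequality $\widetilde{\varepsilon}_\lambda^{(i)}\le \lambda^{2+\rho}\mathbf{E}[|\xi_i|^{2+\rho}\mid\mathcal{F}_{i-1}]$ and the renormalized form of (A2), namely $\sum_{i=1}^n\mathbf{E}[|\xi_i|^{2+\rho}\mid\mathcal{F}_{i-1}]\le \varepsilon_n^\rho$, give $\sum_i\widetilde{\varepsilon}_\lambda^{(i)}\le \lambda^{2+\rho}\varepsilon_n^\rho$, which is exactly the claimed error term. The most delicate point in the argument is calibrating the smallness of $\widetilde{\varepsilon}_\lambda^{(i)}$: individually only (A3) plus $\lambda = o(\kappa_n^{-1})$ forces it to tend to zero, while globally only (A2) plus $\lambda = o(\varepsilon_n^{-1})$ controls the sum. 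Thus the uniform control of the denominator in the ratio and the control of the accumulated error have to be carried out using the two conditions simultaneously, which is the step where care with the allowed range of $\lambda$ matters.
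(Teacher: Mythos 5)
Your proof follows essentially the same route as the paper's: starting from the representation (\ref{brads}), applying Lemma \ref{lem002} to the numerator and denominator, dividing, and summing the pointwise bound $\widetilde{\varepsilon}_\lambda \le \lambda^{2+\rho}\mathbf{E}[|\xi_i|^{2+\rho}\mid\mathcal{F}_{i-1}]$ via condition (A2). The only difference is that you spell out the division step (denominator bounded away from zero via (A3), and $\lambda^2\mathbf{E}[\xi_i^2\mid\mathcal{F}_{i-1}]=O(1)$) which the paper leaves implicit, so the argument is correct and not genuinely different.
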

\begin{proof}  According to the definition of $b_i(\lambda ),$ we have
$$b_i(\lambda )=\frac{\mathbf{E}[\zeta_i(\lambda)e^{\zeta_i(\lambda)}|\mathcal{F}_{i-1}]}{\mathbf{E}[e^{\zeta_i(\lambda)}|\mathcal{F}%
_{i-1}]}  .$$
By Lemma \ref{lem002}, it follows that
$$\mathbf{E}[\zeta_i(\lambda)e^{\zeta_i(\lambda)}|\mathcal{F}_{i-1}]= \frac12 \lambda^2 \mathbf{E}[  \xi_i ^{2} |\mathcal{F}_{i-1}] +  O(1)\widetilde{\varepsilon}_\lambda$$
and
\begin{eqnarray}\label{fsd2}
\mathbf{E}[e^{\zeta_i(\lambda)}|\mathcal{F}_{i-1}] = 1+  O(1) \widetilde{\varepsilon}_\lambda .
\end{eqnarray}
Therefore,  conditions  (A2)  and (A3) imply that  for all $0 \leq \lambda  =o(\max\{ \varepsilon_n^{-1},  \kappa_n^{-1}  \} ),$
\begin{eqnarray*}
b_i(\lambda )= \frac12 \lambda^2 \mathbf{E}[  \xi_i ^{2} |\mathcal{F}_{i-1}] +  O(1) \widetilde{\varepsilon}_\lambda
\end{eqnarray*}
and
$$ B_n(\lambda) =  \frac12 \lambda^2  \langle M \rangle_n +  O(1) \lambda^{2+\rho}\varepsilon_n^\rho  $$
as desired.
\end{proof}

The following lemma shows that condition (A4) implies condition (A3) with $\kappa_n=\gamma_n.$
\begin{lemma}\label{lem3.1}
Assume condition (A4). Then $ \mathbf{E}[\xi_{i}^2 | \mathcal{F}_{i-1}]  \leq   \gamma_n^{2}.$
\end{lemma}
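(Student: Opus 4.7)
The plan is to combine condition (A4) with a conditional Lyapunov (Jensen) inequality applied to the $(2+\rho)$th conditional moment, which for $\rho \in (0,1]$ pins down a one-sided comparison between the second and $(2+\rho)$th conditional moments of $\xi_i$.

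First I would rewrite condition (A4) in terms of the rescaled differences $\xi_i = X_i/B_n$. Dividing both sides of the inequality in (A4) by $B_n^{2+\rho}$ immediately yields the equivalent form
\begin{equation*}
\mathbf{E}[|\xi_i|^{2+\rho} \mid \mathcal{F}_{i-1}] \leq \gamma_n^{\rho}\, \mathbf{E}[\xi_i^2 \mid \mathcal{F}_{i-1}], \qquad 1 \leq i \leq n.
\end{equation*}

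Next I would invoke the conditional Lyapunov inequality: since $\rho \in (0,1]$, the function $t \mapsto t^{(2+\rho)/2}$ is convex on $[0,\infty)$, so by the conditional Jensen inequality applied to $\xi_i^2$,
\begin{equation*}
\bigl(\mathbf{E}[\xi_i^2 \mid \mathcal{F}_{i-1}]\bigr)^{(2+\rho)/2} \leq \mathbf{E}[|\xi_i|^{2+\rho} \mid \mathcal{F}_{i-1}].
\end{equation*}
Chaining this with the reformulation of (A4) gives
\begin{equation*}
\bigl(\mathbf{E}[\xi_i^2 \mid \mathcal{F}_{i-1}]\bigr)^{(2+\rho)/2} \leq \gamma_n^{\rho}\, \mathbf{E}[\xi_i^2 \mid \mathcal{F}_{i-1}].
\end{equation*}
Dividing by $\mathbf{E}[\xi_i^2 \mid \mathcal{F}_{i-1}]$ on the set where this quantity is positive (the inequality is trivial otherwise) and then raising both sides to the power $2/\rho$ yields $\mathbf{E}[\xi_i^2 \mid \mathcal{F}_{i-1}] \leq \gamma_n^2$, as required.

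There is no real obstacle here; the only point requiring a moment of care is the division step, which is legitimate because on $\{\mathbf{E}[\xi_i^2 \mid \mathcal{F}_{i-1}] = 0\}$ the claimed bound $\mathbf{E}[\xi_i^2 \mid \mathcal{F}_{i-1}] \leq \gamma_n^2$ holds automatically.
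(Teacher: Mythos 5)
Your proof is correct and follows exactly the paper's argument: apply the conditional Jensen (Lyapunov) inequality to get $\bigl(\mathbf{E}[\xi_i^2 \mid \mathcal{F}_{i-1}]\bigr)^{(2+\rho)/2} \leq \mathbf{E}[|\xi_i|^{2+\rho} \mid \mathcal{F}_{i-1}]$, chain with (A4), and cancel. The only difference is that you spell out the harmless division-by-zero caveat, which the paper leaves implicit.
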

\begin{proof}
By Jensen's inequality and  condition (A4), it holds that
\begin{eqnarray*}
\mathbf{E}[\xi_{i}^2 | \mathcal{F}_{i-1}]^{(2+\rho)/2}  \leq   \mathbf{E}[|\xi_{i}|^{2+\rho} | \mathcal{F}_{i-1}]
 \leq   \gamma_n^\rho  \mathbf{E}[\xi_{i}^2|\mathcal{F}_{i-1}],
\end{eqnarray*}
from which we get $\mathbf{E}[\xi_{i}^2 | \mathcal{F}_{i-1}] \leq \gamma_n^2 .$
\end{proof}

\begin{lemma}\label{lema1} Assume condition (A4). Then  for any $ t \in [0,   \rho)$,
\begin{equation}
\mathbf{E} [|\xi_{i}|^{2+t} | \mathcal{F}_{i-1} ] \leq \gamma_n^{t } \, \mathbf{E} [\xi_{i}^{2} | \mathcal{F}_{i-1} ].
\end{equation}
\end{lemma}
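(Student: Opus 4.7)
The plan is to reduce to a standard interpolation inequality. First I would rescale condition (A4) to the normalized variables $\xi_i = X_i/B_n$. Dividing both sides of (A4) by $B_n^{2+\rho}$ yields
\begin{equation*}
\mathbf{E}[|\xi_{i}|^{2+\rho} | \mathcal{F}_{i-1}] \leq \gamma_n^{\rho} \, \mathbf{E}[\xi_{i}^{2} | \mathcal{F}_{i-1}],
\end{equation*}
which is exactly the conclusion of the lemma at the endpoint $t=\rho$.

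Next, for $t \in [0, \rho)$, I would express the exponent $2+t$ as a convex combination of $2$ and $2+\rho$, namely
\begin{equation*}
2+t = \Big(1 - \frac{t}{\rho}\Big)\cdot 2 + \frac{t}{\rho}\cdot (2+\rho).
\end{equation*}
Applying the conditional H\"older (Lyapunov) inequality with exponents $\rho/(\rho-t)$ and $\rho/t$ to the factorization $|\xi_i|^{2+t} = |\xi_i|^{2(1-t/\rho)} \cdot |\xi_i|^{(2+\rho)(t/\rho)}$ gives
\begin{equation*}
\mathbf{E}[|\xi_{i}|^{2+t} | \mathcal{F}_{i-1}] \leq \mathbf{E}[\xi_{i}^{2} | \mathcal{F}_{i-1}]^{1-t/\rho} \, \mathbf{E}[|\xi_{i}|^{2+\rho} | \mathcal{F}_{i-1}]^{t/\rho}.
\end{equation*}

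Finally I would plug in the rescaled version of (A4) into the second factor on the right, obtaining
\begin{equation*}
\mathbf{E}[|\xi_{i}|^{2+t} | \mathcal{F}_{i-1}] \leq \mathbf{E}[\xi_{i}^{2} | \mathcal{F}_{i-1}]^{1-t/\rho} \Big(\gamma_n^{\rho} \mathbf{E}[\xi_{i}^{2} | \mathcal{F}_{i-1}]\Big)^{t/\rho} = \gamma_n^{t} \, \mathbf{E}[\xi_{i}^{2} | \mathcal{F}_{i-1}],
\end{equation*}
which is the claim. There is no real obstacle here: once one notices that $2+t$ lies between $2$ and $2+\rho$, the statement is nothing more than a single application of Lyapunov's inequality combined with (A4). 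The only minor bookkeeping is to verify the exponents add to $1$ in both the interpolation step and the final simplification, both of which are immediate.
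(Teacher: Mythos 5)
Your proof is correct and is essentially identical to the paper's: the paper also applies the conditional H\"older inequality with exponents $p=\rho/(\rho-t)$, $q=\rho/t$ to the factorization $|\xi_i|^{2+t}=|\xi_i|^{2(\rho-t)/\rho}\,|\xi_i|^{(2+\rho)t/\rho}$ and then invokes the rescaled form of (A4). Your phrasing via a convex combination of the exponents $2$ and $2+\rho$ is just a cleaner presentation of the same interpolation step.
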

\begin{proof}
Let $l, p, q$ be defined by the following equations
 $$lp=2, \ \ \ \ (2+t-l)q=2+\rho, \ \ \  p^{-1}+ q^{-1}=1, \ \ \ l>0,\ \textrm{and}\  p, q\geq1.$$
Solving the last equations, we get
 $$l= \frac{2(\rho-t)}{\rho },\ \ \ \ \  p=\frac{\rho }{\rho-t}, \ \ \ \ \  q=\frac{\rho }{t }.$$
By  H\"{o}lder's inequality and condition (A4), it is easy to see that
\begin{eqnarray*}
\mathbf{E} [|\xi_{i}|^{2+t} | \mathcal{F}_{i-1} ] &=& \mathbf{E} [|\xi_{i}|^{l} |\xi_{i}|^{2+t-l} | \mathcal{F}_{i-1} ] \\
&\leq &( \mathbf{E} [|\xi_{i}|^{lp}  | \mathcal{F}_{i-1} ])^{1/p} ( \mathbf{E} [  |\xi_{i}|^{(2+t-l)q} | \mathcal{F}_{i-1} ])^{1/q}\\
&\leq & ( \mathbf{E} [\xi_{i}^{2}  | \mathcal{F}_{i-1} ])^{1/p} ( \mathbf{E} [  |\xi_{i}|^{2+\rho} | \mathcal{F}_{i-1} ])^{1/q} \\
&\leq & ( \mathbf{E} [\xi_{i}^{2}  | \mathcal{F}_{i-1} ])^{1/p} (\gamma_n^{\rho } \mathbf{E} [ \xi_{i}^2 | \mathcal{F}_{i-1} ])^{1/q} \\
&\leq & \gamma_n^{ \rho /q} \mathbf{E} [\xi_{i}^{2}  | \mathcal{F}_{i-1} ]\\
&=& \gamma_n^{t } \ \mathbf{E} [\xi_{i}^{2} | \mathcal{F}_{i-1} ].
\end{eqnarray*}
This completes the proof of the lemma.
\end{proof}

\begin{lemma}\label{lema24} Assume conditions (A1) and  (A2). Then   for any $ t \in [0,   \rho)$,
\begin{equation}
\sum_{i=1}^n\mathbf{E}[|\xi_{i}| ^{2+t}  | \mathcal{F}_{i-1}]\leq  2\, \varepsilon_n^t  .
\end{equation}
\end{lemma}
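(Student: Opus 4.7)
The plan is to interpolate the exponent $2+t$ between $2$ (controlled by (A1)) and $2+\rho$ (controlled by (A2)) via a two-step H\"older argument: first a conditional H\"older inequality applied pointwise in $i$, then the discrete H\"older inequality applied to the sum over $i$. This mirrors the H\"older interpolation used in the preceding Lemma \ref{lema1}, but replaces the per-index condition (A4) with the aggregated conditions (A1) and (A2).

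First, for each fixed $i$, I would introduce the conjugate exponents $p=\rho/(\rho-t)$ and $q=\rho/t$, which satisfy $\tfrac1p+\tfrac1q=1$ and are engineered so that $\tfrac{2}{p}+\tfrac{2+\rho}{q}=2+t$. Writing $|\xi_i|^{2+t}=(\xi_i^2)^{1/p}(|\xi_i|^{2+\rho})^{1/q}$ and applying the conditional H\"older inequality gives
\[
\mathbf{E}\bigl[|\xi_i|^{2+t}\bigm|\mathcal{F}_{i-1}\bigr]\;\leq\;\bigl(\mathbf{E}[\xi_i^2|\mathcal{F}_{i-1}]\bigr)^{(\rho-t)/\rho}\bigl(\mathbf{E}[|\xi_i|^{2+\rho}|\mathcal{F}_{i-1}]\bigr)^{t/\rho}.
\]
Summing over $i=1,\dots,n$ and applying the discrete H\"older inequality with the same exponents yields
\[
\sum_{i=1}^n\mathbf{E}\bigl[|\xi_i|^{2+t}\bigm|\mathcal{F}_{i-1}\bigr]\;\leq\;\Bigl(\sum_{i=1}^n\mathbf{E}[\xi_i^2|\mathcal{F}_{i-1}]\Bigr)^{(\rho-t)/\rho}\Bigl(\sum_{i=1}^n\mathbf{E}[|\xi_i|^{2+\rho}|\mathcal{F}_{i-1}]\Bigr)^{t/\rho}.
\]

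At this point the two factors are exactly what (A1) and (A2) control after normalizing by $B_n$. Indeed, (A1) together with $\delta_n\leq 1/4$ gives $\sum_{i=1}^n\mathbf{E}[\xi_i^2|\mathcal{F}_{i-1}]\leq 1+\delta_n^2\leq 17/16$, while (A2) gives $\sum_{i=1}^n\mathbf{E}[|\xi_i|^{2+\rho}|\mathcal{F}_{i-1}]\leq\varepsilon_n^{\rho}$. Substituting these bounds yields $(17/16)^{(\rho-t)/\rho}\,\varepsilon_n^{t}\leq (17/16)\,\varepsilon_n^{t}<2\,\varepsilon_n^{t}$, which is the claim. The boundary value $t=0$ reduces directly to (A1), since then $2\varepsilon_n^0=2\geq 17/16$.

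There is no real obstacle in this proof: the whole argument is a direct two-step H\"older interpolation, and the factor $2$ on the right-hand side comfortably absorbs the $17/16$ slack produced by the hypothesis $\delta_n\leq 1/4$ in (A1). The only point to verify carefully is the algebra of exponents in choosing $p$ and $q$ so that the interpolation identity $\tfrac{2}{p}+\tfrac{2+\rho}{q}=2+t$ holds.
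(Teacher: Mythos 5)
Your proof is correct and follows essentially the same route as the paper: a conditional H\"older inequality with exponents $p=\rho/(\rho-t)$, $q=\rho/t$ applied pointwise in $i$, followed by the discrete H\"older inequality on the sum, with (A1) and (A2) furnishing the two factors and the constant $2$ absorbing the $(17/16)^{1/p}$ slack. Your explicit treatment of the boundary case $t=0$ is a minor addition not spelled out in the paper, but the argument is otherwise identical.
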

\begin{proof}  Recall the notations in the proof of   Lemma \ref{lema1}. It is easy to see  that
\begin{eqnarray*}
\sum_{i=1}^n\mathbf{E}[|\xi_{i}| ^{2+t}  | \mathcal{F}_{i-1}]
 \leq   \sum_{i=1}^n ( \mathbf{E} [\xi_{i}^{2}  | \mathcal{F}_{i-1} ])^{1/p} ( \mathbf{E} [  |\xi_{i}|^{2+\rho} | \mathcal{F}_{i-1} ])^{1/q}.
\end{eqnarray*}
Using H\"{o}lder's inequality and conditions (A1) and (A2), we have
\begin{eqnarray*}
\sum_{i=1}^n\mathbf{E}[|\xi_{i}| ^{2+t}  | \mathcal{F}_{i-1}]
&\leq & \Big(\sum_{i=1}^n \mathbf{E} [\xi_{i}^{2}  | \mathcal{F}_{i-1} ] \Big)^{1/p} \Big ( \sum_{i=1}^n \mathbf{E} [  |\xi_{i}|^{2+\rho} | \mathcal{F}_{i-1} ] \Big)^{1/q}\\
&\leq & 2 \, \varepsilon_n^{t},
\end{eqnarray*}
which gives the desired inequality.
\end{proof}

We will also need the following two lemmas.
\begin{lemma}\label{slemms}   Assume  condition (A1). Then for all $x> 0,$
 \begin{eqnarray*}
  \mathbf{P}\Big(M_n \geq x\sqrt{[M]_n},\    [M]_n  \geq 16 \Big)
 \leq  \frac23 x^{-2/3} \exp\Big\{-\frac34 x^2  \Big\}.
\end{eqnarray*}
\end{lemma}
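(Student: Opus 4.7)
The plan is to prove the bound by an exponential Chebyshev inequality combined with a peeling of the range of $[M]_n$. The only input from condition~(A1) is the pointwise bound $\langle M\rangle_n\le 17/16$ and its consequence $\mathbf{E}[[M]_n]=\mathbf{E}[\langle M\rangle_n]\le 17/16$.

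The first ingredient is a self-normalized exponential inequality of the form
\[
\mathbf{P}\bigl(M_n\ge y,\ a[M]_n+b\langle M\rangle_n\le z\bigr)\le \exp\bigl(-y^2/(4az)\bigr),\qquad y,z>0,
\]
obtained by Chebyshev from a positive supermartingale $\exp(\lambda M_n-a\lambda^2[M]_n-b\lambda^2\langle M\rangle_n)$ after optimization over $\lambda>0$. The standard Bercu--Touati choice $a=b=1/2$ yields only the exponent $-y^2/(2z)$, which would give $-x^2/2$ on each slice; to reach $-3x^2/4$ one needs $a=1/3$ (with a matching $b$), so that Chebyshev in $\lambda$ produces $-y^2/(4az)=-3x^2/4$ once $y^2/z\sim x^2$ on a slice.

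I would then slice $[16,\infty)$ geometrically into $I_k=[16 q^k,16 q^{k+1})$ with $q>1$ close to $1$, and, on $\{M_n\ge x\sqrt{[M]_n}\}\cap\{[M]_n\in I_k\}$, use $M_n\ge 4x q^{k/2}$ together with $a[M]_n+b\langle M\rangle_n\le 16 a q^{k+1}+17b/16$ to obtain a slice estimate of order $\exp(-3x^2/4)$ up to lower-order corrections. Summing over $k<K$ gives a contribution proportional to $K\exp(-3x^2/4)$, while the residual tail $\{[M]_n\ge 16 q^K\}$ is controlled by Markov's inequality, $\mathbf{P}([M]_n\ge 16 q^K)\le (17/16)/(16 q^K)$. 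A truncation level $K=K(x)\propto x^{2/3}$ balances the two contributions and produces the prefactor $\tfrac{2}{3}x^{-2/3}$.

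The main technical obstacle is twofold. First, one must justify the supermartingale property of $\exp(\lambda M_n-\tfrac{1}{3}\lambda^2[M]_n-b\lambda^2\langle M\rangle_n)$ for a specific $b$, which reduces to the pointwise bound $\mathbf{E}[e^{\lambda\xi_i-\tfrac{1}{3}\lambda^2\xi_i^2}\mid\mathcal{F}_{i-1}]\le e^{b\lambda^2\mathbf{E}[\xi_i^2\mid\mathcal{F}_{i-1}]}$; this is routine for conditionally symmetric $\xi_i$ (via $\cosh u\le e^{u^2/2}$) but more delicate in general, and may require a slightly larger $b$ or a symmetrization argument. Second, the constant-level bookkeeping in the peeling step, and in particular the balance of the $K(x)$ slice contributions against the Markov tail so that the resulting bound lands exactly on $\tfrac{2}{3}x^{-2/3}\exp(-\tfrac{3}{4}x^2)$, requires careful tracking of the $17/16$ corrections coming from (A1).
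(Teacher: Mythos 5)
Your plan of ``supermartingale $+$ Chebyshev on geometric slices of $[M]_n$, then Markov for the residual tail'' cannot produce the stated bound, and the balancing you propose is arithmetically off. The Markov tail $\mathbf{P}([M]_n\ge 16q^K)\le \tfrac{17}{256}q^{-K}$ is only \emph{polynomially} small in the truncation level, so to push it below the target $\tfrac23 x^{-2/3}e^{-3x^2/4}$ you need $q^K\gtrsim x^{2/3}e^{3x^2/4}$, i.e.\ $K\gtrsim \tfrac{3x^2}{4\ln q}$ slices --- not $K\propto x^{2/3}$, which would leave a residual of order $e^{-cx^{2/3}}$, astronomically larger than the target. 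With $K\propto x^2$ slices the union bound then multiplies the per-slice estimate by a factor of order $x^2$. Worse, with your choice $a=1/3$ the per-slice exponent is $\frac{x^2v}{4(a q v+17b/16)}\le \frac{3x^2}{4q}<\frac{3x^2}{4}$ (the slices give \emph{no} decay in $k$, since $y^2=x^2\cdot 16q^k$ and $z\approx a\cdot 16q^{k+1}$ scale together), so the sum is at least of order $x^2 e^{-3x^2/(4q)}$, which exceeds $\tfrac23x^{-2/3}e^{-3x^2/4}$ for every $x$. Taking $a$ strictly below $1/3$ (e.g.\ Delyon's $a=1/6$, $b=1/3$, which is the only supermartingale input actually available without extra work) makes each slice decay like $e^{-cx^2}$ with $c>3/4$, but the moderate-$x$ constants still fail: at $x=1$ the target is $\tfrac23e^{-3/4}\approx 0.315$ while already two slices of width $q=1.1$ contribute about $0.29$ each. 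The prefactor $x^{-2/3}$ is simply not a union-bound artifact and cannot be extracted from this mechanism; in addition, your first ingredient (the supermartingale property with $a=1/3$) is flagged but never established.

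The paper's proof is structurally different and much shorter: it combines Delyon's inequality $\mathbf{E}\exp\{\lambda M_n-\tfrac{\lambda^2}{2}(\tfrac13[M]_n+\tfrac23\langle M\rangle_n)\}\le 1$ with the self-normalized exponential inequality of de la Pe\~{n}a and Pang (with $p=q=2$), which yields
$\mathbf{P}\big(|M_n|/\sqrt{\tfrac32(\tfrac13[M]_n+\tfrac23\langle M\rangle_n+\mathbf{E}M_n^2)}\ge t\big)\le (\tfrac23)^{2/3}t^{-2/3}e^{-t^2/2}$;
the $t^{-2/3}$ prefactor comes from a H\"older-type optimization inside that inequality, not from slicing. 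The role of the threshold $16$ is exactly to absorb the additive terms: on $\{[M]_n\ge 16\}$ one has $[M]_n\ge \tfrac34[M]_n+4\ge \tfrac34[M]_n+\tfrac32\langle M\rangle_n+\tfrac94\mathbf{E}M_n^2$ (using $\langle M\rangle_n\le 17/16$ from (A1) and $\mathbf{E}M_n^2=1$), which boosts the effective threshold from $x$ to $\sqrt{3/2}\,x$ and hence the exponent from $-x^2/2$ to $-3x^2/4$ and the prefactor to exactly $\tfrac23x^{-2/3}$. If you want an elementary self-contained route you would need to reprove something like the de la Pe\~{n}a--Pang moment argument, not replace it by a union bound.
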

\begin{proof}  By inequality (11) of \citep{D09}, we have for all $\lambda \in \mathbf{R},$
\begin{eqnarray*}
  \mathbf{E}\exp\Big\{\lambda M_n - \frac{\lambda^2}{2}( \frac13 [M]_n + \frac23 \langle M\rangle_n )  \Big\}
 \leq  1.
\end{eqnarray*}
Applying the last inequality to the exponential inequality of \citep{DP09} with $p=q=2, $ we deduce that for all $x>0,$
 \begin{eqnarray}
  \mathbf{P}\Bigg(  \frac{|M_n|}{\sqrt{ \frac32  ( \frac13 [M]_n + \frac23 \langle M\rangle_n +\mathbf{E}M_n^2)} }  \geq  x  \Bigg)
 \leq  \Big(\frac23\Big)^{2/3} x^{-2/3} \exp\Big\{-\frac12 x^2  \Big\}.
\end{eqnarray}
By condition (A1) and the fact $\mathbf{E}\langle M \rangle_n  = \mathbf{E}M_n^2=1 $, it is easy to see that for all $x>0,$
 \begin{eqnarray}
  \mathbf{P}\Big(M_n \geq x\sqrt{[M]_n},\    [M]_n  \geq 16 \Big)
&\leq&  \mathbf{P}\Big(M_n \geq x\sqrt{\frac34[M]_n+ 4},\    [M]_n  \geq 16 \Big) \nonumber \\
&\leq&  \mathbf{P}\Big(M_n \geq x\sqrt{\frac34[M]_n   + \frac32\langle M \rangle_n + \frac94 \mathbf{E}M_n^2  },\    [M]_n  \geq 16 \Big) \nonumber \\
&\leq&  \mathbf{P}\Big(M_n \geq x\sqrt{\frac34[M]_n   + \frac32\langle M \rangle_n + \frac94\mathbf{E}M_n^2  } \, \Big) \nonumber \\
&=&  \mathbf{P}\bigg(M_n \geq  \sqrt{\frac32}  x \, \sqrt{\frac12[M]_n   +  \langle M \rangle_n + \frac32\mathbf{E}M_n^2  } \ \bigg) \nonumber \\
&\leq& \frac23 x^{-2/3} \exp\Big\{-\frac34 x^2  \Big\}  \nonumber
\end{eqnarray}
as desired.
\end{proof}

\begin{lemma}\label{slemma58}  Assume  conditions (A1) and   (A2). Then
$$\mathbf{P}\big(  | [M]_n - \langle M\rangle_n  | \geq 1 \big)\leq c_{\rho} \, \big(  \varepsilon_n ^{ (2+\rho)/2 }     + \varepsilon_n^\rho \,   \big)  .$$
\end{lemma}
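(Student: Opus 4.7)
Set $D_n := [M]_n - \langle M \rangle_n$. The key observation is that
\[
D_n = \sum_{i=1}^n Z_i, \qquad Z_i := \xi_i^2 - \mathbf{E}[\xi_i^2 \mid \mathcal{F}_{i-1}],
\]
is a martingale with respect to $(\mathcal{F}_i)$. The overall plan is to bound $\mathbf{P}(|D_n|\ge 1)$ by combining an $L^p$-moment inequality for $D_n$ with Markov's inequality, the exponent $p$ being chosen to match the $(2+\rho)$-th moment information in condition (A2).

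I would first treat the range $\rho \in (0,2]$, which covers all the subsequent applications of the lemma. Taking $p = (2+\rho)/2 \in (1,2]$ and applying the von Bahr--Esseen inequality for martingale differences yields $\mathbf{E}|D_n|^{(2+\rho)/2} \le 2\sum_{i=1}^n \mathbf{E}|Z_i|^{(2+\rho)/2}$. Expanding $|Z_i|^{(2+\rho)/2} \le c_\rho\bigl(|\xi_i|^{2+\rho} + \mathbf{E}[\xi_i^2|\mathcal{F}_{i-1}]^{(2+\rho)/2}\bigr)$ and using the conditional Jensen bound $\mathbf{E}[\xi_i^2|\mathcal{F}_{i-1}]^{(2+\rho)/2} \le \mathbf{E}[|\xi_i|^{2+\rho}|\mathcal{F}_{i-1}]$ (valid since $(2+\rho)/2 \ge 1$), together with condition (A2), I would obtain $\mathbf{E}|D_n|^{(2+\rho)/2} \le c_\rho \varepsilon_n^\rho$. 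Markov's inequality then delivers $\mathbf{P}(|D_n|\ge 1) \le c_\rho \varepsilon_n^\rho$, a bound which is absorbed by the stated $c_\rho(\varepsilon_n^{(2+\rho)/2}+\varepsilon_n^\rho)$.

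For $\rho > 2$ (formally covered by the statement but not required later), von Bahr--Esseen no longer applies at $p=(2+\rho)/2>2$, and I would instead appeal to a Rosenthal/Burkholder-type martingale inequality
\[
\mathbf{E}|D_n|^{p} \le c_p \Bigl( \mathbf{E}\bigl(\textstyle\sum_i \mathbf{E}[Z_i^2|\mathcal{F}_{i-1}]\bigr)^{p/2} + \sum_i \mathbf{E}|Z_i|^{p}\Bigr)
\]
with $p=(2+\rho)/2$. The jump sum is again controlled by $\varepsilon_n^\rho$ as above, while the conditional-variance sum, after using $\mathbf{E}[Z_i^2|\mathcal{F}_{i-1}] \le \mathbf{E}[\xi_i^4|\mathcal{F}_{i-1}]$ and Lyapunov-interpolating $\xi_i^4$ between the $L^2$ and $L^{2+\rho}$ conditional norms, together with condition (A1) (which gives $\sum \mathbf{E}[\xi_i^2|\mathcal{F}_{i-1}]\le 17/16$) and (A2), yields precisely the $\varepsilon_n^{(2+\rho)/2}$ contribution. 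The main technical point is arranging this interpolation so that all constants depend only on $\rho$; for the applications in the remainder of the paper the $\rho\le 2$ argument of the previous paragraph suffices and the first term in the stated bound is redundant.
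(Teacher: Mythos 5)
Your proposal is correct and follows essentially the same route as the paper: the same martingale decomposition, von Bahr--Esseen plus Markov for $\rho\in(0,2]$ yielding the $c_\rho\varepsilon_n^\rho$ bound, and Rosenthal's inequality for $\rho>2$ with the conditional-variance term controlled by interpolating $\xi_i^4$ between the $L^2$ and $L^{2+\rho}$ conditional norms (the paper packages this interpolation as its Lemma \ref{lema24}). Your observation that the $\varepsilon_n^{(2+\rho)/2}$ term only arises in the $\rho>2$ case also matches the paper's proof.
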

\begin{proof}
Notice  that $   [M]_n - \langle M \rangle_n      =   \sum_{i=1}^n(\xi_i^2-\mathbf{E} [\xi_i^2 | \mathcal{F}_{i-1}]) $  is a martingale.
For $\rho$,   we distinguish two cases as follows.

When $\rho \in (0, 2],$  by the inequality  of \citep{V65},  it follows that
\begin{eqnarray}
\mathbf{E} [ |[M]_n - \langle M \rangle_n |^{ (2+\rho)/2 }]
  & \leq&    \sum_{i=1}^n\mathbf{E}[| \xi_i^2-\mathbf{E}[\xi_i^2 | \mathcal{F}_{i-1}]| ^{(2+\rho)/2}] \nonumber \\
& \leq&  c_1 \sum_{i=1}^n\mathbf{E}[|\xi_i| ^{ 2+\rho }  ]  \nonumber\\
& \leq&  c_2\,  \varepsilon_n^\rho, \nonumber
\end{eqnarray}
where the last line follows by conditions (A1) and   (A2).
Hence, by Markov's inequality,
\begin{eqnarray*}
\mathbf{P}\big(  | [M]_n - \langle M\rangle_n  | \geq 1 \big) &\leq& \mathbf{E} [ |[M]_n - \langle M \rangle_n |^{ (2+\rho)/2 }] \\
&\leq&  c_2 \, \varepsilon_n^\rho,
\end{eqnarray*}

When $\rho >2,$  by Rosenthal's inequality (cf.,  Theorem 2.12 of \citep{HH80}), Lemma \ref{lema24}, and condition (A2), it follows that
\begin{eqnarray}
&&\mathbf{E} [ |[M]_n - \langle M \rangle_n |^{ (2+\rho)/2 }] \nonumber \\
&& \leq c_{\rho, 1} \bigg( \mathbf{E} \Big( \sum_{i=1}^n \mathbf{E}[  (\xi_i^2-\mathbf{E}[\xi_i^2 | \mathcal{F}_{i-1}] )^2   |\mathcal{F}_{i-1} ]   \Big)^{ (2+\rho)/4 }    + \sum_{i=1}^n\mathbf{E} | \xi_i^2-\mathbf{E}[\xi_i^2 | \mathcal{F}_{i-1}]| ^{(2+\rho)/2}   \bigg)\nonumber \\
&& \leq     c_{\rho, 2} \bigg( \mathbf{E} \Big( \sum_{i=1}^n \mathbf{E}[  \xi_i^4     |\mathcal{F}_{i-1} ]   \Big)^{ (2+\rho)/4 }   + \sum_{i=1}^n\mathbf{E} | \xi_i |^{ 2+\rho }   \bigg)\nonumber \\
&& \leq     c_{\rho, 3} \, \big(  \varepsilon_n ^{ (2+\rho)/2 }     + \varepsilon_n^\rho   \big)  .
\end{eqnarray}
This completes the proof of the lemma.
\end{proof}

Consider the predictable process $\Psi (\lambda )=(\Psi
_k(\lambda ),\mathcal{F}_k)_{k=0,...,n},$ which is related to the martingale $M$ as follows:
\begin{equation}
\Psi _k(\lambda )=\sum_{i=1}^k\ln \mathbf{E}[e^{\zeta_i(\lambda)}|\mathcal{F}_{i-1}]. \label{f29}
\end{equation}
By equality (\ref{fsd2}), we easily obtain  the following elementary bound for the process $\Psi
(\lambda ).$
\begin{lemma}
\label{lem3.3} Assume  conditions (A2) and (A3) with $\rho \in (0 , 1]$. Then  for all $0 \leq \lambda =o( \min\{\varepsilon_n^{-1}, \kappa_n^{-1}\})  ,$
\[
\Psi _n(\lambda ) =   O(1) \lambda^{2+\rho}  \varepsilon_n^\rho   ,
\]
where $O(1)$ is bounded by an absolute constant.
\end{lemma}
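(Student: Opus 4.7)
The plan is to apply the first identity of Lemma \ref{lem002}, i.e.\ equality (\ref{fsd2}), to each conditional Laplace transform, linearize the logarithm, and then sum using condition (A2). Writing
\[
\mathbf{E}[e^{\zeta_i(\lambda)} \mid \mathcal{F}_{i-1}] = 1 + r_i,
\]
so that $\Psi_n(\lambda) = \sum_{i=1}^n \ln(1+r_i)$, equality (\ref{fsd2}) combined with the inequality $|\widetilde{\varepsilon}_\lambda| \leq \lambda^{2+\rho}\mathbf{E}[|\xi_i|^{2+\rho} \mid \mathcal{F}_{i-1}]$ recalled just before Lemma \ref{lem001} yields
\[
|r_i| \leq C \,\lambda^{2+\rho}\, \mathbf{E}[|\xi_i|^{2+\rho} \mid \mathcal{F}_{i-1}]
\]
for some absolute constant $C$.

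The first step is to verify that $|r_i| \leq 1/2$ uniformly in $i$ for $n$ large, so that the linearization $|\ln(1+r_i)| \leq 2|r_i|$ applies. Returning to the very definition of $\widetilde{\varepsilon}_\lambda$ and using $|\xi_i| \leq 1/\lambda$ on the event $\{|\lambda\xi_i|\leq 1\}$, one gets the crude estimate $|\widetilde{\varepsilon}_\lambda| \leq 2\lambda^2 \mathbf{E}[\xi_i^2 \mid \mathcal{F}_{i-1}]$; by (A3) this gives
\[
|r_i| \leq 2C\, \lambda^2 \kappa_n^2,
\]
which is $o(1)$ under the hypothesis $\lambda = o(\kappa_n^{-1})$.

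Finally, summing the linearized inequalities and invoking condition (A2) yields
\[
|\Psi_n(\lambda)| \leq 2\sum_{i=1}^n |r_i| \leq 2C\, \lambda^{2+\rho} \sum_{i=1}^n \mathbf{E}[|\xi_i|^{2+\rho} \mid \mathcal{F}_{i-1}] \leq 2C\, \lambda^{2+\rho} \varepsilon_n^\rho,
\]
which is the stated estimate. There is no real obstacle; the only point requiring care is the joint use of two complementary bounds on $|\widetilde{\varepsilon}_\lambda|$: the crude one $2\lambda^2\kappa_n^2$ coming from (A3), needed to make each $|r_i|$ small enough to linearize $\ln(1+r_i)$, and the sharp $(2+\rho)$-moment bound, whose summation via (A2) furnishes the required factor $\varepsilon_n^\rho$.
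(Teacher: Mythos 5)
Your proposal is correct and follows the same route the paper intends: Lemma \ref{lem3.3} is obtained directly from equality (\ref{fsd2}) by expanding $\ln(1+r_i)$ and summing via condition (A2). Your extra step — using the crude bound $|\widetilde{\varepsilon}_\lambda|\leq 2\lambda^2\mathbf{E}[\xi_i^2\mid\mathcal{F}_{i-1}]\leq 2\lambda^2\kappa_n^2$ together with $\lambda=o(\kappa_n^{-1})$ to justify the linearization of the logarithm — is exactly the detail the paper leaves implicit in the phrase ``we easily obtain.''
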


In the proofs of Theorems \ref{th1} and \ref{th2}, we make use of the following assertion, which gives
us a rate of convergence in the CLT for the conjugate
martingale $Y( \lambda  )$ under the probability measure $\mathbf{P}_{ \lambda  }.$
\begin{proposition}
\label{lem3.4}
Assume    conditions (A1) and (A4).

\begin{description}
   \item[\textbf{[i]}] If $\rho \in (0, 1)$, then for all $0\leq \lambda =o(\gamma_n^{-1})$,
\[
\sup_{x}\Big| \mathbf{P}_\lambda (  Y_n(\lambda )/\lambda  \leq x)-\Phi (x)\Big| \leq
c_{  \rho} \, \Big( \lambda^\rho \gamma_n^\rho+  \gamma_n^\rho       + \delta_n \Big) ;
\]
   \item[\textbf{[ii]}] If $\rho =1$, then  for all $0\leq \lambda =o(\gamma_n^{-1})$,
\[
\sup_{x}\Big| \mathbf{P}_\lambda (  Y_n(\lambda )/\lambda \leq x)-\Phi (x)\Big| \leq
c  \, \Big(\lambda  \gamma_n+   \gamma_n  |\ln \gamma_n|     +  \delta_n \Big) ;
\]
 \end{description}
with the convention that $Y_n(0 )/0 =\sum_{i=1}^n\xi_i$.
\end{proposition}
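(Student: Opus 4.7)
The proof of Proposition \ref{lem3.4} applies a martingale Berry--Esseen inequality to the conjugate martingale $Y(\lambda)/\lambda = \sum_{i=1}^n \eta_i(\lambda)/\lambda$ under the changed measure $\mathbf{P}_\lambda$. The plan is to verify two quantitative properties of this martingale: (a) a Lyapounov-type bound $\sum_i \mathbf{E}_\lambda[|\eta_i(\lambda)/\lambda|^{2+\rho}|\mathcal{F}_{i-1}] \leq c_\rho \gamma_n^\rho$, and (b) a conditional variance bound $|\langle Y(\lambda)\rangle_n/\lambda^2 - 1| \leq \delta_n^2 + c_\rho\lambda^\rho\gamma_n^\rho$. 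A Bolthausen-type martingale Berry--Esseen inequality (which picks up a $|\ln\gamma_n|$ factor for $\rho = 1$) then yields the claimed bounds.

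For (b), I would start from identity (\ref{f24}) and substitute the expansions of Lemma \ref{lem002}, namely $\mathbf{E}[\zeta_i^2(\lambda)e^{\zeta_i(\lambda)}|\mathcal{F}_{i-1}] = \lambda^2\mathbf{E}[\xi_i^2|\mathcal{F}_{i-1}] + O(\widetilde{\varepsilon}_\lambda)$, $\mathbf{E}[e^{\zeta_i(\lambda)}|\mathcal{F}_{i-1}] = 1 + O(\widetilde{\varepsilon}_\lambda)$, and $(\mathbf{E}[\zeta_i(\lambda)e^{\zeta_i(\lambda)}|\mathcal{F}_{i-1}])^2 = O(\widetilde{\varepsilon}_\lambda)$. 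This yields $\langle Y(\lambda)\rangle_n/\lambda^2 = \langle M\rangle_n + O(1)\lambda^{-2}\sum_i\widetilde{\varepsilon}_\lambda$. Since $\widetilde{\varepsilon}_\lambda \leq \lambda^{2+\rho}\mathbf{E}[|\xi_i|^{2+\rho}|\mathcal{F}_{i-1}]$, condition (A4) combined with (A1) gives $\sum_i \mathbf{E}[|\xi_i|^{2+\rho}|\mathcal{F}_{i-1}] \leq \gamma_n^\rho \langle M\rangle_n \leq c\gamma_n^\rho$, so that $\lambda^{-2}\sum_i\widetilde{\varepsilon}_\lambda \leq c\lambda^\rho\gamma_n^\rho$ and hence (b). For (a), I would decompose $\eta_i(\lambda)/\lambda = \xi_i - (\lambda/2)\xi_i^2 - b_i(\lambda)/\lambda$, bound $|b_i(\lambda)/\lambda| = O(\lambda\gamma_n^2)$ via Lemma \ref{lem3.2} applied termwise, and use the change-of-measure identity $\mathbf{E}_\lambda[\,\cdot\,|\mathcal{F}_{i-1}] = \mathbf{E}[\,\cdot\,e^{\zeta_i(\lambda)}|\mathcal{F}_{i-1}]/\mathbf{E}[e^{\zeta_i(\lambda)}|\mathcal{F}_{i-1}]$. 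The denominator is close to $1$ by Lemma \ref{lem002}, and (A4) together with Lemma \ref{lema1} reduces the conditional moments in the numerator to multiples of $\mathbf{E}[\xi_i^2|\mathcal{F}_{i-1}]$, producing the required summed bound $c_\rho \gamma_n^\rho$ in the range $\lambda = o(\gamma_n^{-1})$.

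The main obstacle is securing the sharp rate $\gamma_n^\rho$ (for $\rho < 1$) or $\gamma_n|\ln\gamma_n|$ (for $\rho = 1$) in the Berry--Esseen bound, rather than the weaker $\gamma_n^{\rho/(3+\rho)}$ that Haeusler's general theorem would yield under only a Lyapounov condition. This requires a Bolthausen-type argument exploiting (A4) (under which, by Lemma \ref{lem3.1}, the conditional second moments are uniformly bounded by $\gamma_n^2$), either applied directly or via a truncation of the differences at a scale balancing Bolthausen's bounded-martingale inequality against the truncation bias. The intrinsic logarithmic loss in Bolthausen's theorem at $\rho = 1$ is the source of the $\gamma_n|\ln\gamma_n|$ term that subsequently propagates into Theorems \ref{th1}[ii], \ref{th2}[ii], and \ref{rezultCRMD}[ii].
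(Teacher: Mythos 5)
Your reduction is the same one the paper uses: the proof of Proposition \ref{lem3.4} is deferred to the supplement \citep{FGLS17}, and there it proceeds exactly as you outline --- verify that the conjugate martingale $Y(\lambda)/\lambda$ has conditional variance within $O(\delta_n^2+\lambda^\rho\gamma_n^\rho)$ of $1$ (your step (b), via (\ref{f24}) and Lemma \ref{lem002}, is correct as computed) and that its differences inherit an (A4)-type conditional moment domination with parameter $O(\gamma_n)$ (your step (a)), then invoke a rate-of-convergence theorem in the martingale CLT of Bolthausen/Grama--Haeusler type. Your identification of the $|\ln\gamma_n|$ term at $\rho=1$ as the intrinsic logarithmic loss of that theorem is also the right diagnosis.

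The gap is that the entire weight of the proposition rests on the ``Bolthausen-type martingale Berry--Esseen inequality'' you invoke as a black box, and no off-the-shelf version applies here: Bolthausen's theorem requires a.s.\ bounded differences, whereas the $\eta_i(\lambda)/\lambda$ are unbounded and live under the tilted measure $\mathbf{P}_\lambda$; Haeusler's theorem, as you note, only yields the exponent $\rho/(3+\rho)$. What is actually needed --- and what occupies most of the supplement --- is a Berry--Esseen bound with rate $\gamma^\rho+\delta$ (resp.\ $\gamma|\ln\gamma|+\delta$) under the \emph{termwise} domination $\mathbf{E}[|\eta_i|^{2+\rho}\,|\,\mathcal{F}_{i-1}]\le c\,\gamma^\rho\,\mathbf{E}[\eta_i^2\,|\,\mathcal{F}_{i-1}]$ together with $|\langle Y\rangle_n/\lambda^2-1|\le\delta^2$; note that the summed Lyapounov bound you state in (a) is strictly weaker than this and would not give the sharp rate, so the termwise form must be carried through the change of measure (using (\ref{sfghkm423}), Lemma \ref{lem002} and Lemma \ref{lema1}) and then the inequality itself must be proved, e.g.\ by a Lindeberg/Bolthausen telescoping argument with the truncation--bias balance you gesture at. Until that inequality is established (or a precise citation with matching hypotheses is supplied), the proposal is a correct plan rather than a proof.
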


Similarly, we have the following Berry-Esseen bound.
\begin{proposition}
\label{lem3.5}
Assume   conditions  (A1), (A2), and (A3). Then for all $0\leq \lambda =o(\max\{ \varepsilon_n^{-1},  \kappa_n^{-1}  \})$,
\[
\sup_{x}\Big| \mathbf{P}_\lambda (  Y_n(\lambda )/\lambda  \leq x)-\Phi (x)\Big| \leq
c_{  \rho} \, \Big( \lambda^{\rho/2} \gamma_n^{\rho/2}+  \varepsilon_n^{\rho/(3+\rho)}  + \delta_n \Big),
\]
with the convention that $Y_n(0 )/0 =\sum_{i=1}^n\xi_i$.
\end{proposition}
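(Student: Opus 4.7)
My plan is to apply a Haeusler-type Berry--Esseen theorem for martingales to the normalised conjugate martingale $(Y_k(\lambda)/\lambda,\, \mathcal{F}_k)_{0\leq k\leq n}$ under the changed measure $\mathbf{P}_\lambda$. Such a theorem controls $\sup_x|\mathbf{P}_\lambda(Y_n(\lambda)/\lambda \leq x) - \Phi(x)|$ by the sum of a Lyapunov-type term $\bigl(\sum_{i=1}^n\mathbf{E}_\lambda[|\eta_i(\lambda)/\lambda|^{2+\rho}|\mathcal{F}_{i-1}]\bigr)^{1/(3+\rho)}$ and a root-type contribution coming from the deviation of the conditional variance $\langle Y(\lambda)\rangle_n/\lambda^2$ from $1$. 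The three terms on the right-hand side of the claimed inequality will come from these two ingredients: the Lyapunov contribution produces the $\varepsilon_n^{\rho/(3+\rho)}$ factor, while the variance discrepancy produces $\delta_n$ together with $\lambda^{\rho/2}\gamma_n^{\rho/2}$.

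For the variance step, combining the expression (\ref{f24}) with Lemmas \ref{lem002} and \ref{lem003} yields after a short calculation
$$
\mathbf{E}_\lambda\bigl[\eta_i^2(\lambda)/\lambda^2 \,\big|\, \mathcal{F}_{i-1}\bigr] = \mathbf{E}[\xi_i^2|\mathcal{F}_{i-1}] + O(1)\,\widetilde{\varepsilon}_\lambda/\lambda^2.
$$
Summing over $i$ and invoking condition (A1) for the quadratic variation $\langle M\rangle_n$, together with the bound $\widetilde{\varepsilon}_\lambda \leq \lambda^{2+\rho_1}\mathbf{E}[|\xi_i|^{2+\rho_1}|\mathcal{F}_{i-1}]$ with $\rho_1 = \min\{\rho,1\}$ controlled via conditions (A2) and (A3), I would obtain the pathwise bound $|\langle Y(\lambda)\rangle_n/\lambda^2 - 1| \leq \delta_n^2 + c_\rho\, \lambda^\rho \varepsilon_n^\rho$. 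The root-type penalty inside the martingale Berry--Esseen bound then delivers the $\delta_n + \lambda^{\rho/2}\gamma_n^{\rho/2}$ piece of the inequality.

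For the Lyapunov step, I would apply the change-of-measure formula (\ref{sfghkm423}) to rewrite $\mathbf{E}_\lambda[|\eta_i(\lambda)/\lambda|^{2+\rho}|\mathcal{F}_{i-1}]$, expand $\eta_i(\lambda)=\zeta_i(\lambda)-b_i(\lambda)$ using Lemma \ref{lem002}, and split the resulting conditional expectations on $\{|\lambda\xi_i|\leq 1\}$ versus $\{|\lambda\xi_i|>1\}$. On the first set $e^{\zeta_i(\lambda)}$ is bounded and $|\eta_i(\lambda)|$ is controlled by a constant multiple of $\lambda|\xi_i|+\lambda^2\xi_i^2$; on the second set the $(2+\rho)$th-moment contribution is already absorbed into $\widetilde{\varepsilon}_\lambda$. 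Combining this with condition (A2) yields
$$
\sum_{i=1}^n \mathbf{E}_\lambda\bigl[|\eta_i(\lambda)/\lambda|^{2+\rho} \,\big|\, \mathcal{F}_{i-1}\bigr] \leq c_\rho\,\varepsilon_n^\rho,
$$
so that raising to the power $1/(3+\rho)$ yields the $\varepsilon_n^{\rho/(3+\rho)}$ term.

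The main obstacle is the Lyapunov estimate under the tilted measure: the centering term $b_i(\lambda)$ must be tracked precisely after being subtracted from $\zeta_i(\lambda)$, and the regime $\{|\lambda\xi_i|>1\}$ has to be handled so that the exponential density $Z_i(\lambda)/Z_{i-1}(\lambda)$ does not inflate the $(2+\rho)$th conditional moment beyond order $\varepsilon_n^\rho$. Once these cancellations are secured, assembling the three contributions produces the claimed Berry--Esseen bound.
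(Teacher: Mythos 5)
Your reduction has the right architecture, and it matches the strategy the paper announces in Section \ref{sec3}: compute the conditional variance and the conditional Lyapunov ratio of the conjugate martingale $Y(\lambda)$ under $\mathbf{P}_\lambda$ via (\ref{f24}) and Lemmas \ref{lem002}--\ref{lem003}, then feed them into a martingale Berry--Esseen inequality. The two verifications you outline are indeed obtainable: the pathwise variance estimate $|\langle Y(\lambda)\rangle_n/\lambda^2-1|\leq \delta_n^2+c_\rho\lambda^{\rho_1}\varepsilon_n^{\rho_1}$ follows by summing your displayed identity and using (A1)--(A3), and the conditional Lyapunov bound $\sum_i\mathbf{E}_\lambda[|\eta_i(\lambda)/\lambda|^{2+\rho}\,|\,\mathcal{F}_{i-1}]\leq c_\rho\varepsilon_n^\rho$ is within reach because $\zeta_i(\lambda)\leq 1/2$ always, which tames the tilting density on $\{|\lambda\xi_i|>1\}$.

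The genuine gap is the black box you invoke at the outset. Haeusler's theorem bounds the Kolmogorov distance by $C\bigl(\sum_i\mathbf{E}|X_i|^{2+\rho}+\mathbf{E}|\langle M\rangle_n-1|^{1+\rho/2}\bigr)^{1/(3+\rho)}$: the variance discrepancy sits \emph{inside} the $(3+\rho)$-th root, so with $\|\langle Y(\lambda)\rangle_n/\lambda^2-1\|_\infty\leq\delta_n^2+c\lambda^\rho\varepsilon_n^\rho$ it yields only $(\delta_n^2+c\lambda^\rho\varepsilon_n^\rho)^{(2+\rho)/(2(3+\rho))}$; since $(2+\rho)/(2(3+\rho))<1/2$, this gives $\delta_n^{(2+\rho)/(3+\rho)}$ rather than $\delta_n$, which is strictly weaker than the claimed bound. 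The variant in which the variance discrepancy enters at the power $1/2$ --- the ``root-type contribution'' you need --- is available in Grama and Haeusler (2000) only for martingales with \emph{bounded} differences, which $\eta_i(\lambda)$ are not under (A2) alone. A Berry--Esseen inequality that combines the conditional Lyapunov ratio at power $1/(3+\rho)$ with the sup-norm of the variance discrepancy at power $1/2$, for unbounded differences and under the tilted measure, is precisely the new estimate this proposition asserts; the paper proves it from scratch in the supplemental article and explicitly flags it as the hard step. Until that hybrid bound is established, your argument reduces the proposition to a statement of essentially the same difficulty. (A minor point: your variance step produces $\lambda^{\rho/2}\varepsilon_n^{\rho/2}$, not $\lambda^{\rho/2}\gamma_n^{\rho/2}$; $\gamma_n$ is not defined under (A1)--(A3), and the paper itself uses the proposition with $\varepsilon_n$ in that slot in the proof of Theorem \ref{th3.1}, so this is consistent with what you actually derive.)
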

The proofs of Propositions \ref{lem3.4} and  \ref{lem3.5} are much more complicated and we give details in the supplemental article \citep{FGLS17}.

\section{\textbf{Proof of the main results}}\label{sec5}
\setcounter{equation}{0}
We start with the proofs of Theorems \ref{th1} and \ref{th2}, and conclude with the proof of Theorem \ref{th3.1}.
\subsection{Proof of Theorem  \ref{th1} }
By (\ref{denow}), it is easy to see that
$$\Big\{S_n \geq x\sqrt{[S]_n}\Big\}=\Big\{M_n \geq x\sqrt{[M]_n}\Big\}  \supseteq \Big\{ M_n  \geq \frac{x^2 + \lambda^2 [M]_n  }{2 \lambda}  \Big\}=\Big\{ \sum_{i=1}^n\zeta_i(\lambda)  \geq \frac{x^2}{2}  \Big\}. $$
For all $0\leq \lambda   = o(\gamma_n^{-1}),$ according to (\ref{f21}), (\ref{xb}) and (\ref{f29}), we have the following representation:
\begin{eqnarray}
\mathbf{P}\Big(W_n  \geq x  \Big)
&=& \mathbf{E}_\lambda \Big[ Z_n (\lambda)^{-1}\mathbf{1}_{\{ S_n \geq x\sqrt{[S]_n}  \}} \Big]  \nonumber\\
&=& \mathbf{E}_\lambda\Big[ \exp \Big\{
-   \sum_{i=1}^n \zeta_i(\lambda)    +\Psi _n(\lambda )\Big\} \mathbf{1}_{\{ M_n \geq x \sqrt{[M]_n}    \}}\Big] \nonumber\\
&\geq& \mathbf{E}_\lambda\Big[ \exp \Big\{
- Y_n(\lambda ) -  B_n(\lambda)  +\Psi _n(\lambda )\Big\} \mathbf{1}_{\{ \sum_{i=1}^n\zeta_i(\lambda)  \geq \frac{x^2}{2}    \}}\Big]   \nonumber\\
&=&  \mathbf{E}_\lambda\Big[ \exp \Big\{
- Y_n(\lambda ) -  B_n(\lambda)   +\Psi _n(\lambda )\Big\}  \mathbf{1}_{\{ Y_n(\lambda )   \geq \frac{x^2}{2} - B_n(\lambda) \}}\Big]  . \nonumber
\end{eqnarray}
Using Lemmas  \ref{lem3.1},  \ref{lem3.2}  and  \ref{lem3.3}, we get
\begin{eqnarray}
\mathbf{P}\Big( W_n  \geq x  \Big)
   &\geq&   \mathbf{E}_\lambda\Big[ \exp \Big\{
- Y_n(\lambda ) - \Big(\frac12 \lambda^2\langle M \rangle_n + c_1 \lambda^{2+\rho}  \varepsilon_n^\rho \Big)\Big\} \nonumber\\
&& \ \  \ \ \ \ \    \times \mathbf{1}_{\{ Y_n(\lambda )  \geq \frac{x^2}{2} - (\frac12 \lambda^2\langle M \rangle_n + c_1 \lambda^{2+\rho}  \varepsilon_n^\rho ) \}}\Big]  . \nonumber
\end{eqnarray}
Condition (A1) implies that
$$|\langle M \rangle_n -1 | \leq \delta_n^2,$$
and thus
\begin{eqnarray}
 \mathbf{P}\Big(W_n \geq x \Big)
& \geq& \ \mathbf{E}_\lambda\Big[ \exp \Big\{
- Y_n(\lambda ) - \Big(\frac12 \lambda^2 + c_1 \lambda^{2+\rho}  \varepsilon_n^\rho \Big) (1+\delta_n^2)\Big\} \nonumber\\
&& \ \  \ \ \ \ \ \ \   \times \mathbf{1}_{\{ Y_n(\lambda )   \geq \frac{x^2}{2} - (\frac12 \lambda^2 + c_1 \lambda^{2+\rho}  \varepsilon_n^\rho ) (1+\delta_n^2) \}}\Big]  . \label{th2f32}
\end{eqnarray}
Let
$\overline{\lambda }=\overline{\lambda }(x)$ be the largest
solution of the following equation
\[
\Big(\frac12 \lambda^2 + c_1 \lambda^{2+\rho}  \varepsilon_n^\rho \Big ) (1+\delta_n^2)=\frac{x^2}{2} .
\]
The definition of $\overline{\lambda }$ implies that  for all $0 \leq x =o(\gamma_n^{-1}) ,$
\begin{equation}\label{f34}
c_{2}\,x \leq \overline{\lambda }\leq \frac{x } { \sqrt{1+\delta_n^2}}
\end{equation}
and
\begin{equation}\label{f35}
\overline{\lambda }=x+c_{3}\theta_{0}(x^{1+\rho}\varepsilon_n^\rho +x \delta_n ^2),
\end{equation}
where $0\leq \theta_{0} \leq 1.$
From (\ref{th2f32}), we obtain
\begin{equation}
\mathbf{P}\Big(W_n  \geq x  \Big) \geq  \exp\bigg\{ - \Big (\frac12 \overline{\lambda}^2 + c_1 \overline{\lambda}^{2+\rho}  \varepsilon_n^\rho \Big) (1+\delta_n^2) \bigg\}\mathbf{E}_{\overline{\lambda }}\Big[e^{- Y_n(\overline{\lambda })}\mathbf{1}_{\{ Y_n(\overline{\lambda }) \geq 0  \}} \Big] .  \label{ines36df}
\end{equation}
Setting $F_n(y)=\mathbf{P}_{\overline{\lambda }}(Y_n(\overline{\lambda })\leq y ),$
we get
\begin{equation}\label{f37}
\mathbf{P}\Big( W_n   \geq x  \Big)\geq \exp\bigg\{ -c_4\big( \overline{\lambda}^2\delta_n^2+\overline{\lambda}^{2+\rho}  \varepsilon_n^\rho\big)- \frac{\overline{\lambda }^2}{2} \bigg\}\int_0^\infty e^{-
 y}dF_n(y) .
\end{equation}
By integration by parts, we have the following bound:
\begin{equation}\label{f3l8}
\int_0^\infty e^{- y}dF_n(y)\geq\int_0^\infty e^{- y}d\Phi (y/\overline{\lambda})-2\sup_y\Big| F_n(y)-\Phi (y/\overline{\lambda})\Big| .
\end{equation}
For $\rho$,   we distinguish two cases as follows.

\emph{Case 1}: If $\rho \in (0, 1)$, combining (\ref{f37}) and (\ref{f3l8}),  by Proposition \ref{lem3.4}, we have for all
$0 \leq  x  =o(\gamma_n ^{-1}) ,$
\begin{eqnarray}
\mathbf{P}\Big(W_n   \geq x  \Big) &\geq&     \exp\bigg\{ -c_4 \big( \overline{\lambda}^2\delta_n^2+\overline{\lambda}^{2+\rho}  \varepsilon_n^\rho \big) - \frac{\overline{\lambda }^2}{2} \bigg\} \ \ \ \ \ \ \  \  \ \  \  \ \
  \nonumber \\ &&  \, \times\left( \int_0^\infty e^{-
\overline{\lambda }y}d\Phi (y)  - c_{1, \rho}\left( \overline{\lambda}^\rho \gamma_n^\rho +   \gamma_n^\rho       + \delta_n \right)  \right). \label{sfd1s}
\end{eqnarray}
Because
\begin{equation}\label{fsphi}
e^{- \lambda^2/2}\int_0^\infty e^{- \lambda  y}d\Phi
(y)=1-\Phi \left(  \lambda \right)
\end{equation}
and
\begin{equation}
\frac 1{ 1+\lambda  }e^{- \lambda^2/2}\leq   \sqrt{2 \pi}  \Big(
1-\Phi \left(  \lambda \right) \Big) , \ \ \ \ \ \lambda \geq 0, \label{f39}
\end{equation}
we obtain the following lower bound
\begin{eqnarray}
\frac{\mathbf{P}\big(W_n  \geq x  \big)}{1-\Phi \left( \overline{\lambda }\right)} &\geq&   \exp\left\{ -c_4(\overline{\lambda}^2\delta_n^2+ \overline{\lambda}^{2+\rho}  \varepsilon_n^\rho ) \right\} \Big( 1-c_{2, \rho}\, (1+ \overline{\lambda})(\overline{\lambda}^\rho \gamma_n^\rho +\gamma_n^\rho    +  \delta_n  \, ) \Big) \nonumber \\
 &\geq& \exp\left\{ -c_{3, \rho}\Big(\overline{\lambda}^2\delta_n^2+ \overline{\lambda}^{2+\rho}  \varepsilon_n^\rho + (1+ \overline{\lambda})(\overline{\lambda}^\rho \gamma_n^\rho  +\gamma_n^\rho    +  \delta_n  \, ) \Big)\right\}  ,\label{f40}
\end{eqnarray}
for all $0\leq \overline{\lambda} \leq \frac1{2 c_{2, \rho}}\min\{ \gamma_n^{-\rho/(1+\rho)} , \delta_n^{-1} \}.$

Next, we consider the case of $\frac1{2 c_{2, \rho}} \min\{ \gamma_n^{-\rho/(1+\rho)} , \delta_n^{-1} \} \leq \overline{\lambda} =o(\gamma_n^{-1}).$ Let $K \geq 1$ be an absolute constant, whose exact value is chosen later.
It is easy to see that
\begin{eqnarray}\label{jknjsta}
\mathbf{E}_{\overline{\lambda}} \left[e^{- Y_n(\overline{\lambda})}\mathbf{1}_{\{ Y_n(\overline{\lambda})\geq0 \}} \right] &\geq& \mathbf{E}_{\overline{\lambda}} \Big[e^{- Y_n(\overline{\lambda})}\mathbf{1}_{\{0\leq Y_n(\overline{\lambda})\leq \overline{\lambda} K \tau \}} \Big] \nonumber\\
 &\geq&e^{-\overline{\lambda} K \tau}\mathbf{P}_{\overline{\lambda}} \Big(0\leq Y_n(\overline{\lambda})\leq \overline{\lambda} K \tau \Big),
\end{eqnarray}
where $\tau = \overline{\lambda}^{\rho }  \gamma_n^{\rho }       + \delta_n . $
By  Proposition \ref{lem3.4}, we have
\begin{eqnarray*}
\mathbf{P}_{\overline{\lambda}} \Big(0\leq Y_n(\overline{\lambda})\leq    \overline{\lambda} K \tau \Big) &\geq&   \mathbf{P}  \Big( 0\leq  \mathcal{N}(0, 1)
\leq  K \tau  \Big)  -  c_{4, \rho} \tau  \\
  &\geq& \frac{1}{\sqrt{2\pi}} K \tau  e^{-K^2  \tau^2/2}   - c_{4, \rho} \tau\\
  &\geq& \Big( \frac1{3} K    - c_{4, \rho}\Big) \tau.
\end{eqnarray*}
Letting $K\geq   12c_{4,\rho} $, it follows that
$$\mathbf{P}_{\overline{\lambda}} \Big(0\leq Y_n(\overline{\lambda })\leq  \overline{\lambda} K \tau \Big)  \geq \frac1{4} K \tau = \frac1{4} K  \frac{ \overline{\lambda}^{1+\rho }  \gamma_n^{\rho }  + \overline{\lambda }\delta_n} { \overline{\lambda } }.$$
Choosing $$K=\max \Big\{12c_{4, \rho },  \frac{4 }{ \sqrt{\pi}}(2c_{2, \rho})^{1+\rho} \Big\}$$ and taking into account that
$ \frac1{2 c_{2, \rho}} \min\{ \gamma_n^{-\rho/(1+\rho)} , \delta^{-1} \} \leq \overline{\lambda} =o(\gamma_n^{-1})$,
we conclude that
\begin{eqnarray*}
\mathbf{P}_{\overline{\lambda }} \Big(0\leq Y_n(\overline{\lambda }) \leq \overline{\lambda} K \tau \Big)  \geq    \frac{1}{\sqrt{\pi}\overline{\lambda} } .
\label{jknjstb}
\end{eqnarray*}
Because the inequality
$\frac{1}{\sqrt{\pi}\lambda }  e^{- \lambda^2/2}  \geq 1-\Phi \left(  \lambda \right)$ is valid for all $\lambda \geq 1$,
it follows that  for all $\frac1{2 c_{2, \rho}} \min\{ \gamma_n^{-\rho/(1+\rho)} , \delta^{-1} \} \leq \overline{\lambda} =o(\gamma_n^{-1})$,
\begin{eqnarray}\label{dfac}
\mathbf{P}_{\overline{\lambda}} \Big(0\leq Y_n(\overline{\lambda})\leq  K \tau\Big)   \geq \Big( 1-\Phi \left(  \overline{\lambda} \right) \Big)e^{\overline{\lambda}^2/2} .
\end{eqnarray}
Combining (\ref{ines36df}), (\ref{jknjsta}), and (\ref{dfac}), we obtain
 \begin{eqnarray}
\frac{\mathbf{P}\big(W_n \geq x \big)}{1-\Phi \left( \overline{\lambda}\right)} \geq  \exp \bigg \{ -c_{5, \rho} \left( \overline{\lambda}^2\delta_n^2+ \overline{\lambda}^{2+\rho}  \varepsilon_n^\rho + (1+ \overline{\lambda})(\overline{\lambda}^\rho \gamma_n^\rho + \gamma_n^\rho    +  \delta_n  \, )\right)\bigg\},  \label{fgj53}
\end{eqnarray}
which is valid for all $ \frac1{2 c_{2, \rho}} \min\{ \gamma_n^{-\rho/(1+\rho)},  \delta^{-1} \} \leq \overline{\lambda}  =o(\gamma_n^{-1})$.

From (\ref{f40}) and (\ref{fgj53}), we get for all $ 0 \leq \overline{\lambda}   =o(\gamma_n^{-1})$,
\begin{eqnarray}
\frac{\mathbf{P}\big(W_n \geq x  \big)}{1-\Phi \left( \overline{\lambda}\right)} \geq  \exp \bigg \{ -c_{6, \rho} \left( \overline{\lambda}^2\delta_n^2+ \overline{\lambda}^{2+\rho}  \varepsilon_n^\rho + (1+ \overline{\lambda})(\overline{\lambda}^\rho \gamma_n^\rho + \gamma_n^\rho    +  \delta_n  \, )\right)\bigg\}.  \label{fgjfd36}
\end{eqnarray}
Next, we   substitute $x$ for $\overline{\lambda }$ in the tail
of the normal law $1-\Phi (\overline{\lambda }).$
By (\ref{f34}), (\ref{f35}), and (\ref{f39}), we get
\begin{eqnarray}
   1 \leq \frac{\int_{\overline{\lambda}}^{ \infty}\exp\{- t^2/2 \}d t}{\int_{x}^{ \infty}\exp\{- t^2/2 \}d t}   &\leq &
  1+\frac{\int_{\overline{\lambda}}^{x}\exp\{ -t^2/2 \} d t}{\int_{x}^{ \infty}\exp\{-t^2/2\}d t}\nonumber\\
   & \leq & 1+c_{1}x(x-\overline{\lambda}) \exp\Big\{ (x^2-\overline{\lambda}^2)/2 \Big\}\nonumber\\
   & \leq & \exp\Big\{ c_2\, (x^2\delta_n^2+ x^{2+\rho}  \varepsilon_n^\rho)\Big\} \label{f41}
\end{eqnarray}
and hence
\begin{equation}\label{f42}
1-\Phi \left( \overline{\lambda }\right) =\left( 1-\Phi (x)\right)\exp \left\{ \theta_{1} c\, (
x^{2+\rho}  \varepsilon_n^\rho+ x^2 \delta_n^2 ) \right\}.
\end{equation}
Implementing (\ref{f42}) in (\ref{fgjfd36}) and using (\ref
{f34}), we obtain  for all $0\leq x =o(\gamma_n ^{-1}) ,$
\begin{eqnarray*}
\frac{\mathbf{P}\big(W_n   \geq x  \big)}{1-\Phi \left( x\right) } \geq  \exp\bigg\{ -c_{7, \rho} \Big( x^{2+\rho}  \varepsilon_n^\rho+ x^2 \delta_n^2 +(1+x)(
 x^\rho\gamma_n^\rho  + \gamma_n^\rho      +  \delta_n)\Big) \bigg \}  \nonumber,
\end{eqnarray*}
which gives the desired  lower bound (\ref{t1ie1}).

\emph{Case 2}: If $\rho =1,$  using Proposition  \ref{lem3.4} with $\rho=1,$ we have  for all $0 \leq  x  =o(\gamma_n ^{-1}) ,$
\begin{eqnarray*}
 \mathbf{P}\Big(W_n\geq x  \Big) &\geq&     \exp\bigg\{ -c_1\big( \overline{\lambda}^2\delta_n^2+\overline{\lambda}^{3}  \varepsilon_n  \big) - \frac{\overline{\lambda }^2}{2} \bigg\} \ \ \ \ \ \ \  \  \ \  \  \ \
  \nonumber \\ && \  \times \left( \int_0^\infty e^{-
\overline{\lambda }y}d\Phi (y)  - c_2\Big(  \overline{\lambda} \gamma_n+  \gamma_n |\ln \gamma_n|      + \delta_n \Big)  \right),
\end{eqnarray*}
that is, the term $\gamma_n^\rho$ in inequality (\ref{sfd1s}) has been replaced by $\gamma_n |\ln \gamma_n|.$
By an argument similar to that of  \emph{Case 1},  we obtain the desired lower bound (\ref{t1ie12}).

Notice that $(-S_k, \mathcal{F}_k)_{k=0,...,n}$  also satisfies   conditions (A1), (A2), and (A4).  Thus, the same inequalities hold  when   $\frac{\mathbf{P}(W_n\geq x)}{1-\Phi \left( x\right)}$ is replaced by $\frac{\mathbf{P}(W_n\leq-x)}{ \Phi \left( -x\right)}$ for all $0 \leq  x  =o(\gamma_n ^{-1})$. This completes the proof of
 Theorem \ref{th1}.  \hfill\qed

\subsection{Proof of Theorem  \ref{th2}}

We first prove Theorem \ref{th2} for all $ 1\leq x    = o(\gamma_n^{-1}).$
Observe that
\begin{eqnarray}
\mathbf{P}\Big(W_n \geq x  \Big)&=& \mathbf{P}\Big(W_n \geq x ,\, | [M]_n - \langle M\rangle_n |\leq  \delta_n+  1/(2x) \Big)  \nonumber \\
&&+\ \mathbf{P}\Big(W_n \geq x ,\, | [M]_n - \langle M\rangle_n |> \delta_n+  1/(2x) \Big) . \label{twoterm}
\end{eqnarray}
For the the first term on the right hand side of (\ref{twoterm}), by (\ref{f21}) and (\ref{f23}) with $\lambda=x$, we have the following representation:
\begin{eqnarray}
 &&\mathbf{P}\Big(W_n \geq x ,\, | [M]_n - \langle M\rangle_n |\leq  \delta_n+  1/(2x) \Big) \nonumber \\
&& = \mathbf{E}_x \Big[ Z_n (x)^{-1}\mathbf{1}_{\{ M_n \geq x\sqrt{[M]_n},\  | [M]_n - \langle M\rangle_n   | \leq \delta_n+ 1/(2x)  \}} \Big]  \nonumber\\
&& =\mathbf{E}_x\Big[ e^{
-  Y_n(x) -  B_n(x)   +\Psi _n(x) } \mathbf{1}_{\big\{ x M_n \geq   x^2 \sqrt{ 1 +  [M]_n - 1 },\   | [M]_n - \langle M\rangle_n   | \leq \delta_n+1/(2x)  \big \}}\Big] .\nonumber
\end{eqnarray}
By the inequality $$ \sqrt{1+ y} \geq 1+ y/2 -y^2/2, \ \ \ \ \ \   y\geq -1,$$
condition (A1) and Lemma \ref{lem3.2},  we have for all $ 1\leq x    = o(\gamma_n^{-1}),$
\begin{eqnarray}
&&  \mathbf{P}\Big(W_n\geq x\,,\  | [M]_n - \langle M\rangle_n  | \leq \delta_n+  1/(2x) \Big) \nonumber \\
&&\leq \mathbf{E}_x\Big[ \exp \Big\{
- Y_n(x) -  B_n(x)   +\Psi _n(x )\Big\} \nonumber\\
&& \ \ \ \ \ \ \ \    \times \mathbf{1}_{\big\{ x M_n - \frac12  x^2 [M]_n    +  \frac12 x^2([M]_n - 1)^2   \geq   \frac12 x^2 , \  | [M]_n - \langle M\rangle_n   | \leq \delta_n+ 1/(2x)     \big\}}\Big]    \nonumber \\
&&\leq \mathbf{E}_x\Big[ \exp \Big\{
- Y_n(x) -  B_n(x)   +\Psi _n(x )\Big\} \nonumber\\
&& \ \ \ \ \ \  \ \  \times \mathbf{1}_{\big\{ x M_n - \frac12  x^2 [M]_n    +    x^2([M]_n -\langle M\rangle_n  )^2  + x^2(1 - \langle M\rangle_n )^2  \geq   \frac12 x^2 , \  | [M]_n - \langle M\rangle_n   | \leq \delta_n+ 1/(2x)       \big\}}\Big]    \nonumber \\
&&\leq \mathbf{E}_x\Big[ \exp \Big\{
- Y_n(x) -  B_n(x)   +\Psi _n(x )\Big\} \nonumber\\
&& \ \ \ \ \ \  \ \  \times \mathbf{1}_{\big\{ Y_n(x)  \geq  -   x^2([M]_n -\langle M\rangle_n  )^2 - x^2\delta_n^4 + \frac12 x^2 - B_n(x) , \  | [M]_n - \langle M\rangle_n   | \leq  \delta_n+1/(2x)       \big\}}\Big]    \nonumber \\
&&\leq \mathbf{E}_x\Big[ \exp \Big\{
- Y_n(x) -  B_n(x)   +\Psi _n(x )\Big\} \nonumber\\
&& \ \ \ \ \ \ \ \   \times \mathbf{1}_{\big\{ Y_n(x)   \geq  -  x^{2+\rho}  \varepsilon_n^\rho - x^2\delta_n^4 + \frac12 x^2 - B_n(x) , \  | [M]_n - \langle M\rangle_n   | \leq  (x \varepsilon_n)^{\rho/2}     \big\}}\Big] \nonumber \\
&&\ \ \  +\, \mathbf{E}_x\Big[ \exp \Big\{
- Y_n(x) -  B_n(x)   +\Psi _n(x )\Big\} \nonumber\\
&& \ \ \ \ \ \ \ \    \times \mathbf{1}_{\big\{0>  Y_n(x)  \geq  -   x^2([M]_n -\langle M\rangle_n  )^2 - x^2\delta_n^4 + \frac12 x^2 - B_n(x) , \  (x \varepsilon_n)^{\rho/2} <  | [M]_n - \langle M\rangle_n   | \leq \delta_n+ 1/(2x)     \big\}}\Big]  \nonumber  \\
&&\leq \mathbf{E}_x\Big[ \exp \Big\{
- Y_n(x)-  B_n(x)   +\Psi _n(x )\Big\} \nonumber\\
&& \ \ \ \ \ \ \ \   \times \mathbf{1}_{\big\{ Y_n(x)    \geq  - c_1( x^{2+\rho}  \varepsilon_n^\rho + x^2\delta_n^2)    \big\}}\Big] \nonumber \\
&&\ \ \  +\, \mathbf{E}_x\Big[ \exp \Big\{
- Y_n(x) -  B_n(x)   +\Psi _n(x )\Big\} \nonumber\\
&& \ \ \ \ \ \ \ \    \times \mathbf{1}_{\big\{0>  Y_n(x)  \geq  - \frac14 - c_2( x^{2+\rho}  \varepsilon_n^\rho + x^2\delta_n^2) , \  (x \varepsilon_n)^{\rho/2} <  | [M]_n - \langle M\rangle_n   | \leq \delta_n+ 1/(2x)     \big\}}\Big]  \nonumber  \\
&&:= I_1(x)+ I_2(x) .  \label{ineq62}
\end{eqnarray}
For $I_1(x),$ by an argument similar to the proof of  Theorem \ref{th1}, we get for all $0\leq x =o(\gamma_n^{-1})$,
\begin{eqnarray}
 &&\!\!\!\!\!\!\!\! \frac{I_1(x)}{1-\Phi \left( x\right)}
 \leq\left\{ \begin{array}{ll}
 \!\!\exp\bigg\{  c_{\rho  } \Big( x^{2+\rho}  \varepsilon_n^\rho+ x^2 \delta_n^2 +(1+x)\left(  x^\rho \gamma_n^\rho +   \gamma_n^\rho      +  \delta_n   \right) \Big) \bigg \} &   \textrm{if $\rho \in (0, 1),$}\\
  & \\
  \exp\bigg\{  c  \Big( x^{3}  \varepsilon_n + x^2 \delta_n^2+ (1+x)\left(x \gamma_n+  \gamma_n|\ln \gamma_n|     +  \delta_n   \right) \Big) \bigg \} &   \textrm{if $\rho =1.$}
\end{array} \right. \nonumber\\
&&     \label{ineq45}
\end{eqnarray}
Next,   consider the item $I_2(x).$  By condition (A1),  Lemmas \ref{lem3.2} and \ref{lem3.3},   it is obvious that for all $ 1\leq x    = o(\gamma_n^{-1}),$
\begin{eqnarray}
\!\!\!I_2(x)\!\!&\leq&\!\!\exp\bigg\{ -\frac12 x^2+  c_1  \big( x^{2+\rho}  \varepsilon_n^\rho+ x^2 \delta_n^2   \big) \bigg \} \nonumber \\
&& \times \mathbf{E}_x\Big[ e^ {
- Y_n(x)  }\mathbf{1}_{\big\{0>  Y_n(x)  \geq  -  \frac14 - c_2  ( x^{2+\rho}  \varepsilon_n^\rho+ x^2 \delta_n^2  ) , \ (x \varepsilon_n)^{\rho/2} < | [M]_n - \langle M\rangle_n   |      \big\}}\Big] \nonumber\\
&\leq&\!\!\exp\bigg\{  -\frac12 x^2+  c_1  \big( x^{2+\rho}  \varepsilon_n^\rho+ x^2 \delta_n^2   \big) \bigg \} \nonumber \\
&& \times \mathbf{E}_x\Big[ e^ {\frac14 +c_2  ( x^{2+\rho}  \varepsilon_n^\rho+ x^2 \delta_n^2  )  }\mathbf{1}_{\big\{ (x \varepsilon_n)^{\rho/2} <| [M]_n - \langle M\rangle_n   |      \big\}}\Big] \nonumber\\
&\leq&\!\! e^{\frac14} \exp\bigg\{  -\frac12 x^2+  c_3  \big( x^{2+\rho}  \varepsilon_n^\rho+ x^2 \delta_n^2   \big) \bigg \}   \mathbf{E}_x\Big[  \mathbf{1}_{\big\{(x \varepsilon_n)^{\rho/2} < | [M]_n - \langle M\rangle_n   |      \big\}}\Big]. \label{gjmsg02}
\end{eqnarray}
Denote by  $\langle M(x) \rangle_n = \sum_{i=1}^n\mathbf{E}_x[\xi_i^2 | \mathcal{F}_{i-1}].$ Notice that $\varepsilon_n = O(1)\gamma_n.$
From (\ref{sfghkm423}), using  (\ref{fsd2}), Lemmas \ref{lem003}, \ref{lem3.1} and condition (A2),   we obtain for all $ 1\leq x    = o(\gamma_n^{-1}),$
\begin{eqnarray}\label{f56}
 &&\Big |\langle M(x) \rangle_n - \langle M  \rangle_n   \Big | \nonumber \\
  &&\leq \sum_{i=1}^n\bigg| \frac{\mathbf{E}[\xi
_i^2e^{x \xi_i - x^2 \xi_i^2/2}|\mathcal{F}_{i-1}]}{\mathbf{E}[e^{x \xi_i - x^2 \xi_i^2/2}|\mathcal{F}%
_{i-1}]} - \mathbf{E}[\xi_i^2 |\mathcal{F}_{i-1}]\bigg|  +\sum_{i=1}^n \bigg(\frac{\mathbf{E}[\xi _ie^{x \xi_i - x^2 \xi_i^2/2}|\mathcal{F}_{i-1}]^2}{\mathbf{E}[e^{x \xi_i - x^2 \xi_i^2/2}|\mathcal{F}_{i-1}]^2}\bigg)  \nonumber \\
&&\leq c_1 \sum_{i=1}^n \Big(  \mathbf{E}[x^\rho |\xi _i|^{2+\rho} | \mathcal{F}_{i-1}]  +   (   \mathbf{E}[x \xi _i^2 | \mathcal{F}_{i-1}])^2  \Big)   \nonumber\\
  &&\leq c_1 \sum_{i=1}^n \Big(  \mathbf{E}[x^\rho |\xi _i|^{2+\rho} | \mathcal{F}_{i-1}]  + x^{ 2} \mathbf{E}[ |\xi _i|^{2+\rho} | \mathcal{F}_{i-1}]  (   \mathbf{E}[ \xi _i^2 | \mathcal{F}_{i-1}])^{ (2-\rho)/2}  \Big)   \nonumber\\
&&\leq c_2\, x^\rho \varepsilon_n^\rho. \label{sec8fs7}
\end{eqnarray}
Thus, for all $1\leq x =  o(\gamma_n^{-1}),$
\begin{eqnarray}
I_2(x)
&\leq& e^{\frac14} \exp\bigg\{  -\frac12 x^2+  c_3  \big( x^{2+\rho}  \varepsilon_n^\rho+ x^2 \delta_n^2   \big) \bigg \}   \mathbf{E}_x\Big[  \mathbf{1}_{\big\{ \frac12(x \varepsilon_n)^{\rho/2} < | [M]_n - \langle M(x) \rangle_n   |     \big\}}\Big] \nonumber\\
&\leq&  \frac{4 e^{\frac14} }{(x \varepsilon_n)^{  \rho(2+\rho)/4 }} \exp\bigg\{  -\frac12 x^2+  c_3  \big( x^{2+\rho}  \varepsilon_n^\rho+ x^2 \delta_n^2   \big) \bigg \}   \mathbf{E}_x\big[ | [M]_n - \langle M(x) \rangle_n |^{(2+\rho)/2 }    \big]. \nonumber
\end{eqnarray}
It is obvious  that
\begin{eqnarray*}
   [M]_n - \langle M(x)\rangle_n      =     \sum_{i=1}^n(\xi_i^2-\mathbf{E}_x[\xi_i^2 | \mathcal{F}_{i-1}]) .
\end{eqnarray*}
Thus, $([M]_i - \langle M(x)\rangle_i, \mathcal{F}_{i})_{i=0,...,n} $ is a martingale with respect to  the   probability measure $\mathbf{P}_x.$
By the inequality  of \citep{V65}, it follows that  for all $1\leq x =  o(\gamma_n^{-1}),$
\begin{eqnarray}
\mathbf{E}_x[ |[M]_n - \langle M(x)\rangle_n |^{(2+\rho)/2} ]   & \leq& c_1 \sum_{i=1}^n\mathbf{E}_x[| \xi_i^2-\mathbf{E}_x[\xi_i^2 | \mathcal{F}_{i-1}]| ^{(2+\rho)/2}] \nonumber \\
& \leq&  c_2 \sum_{i=1}^n\mathbf{E}_x[|\xi_i| ^{ 2+\rho }  ]  \nonumber\\
& =&  c_2 \sum_{i=1}^n  \frac{\mathbf{E}[ |\xi_i| ^{ 2+\rho } e^{\zeta_i(x)}|\mathcal{F}_{i-1}]}{\mathbf{E}[e^{\zeta_i(x)}|\mathcal{F}%
_{i-1}]}  \nonumber\\
& \leq&  c_3 \varepsilon_n^\rho   . \label{ineds65}
\end{eqnarray}
Hence,  for all $1\leq x =  o(\gamma_n^{-1}),$
\begin{eqnarray}
I_2(x) &\leq& C\, \frac{ \, \varepsilon_n^{ \rho(2-\rho)/4 } }{ x  ^{  \rho(2+\rho)/4 }} \exp\bigg\{  -\frac12 x^2+  c_3  \big( x^{2+\rho}  \varepsilon_n^\rho+ x^2 \delta_n^2   \big) \bigg \}. \label{ineq65}
\end{eqnarray}

Next, we give an estimation for $\mathbf{P}\Big(W_n \geq x ,\, | [M]_n - \langle M\rangle_n |> \delta_n+ 1/(2x) \Big).$
It is obvious that
\begin{eqnarray*}
&&\mathbf{P}\Big(W_n \geq x ,\, | [M]_n - \langle M\rangle_n |> \delta_n+ 1/(2x) \Big) \\
 &&\leq \mathbf{P}\Big(W_n \geq x ,\, | [M]_n - 1 | + | 1 - \langle M\rangle_n |> \delta_n+ 1/(2x) \Big) \\
&& \leq\mathbf{P}\Big(W_n \geq x ,\, | [M]_n - 1 |> \delta_n/2 + 1/(2x) \Big).
\end{eqnarray*}
To estimate the tail probability in the last line, we follow the argument of \citep{SZ16}.
We have the following decomposition:
\begin{eqnarray}
&&  \mathbf{P}\Big(W_n \geq x   ,\  | [M]_n - 1  | >  \delta_n/2 + 1/(2x) \Big) \nonumber \\
&&\leq \mathbf{P}\Big(M_n/\sqrt{[M]_n} \geq x,\  1+ \delta_n/2 + 1/(2x) < [M]_n  \leq 16   \Big) \nonumber\\
&& \ \ \ \ \ \   +\, \mathbf{P}\Big(M_n/\sqrt{[M]_n} \geq x,\  [M]_n  < 1 -  \delta_n/2 - 1/(2x) \Big)    \nonumber \\
&& \ \ \ \ \ \  + \, \mathbf{P}\Big(M_n/\sqrt{[M]_n} \geq x,\   [M]_n  >  16 \Big) \nonumber \\
&&:= \sum_{v=1}^3  \mathbf{P}\Big( (M_n,\, \sqrt{[M]_n}) \in \mathcal{E}_v \Big)  , \label{3part}
\end{eqnarray}
where $\mathcal{E}_v\subset \mathbf{R}\times \mathbf{R}^+, 1\leq v \leq 3, $ are given by
\begin{eqnarray}
&&  \mathcal{E}_1 =  \Big\{(u, v)\in \mathbf{R}\times \mathbf{R}^+: u/v \geq x,\,  \sqrt{1 + \delta_n/2 + 1/(2x) } < v \leq 4 \Big \} , \nonumber \\
&&\mathcal{E}_2 = \Big\{(u, v)\in \mathbf{R}\times \mathbf{R}^+: u/v \geq x,\,  v <\sqrt{1 -  \delta_n/2 - 1/(2x) } \ \Big\} ,\nonumber\\
&& \mathcal{E}_3 = \Big\{(u, v)\in \mathbf{R}\times \mathbf{R}^+:  u/v \geq x,\, v > 4 \Big\}. \nonumber
\end{eqnarray}
To estimate the probability $\mathbf{P}((M_n, \sqrt{[M]_n} ) \in \mathcal{E}_1 ), $  we introduce the following new conjugate probability measure $\widetilde{\mathbf{P}}_x$ defined by
$$d\widetilde{\mathbf{P}}_x = \widetilde{Z}_n(x) d\mathbf{P},$$
where
$$\widetilde{Z}_n(x )=\prod_{i=1}^k\frac{e^{\widetilde{\zeta}_i(x)}}{\mathbf{E}[e^{\widetilde{\zeta}_i(x)}|
\mathcal{F}_{i-1}]} \ \ \ \ \textrm{and} \ \ \ \ \widetilde{\zeta}_i(x)=x \xi_i - x^2 \xi_i^2/8.$$
Denote by $\widetilde{\mathbf{E}}_{x}$ the expectation with respect to $\widetilde{\mathbf{P}}_{x}$ and   $\langle \widetilde{M}(x) \rangle_n = \sum_{i=1}^n\widetilde{\mathbf{E}}_x[\xi_i^2 | \mathcal{F}_{i-1}].$  By an argument similar to (\ref{sec8fs7}),
it follows that for all $1\leq x =  o(\gamma_n^{-1}),$
$$ \langle \widetilde{M}(x) \rangle_n = \langle M  \rangle_n  + O(1) x^\rho \varepsilon_n^\rho.  $$
By Markov's inequality, we deduce that
\begin{eqnarray}
 &&\mathbf{P}\Big((M_n, \sqrt{[M]_n} ) \in \mathcal{E}_1  \Big)  \nonumber \\
  &&\leq (\delta_n/2 + 1/(2x))^{-2} e^{ -\inf_{(u,v) \in \mathcal{E}_1} ( xu-(vx)^2/8) } \mathbf{E}[([M]_n-1)^2
e^{  x M_n -[M]_n x^2/8  } ]    \nonumber\\
&& \leq 16x^2 e^{ -\inf_{(u,v) \in \mathcal{E}_1} ( xu-(vx)^2/8) } \mathbf{E}[([M]_n-\langle \widetilde{M}(x) \rangle_n)^2
e^{  x M_n -[M]_n x^2/8  } ]    \nonumber \\
&&\ \ \ \  +\, 16 x^{2} e^{ -\inf_{(u,v) \in \mathcal{E}_1} ( xu-(vx)^2/8) } \mathbf{E}[(\langle \widetilde{M}(x)\rangle_n-\langle M \rangle_n)^2
e^{  x M_n -[M]_n x^2/8  } ] \nonumber \\
&& \ \ \ \  +\, 16 \delta_n^{-2} e^{ -\inf_{(u,v) \in \mathcal{E}_1} ( xu-(vx)^2/8) } \mathbf{E}[(\langle M \rangle_n-1)^2
e^{  x M_n -[M]_n x^2/8  } ] \nonumber \\
&&\leq  16 x^2 e^{ -\inf_{(u,v) \in \mathcal{E}_1} ( xu-(vx)^2/8) } \mathbf{E}[([M]_n-\langle \widetilde{M}(x) \rangle_n)^2
e^{  x M_n -[M]_n x^2/8  } ]    \nonumber \\
&&\ \ \ \    +\, C x^{2+2\rho} \varepsilon_n^{2\rho}   e^{ -\inf_{(u,v) \in \mathcal{E}_1} ( xu-(vx)^2/8) } \mathbf{E}[
e^{  x M_n -[M]_n x^2/8  } ] \nonumber \\
&&\ \ \ \    +\, 16 \delta_n^{ 2} e^{ -\inf_{(u,v) \in \mathcal{E}_1} ( xu-(vx)^2/8) } \mathbf{E}[
e^{  x M_n -[M]_n x^2/8  } ]  , \label{inesf35}
\end{eqnarray}
 where it is easy to verify that
\begin{eqnarray}
\inf_{(u,v) \in \mathcal{E}_1} \Big( xu-\frac18(vx)^2  \Big) \geq  \frac78 x^2 +   \frac14 x   -  c\,   x^2 \delta_n^2  .
\end{eqnarray}
  By Lemma \ref{lem001}, conditions (A1) and (A2), it follows
 that
 \begin{eqnarray*}
 \prod_{i=1}^n \mathbf{E}[e^{\widetilde{\zeta}_i(x)}|
\mathcal{F}_{i-1}]  &\leq& \prod_{i=1}^n \Big ( 1+ \frac38  x^2 \mathbf{E} [ \xi_i^2|
\mathcal{F}_{i-1}]   +O(1)  x^{2+\rho}  \mathbf{E}[ |\xi_i|^{2+\rho}  |  \mathcal{F}_{i-1}]  \Big)\\
&\leq& \prod_{i=1}^n \exp\Big\{\frac38  x^2 \mathbf{E} [ \xi_i^2|
\mathcal{F}_{i-1}]   +O(1)  x^{2+\rho}  \mathbf{E}[ |\xi_i|^{2+\rho}  |  \mathcal{F}_{i-1}]  \Big\}\\
 &=&\exp\Big\{   \frac38  x^2 \langle M \rangle_n   +O(1) x^{2+\rho} \sum_{i=1}^n \mathbf{E}[ |\xi_i|^{2+\rho}  |  \mathcal{F}_{i-1}]  \Big\} \\
  &\leq&\exp\Big\{    \frac38  x^2  +   O(1)  (x^{2+\rho}  \varepsilon_n^\rho+x^2\delta_n^2) \Big\}.
\end{eqnarray*}
Therefore, for all $1\leq x =  o(\gamma_n^{-1}),$
\begin{eqnarray*}
&& \mathbf{E}\Big[([M]_n-\langle \widetilde{M}(x) \rangle_n )^2
e^{  x M_n -[M]_n x^2/8  } \Big] \nonumber \\
&&= \mathbf{E}\Big[ \Big(\Pi_{i=1}^n\mathbf{E}[e^{\widetilde{\zeta}_i(x)}|
\mathcal{F}_{i-1}] \Big)\Big( [M]_n-\langle \widetilde{M}(x) \rangle_n \Big)^2\widetilde{Z}_n(x ) \Big] \\
&&\leq \mathbf{E}\Big[ \Big( [M]_n-\langle \widetilde{M}(x) \rangle_n \Big)^2\widetilde{Z}_n(x ) \Big]\exp\Big\{    \frac38  x^2  +   O(1)  (x^{2+\rho}  \varepsilon_n^\rho+x^2\delta_n^2)  \Big\}\\
&&= \widetilde{\mathbf{E}}_x\Big[ \big( [M]_n-\langle \widetilde{M}(x) \rangle_n \big)^2  \Big]\exp\Big\{    \frac38  x^2  +   O(1) (x^{2+\rho}  \varepsilon_n^\rho+x^2\delta_n^2) \Big\}\\
&&= \sum_{i=1}^n\widetilde{\mathbf{E}}_x\Big[(\xi_i^2-\widetilde{\mathbf{E}}_x [\xi_i^2 | \mathcal{F}_{i-1}])^2 \Big]
\exp\Big\{    \frac38  x^2  +   O(1)  (x^{2+\rho}  \varepsilon_n^\rho+x^2\delta_n^2) \Big\},
\end{eqnarray*}
where the last line follows because  $([M]_i - \langle \widetilde{M}(x) \rangle_i, \mathcal{F}_{i})_{i=0,...,n} $ is a martingale with respect to  the   probability measure $\widetilde{\mathbf{P}}_x.$
%By an argument similar to (\ref{sec8fs7}),
%it follows that for all $1\leq x =  o(\gamma_n^{-1}),$
%$$ \widetilde{\mathbf{E}}_x  [\xi_i^2 | \mathcal{F}_{i-1}] = \mathbf{E}  [\xi_i^2 | \mathcal{F}_{i-1}] + O(1) x^\rho \varepsilon_n^\rho \mathbf{E}  [\xi_i^2 | \mathcal{F}_{i-1}]. $$
%Notice that Lemma \ref{lem003}  remains valid when $\zeta_i(\lambda) $ is replaced by $\widetilde{\zeta}_i(\lambda) $.
Therefore, by Lemma \ref{lem001}, conditions (A1) and (A2) again, we have for all $1\leq x =  o(\gamma_n^{-1}),$
\begin{eqnarray*}
&& \mathbf{E}\Big[([M]_n-\langle \widetilde{M}(x) \rangle_n )^2
e^{  x M_n -[M]_n x^2/8  } \Big] \nonumber \\
&&\leq \sum_{i=1}^n\widetilde{\mathbf{E}}_x\big[ \widetilde{\mathbf{E}}_x\big[ \xi_i^4 |
\mathcal{F}_{i-1}] \big]
\exp\Big\{    \frac38  x^2  +   O(1)  (x^{2+\rho}  \varepsilon_n^\rho+x^2\delta_n^2) \Big\} \nonumber \\
&&= \sum_{i=1}^n\widetilde{\mathbf{E}}_x \Big[    \mathbf{E}[ \xi_i^4  e^{\widetilde{\zeta}_i(x)}|
\mathcal{F}_{i-1}] \Big / \mathbf{E}[e^{\widetilde{\zeta}_i(x)}|
\mathcal{F}_{i-1}]   \Big] \exp\Big\{  \frac38  x^2  +   O(1)  (x^{2+\rho}  \varepsilon_n^\rho+x^2\delta_n^2) \Big\} \\
&&\leq  C_0 \sum_{i=1}^n  \widetilde{\mathbf{E}}_x \Big[   \frac{1 }{x^{2-\rho}} \sum_{i=1}^n \mathbf{E}  [|\xi_i|^{2+\rho}|\mathcal{F}_{i-1} ] \Big]\exp\Big\{  \frac38  x^2  +   O(1) (x^{2+\rho}  \varepsilon_n^\rho+x^2\delta_n^2) \Big\} \\
&&\leq C_1   \varepsilon_n^\rho  \exp\Big\{   \frac38  x^2  +   O(1) (x^{2+\rho}  \varepsilon_n^\rho+x^2\delta_n^2) \Big\}.
\end{eqnarray*}
  Lemma \ref{lem001} implies that for all $1\leq x =  o(\gamma_n^{-1}),$
\begin{eqnarray*}
 && \mathbf{E}\Big[\exp\Big\{ x M_n - \frac18 x^2 [M]_n - \frac38  x^2 \langle M \rangle_n     -O(1) x^{2+\rho}   \sum_{i=1}^n  \mathbf{E}[ |\xi_i|^{2+\rho}  |  \mathcal{F}_{i-1}] \Big\} \Big] \\
&&\leq\mathbf{E}\Big[\exp\Big\{ x M_{n-1} - \frac18 x^2[M]_{n-1} - \frac38  x^2 \langle M \rangle_{n-1}     -O(1) x^{2+\rho}   \sum_{i=1}^{n-1}  \mathbf{E}[ |\xi_i|^{2+\rho}  |  \mathcal{F}_{i-1}] \Big\} \Big] \\
&&\leq 1.
\end{eqnarray*}
By conditions (A1), (A2) and the last inequality, we obtain  for all $1\leq x =  o(\gamma_n^{-1}),$
\begin{eqnarray*}
\mathbf{E}[e^{  x M_n -[M]_n x^2/8  } ]
  \leq \exp\Big\{   \frac38  x^2  +   O(1)   (x^{2+\rho}  \varepsilon_n^\rho+x^2\delta_n^2)  \Big\} .
\end{eqnarray*}
Thus, from (\ref{inesf35}), we deduce that for all $1\leq x =  o(\gamma_n^{-1}),$
\begin{eqnarray}
&&\mathbf{P}\Big((M_n, \sqrt{[M]_n} ) \in \mathcal{E}_1 \Big)  \nonumber \\
&&\leq   C_2  (   \varepsilon_n^\rho + x^{2+2\rho} \varepsilon_n^{2\rho}+ \delta_n^2 ) \exp\Big\{-\frac12 x^2   -\frac14 x  + O(1) (x^{2+\rho}  \varepsilon_n^\rho+x^2\delta_n^2) \Big\} \nonumber \\
&& \leq C_3 (   \varepsilon_n^\rho +   \delta_n^2 ) \exp\Big\{-\frac12 x^2    + O(1) (x^{2+\rho}  \varepsilon_n^\rho+x^2\delta_n^2) \Big\} .\label{3part01}
\end{eqnarray}
Similarly,    we have
\begin{eqnarray}
&&\mathbf{P}\Big((M_n, \sqrt{[M]_n} ) \in \mathcal{E}_2 \Big) \nonumber \\
 & &\leq   (\delta_n/2 + 1/(2x))^{-2} e^{ -\inf_{(u,v) \in \mathcal{E}_2} ( xu-2(vx)^2 ) } \mathbf{E}[([M]_n-1)^2
e^{  x M_n -2[M]_n x^2   } ]    \nonumber\\
&&\leq  C_4 (  \varepsilon_n^\rho+ \delta_n^2 )   \exp\Big\{-\frac12 x^2 + O(1) (x^{2+\rho}  \varepsilon_n^\rho+x^2\delta_n^2) \Big\}. \label{3part02}
\end{eqnarray}
For the last term $\mathbf{P}((M_n, \sqrt{[M]_n} ) \in \mathcal{E}_3 ), $  we obtain the following estimation
\begin{eqnarray}
\mathbf{P}\Big((M_n, \sqrt{[M]_n} ) \in \mathcal{E}_3 \Big)&=&  \mathbf{P}\Big(M_n \geq x\sqrt{[M]_n},\    [M]_n > 16 \Big) \nonumber \\
&\leq& \frac23 x^{-2/3} \exp\Big\{-\frac34 x^2  \Big\},  \label{fsf69}
\end{eqnarray}
where the last line follows by Lemma  \ref{slemms}.
Moreover, by Lemma \ref{slemma58}, it holds that for $\rho \in (0 ,   1],$
\begin{eqnarray*}
\mathbf{P}\Big((M_n, \sqrt{[M]_n} ) \in \mathcal{E}_3 \Big )  &\leq&   \mathbf{P}\Big(  | [M]_n - \langle M\rangle_n  | \geq 1 \Big) \\
&\leq& c\, \varepsilon_n^\rho .
\end{eqnarray*}
By the last inequality  and (\ref{fsf69}), we get for all $1\leq x  =o(\gamma_n^{-1}) $,
\begin{eqnarray}
\mathbf{P}\Big((M_n, \sqrt{[M]_n} ) \in \mathcal{E}_3 \Big)&\leq&  \min\Big\{c\, \varepsilon_n^\rho, \,  \frac23 x^{-2/3} e^{- 3 x^2 /4  }  \Big\}  \nonumber \\
\ \ \ &\leq&  C\, \frac{  \varepsilon_n^{ \rho(2-\rho)/4 } }{ x  ^{  \rho(2+\rho)/4 }} \exp\Big\{-\frac12 x^2  \Big\} .\label{fskl12f}
\end{eqnarray}
Thus, combining  the inequalities (\ref{3part}), (\ref{3part01}), (\ref{3part02}) and (\ref{fskl12f}) together, we deduce that for all $ 1 \leq x =  o(\gamma_n^{-1}),$
\begin{eqnarray}
 && \mathbf{P}\Big(W_n  \geq x ,\, | [M]_n - \langle M\rangle_n |> \delta_n+ 1/(2x) \Big)  \nonumber  \\
  &&  \ \   \leq  C \Big (  \frac{  \varepsilon_n^{ \rho(2-\rho)/4 } }{ x  ^{  \rho(2+\rho)/4 }} + \delta_n^2  \Big) \exp\Big\{-\frac12 x^2 + O(1) (x^{2+\rho}  \varepsilon_n^\rho+x^2\delta_n^2) \Big\}. \label{ineq75}
\end{eqnarray}
Combining  (\ref{ineq62}), (\ref{ineq45}), (\ref{ineq65}), and (\ref{ineq75}),  we obtain  for all $1 \leq x =  o(\gamma_n^{-1}),$
\begin{eqnarray}
&& \frac{\mathbf{P}(W_n\geq x  )}{1-\Phi \left( x\right)}
  \leq \bigg( 1+  C(1+x) \Big(      \frac{  \varepsilon_n^{ \rho(2-\rho)/4 } }{ x  ^{  \rho(2+\rho)/4 }}
+\delta_n^2    \Big) \bigg)   \nonumber \\
& &\ \ \ \ \ \ \ \ \ \ \  \times \left\{ \begin{array}{ll}
 \exp\bigg\{  c_{\rho  } \Big( x^{2+\rho}  \varepsilon_n^\rho+ x^2 \delta_n^2 +(1+x) (  x^\rho \gamma_n^\rho  +  \gamma_n^\rho      +  \delta_n  )     \Big) \bigg \} & \textrm{if $\rho \in (0, 1) $}   \\
  &         \\
 \exp\bigg\{  c  \Big( x^{3}  \varepsilon_n + x^2 \delta_n^2+ (1+x)\left(x \gamma_n +  \gamma_n|\ln \gamma_n|     +  \delta_n    \right) \Big) \bigg \} & \textrm{if $\rho =1 $}
\end{array} \right.  \nonumber \\
 \nonumber \\
&&   \leq  \left\{ \begin{array}{ll}
 \exp\bigg\{  C_{\rho  } \Big( x^{2+\rho}  \varepsilon_n^\rho+ x^2 \delta_n^2 +(1+x)\big(  x^\rho \gamma_n^\rho   +  \gamma_n^\rho      +  \delta_n +\frac{  \varepsilon_n^{ \rho(2-\rho)/4 } }{ x  ^{  \rho(2+\rho)/4 }}\big)   \Big) \bigg \}   \textrm{\ \   if $\rho \in (0, 1) $} \nonumber \\
  & \\
 \exp\bigg\{  C  \Big( x^{3}  \varepsilon_n + x^2 \delta_n^2+ (1+x)\big(x \gamma_n+   \gamma_n|\ln \gamma_n|     +  \delta_n+\frac{  \varepsilon_n^{ \rho(2-\rho)/4 } }{ x  ^{  \rho(2+\rho)/4 }}\big)    \Big) \bigg \}   \textrm{\ \   if $\rho =1,$}
\end{array} \right.    \nonumber
\end{eqnarray}
which gives the desired inequalities.

 For the case of  $0\leq x  < 1,$ the proof of Theorem \ref{th2} is similar to the case of $x=1.$   Notice that $(-S_k, \mathcal{F}_k)_{k=0,...,n}$  also satisfies   conditions (A1), (A2), and (A4).  Thus, the same inequalities hold  when   $\frac{\mathbf{P}(W_n  \geq x)}{1-\Phi \left( x\right)}$ is replaced by $\frac{\mathbf{P}(W_n \leq -x)}{ \Phi \left( -x\right)}$ for all $0\leq x =  o(\gamma_n^{-1})$. This completes the proof of
 Theorem \ref{th2}.    \hfill\qed

\subsection{Proof of Theorem \ref{th3.1}} \label{sect-proof-of-weaker-expansion}
Using Proposition \ref{lem3.5}, by an argument similar to the proof of Theorem \ref{rezultCRMD}, we obtain
the following result.
If $\rho \in (0, 1)$, then for all $0\leq x =o(\max\{ \varepsilon_n^{-1},  \kappa_n^{-1}  \})$,
\begin{eqnarray*}
&&\frac{\mathbf{P}(W_n \geq x)}{1-\Phi \left( x\right)}  \\
 &&=   \exp\bigg\{ \theta c_{\rho} \bigg( x^{2+\rho}  \varepsilon_n^\rho+ x^2 \delta_n^2   +(1+x)\Big(x^{\rho/2} \varepsilon_n^{\rho/2} + \varepsilon_n ^{\rho/(3+\rho)}   + \delta_n+    \frac{ \varepsilon_n^{ \rho(2-\rho)/4 } }{ 1+ x  ^{  \rho(2+\rho)/4 }}    \Big) \bigg) \bigg \} .
\end{eqnarray*}
Notice that the following three inequalities hold:
\begin{eqnarray*}
  x^{1+\rho/2} \varepsilon_n^{\rho/2}&\leq& x^{2+\rho}  \varepsilon_n^\rho   , \ \ \ \ \ \, x \geq  \varepsilon_n^{-  \rho/(2+\rho)} , \\
  x^{ \rho/2} \varepsilon_n^{\rho/2} &\leq& \varepsilon_n ^{\rho/(3+\rho)}   , \ \ \ \ \ 0\leq x  \leq \varepsilon_n^{-  \rho/(2+\rho)},\\
  \varepsilon_n^{\rho(2-\rho)/4} &\leq& \varepsilon_n ^{\rho/(3+\rho)}, \ \ \ \ \  \rho \in (0, 1].
\end{eqnarray*}
Therefore, for $\rho \in (0, 1)$ and all $0\leq x =o(\max\{ \varepsilon_n^{-1},  \kappa_n^{-1}  \})$,
\begin{eqnarray*}
\frac{\mathbf{P}(W_n \geq x)}{1-\Phi \left( x\right)}  =  \exp\bigg\{ \theta c_{\rho} \Big( x^{2+\rho}  \varepsilon_n^\rho+ x^2 \delta_n^2 +(1+x)\big( \varepsilon_n ^{\rho/(3+\rho)}   + \delta_n  \big) \Big) \bigg \} ,
\end{eqnarray*}
which gives the desired equality for $\rho \in (0, 1)$.

Assume that condition (A2) holds for  $\rho \geq 1.$
When $\rho \in [1, 2],$ by Markov's inequality and (\ref{ineds65}), we have for all $x\geq1,$
\begin{eqnarray}
 \mathbf{E}_x\Big[  \mathbf{1}_{\big\{(x \varepsilon_n)^{1/2} < | [M]_n - \langle M\rangle_n   |      \big\}}\Big]
 &\leq&  \frac{ 1}{ (x \varepsilon_n)^{(2+\rho)/4} }  \mathbf{E}_x\big[ | [M]_n - \langle M(x) \rangle_n |^{(2+\rho)/2}    \big] \nonumber \\
&\leq& \frac{ 1}{  x  ^{(2+\rho)/4} }  \varepsilon_n^{(3\rho -2)/4} \nonumber \\
&\leq&   \varepsilon_n^{(3\rho -2)/4}. \label{fsgjk}
\end{eqnarray}
When $\rho > 2,$    Lemma \ref{lema24}  implies that   condition (A2) also holds for $\rho=2$,   with the term $\varepsilon_n$  in condition (A2) replaced by $ 2\varepsilon_n.$  Then   (\ref{fsgjk}) with $\rho=2$ shows that
$$ \mathbf{E}_x\Big[  \mathbf{1}_{\big\{(x \varepsilon_n)^{1/2} < | [M]_n - \langle M\rangle_n   |      \big\}}\Big] \leq    2\varepsilon_n .$$
Thus, for all $\rho \geq 1,$ it holds that
\begin{eqnarray*}
 \mathbf{E}_x\Big[  \mathbf{1}_{\big\{(x \varepsilon_n)^{1/2} < | [M]_n - \langle M\rangle_n   |      \big\}}\Big] \leq \max\Big\{ \varepsilon_n^{(3\rho -2)/4} , \ 2\varepsilon_n \Big\}
  \leq 2\varepsilon_n^{ \rho/(3+\rho)}.
\end{eqnarray*}
Notice that  Lemma \ref{lema24}  also implies that condition (A2) holds  for $\rho=1.$
Therefore, by (\ref{gjmsg02}), (\ref{ineq65}) can be improved to
\begin{eqnarray}
I_2(x) &\leq& e^{\frac14} \exp\bigg\{  -\frac12 x^2+  c_3  \big( x^{2+\rho}  \varepsilon_n^\rho+ x^2 \delta_n^2   \big) \bigg \}   \mathbf{E}_x\Big[  \mathbf{1}_{\big\{(x \varepsilon_n)^{1/2} < | [M]_n - \langle M\rangle_n   |      \big\}}\Big]  \nonumber  \\
 &\leq& C\, \varepsilon_n^{ \rho/(3+\rho)} \exp\bigg\{  -\frac12 x^2+  c_3  \big( x^{3}  \varepsilon_n + x^2 \delta_n^2   \big) \bigg \}. \nonumber
\end{eqnarray}
Notice also that for   $\rho \geq 1$,
\begin{eqnarray}
\mathbf{P}\Big((M_n, \sqrt{[M]_n} ) \in \mathcal{E}_3 \Big)&\leq&  \min\Big\{c\, \varepsilon_n^\rho, \,  \frac23 x^{-2/3} e^{- 3 x^2 /4  }  \Big\}  \nonumber \\
\ \ \ &\leq&  C\, \varepsilon_n^{ \rho/(3+\rho)}\exp\Big\{-\frac12 x^2  \Big\} . \nonumber
\end{eqnarray}
By an argument similar to the proof for case $\rho \in (0, 1)$ but with the term $(x \varepsilon_n)^{\rho/2}$ in (\ref{ineq62}) replaced by  $(x \varepsilon_n)^{1/2}$,  we have   for all $0\leq x =o(\max\{ \varepsilon_n^{-1},  \kappa_n^{-1}  \})$,
\begin{eqnarray*}
\frac{\mathbf{P}(W_n \geq x)}{1-\Phi \left( x\right)} & =&  \exp\bigg\{ \theta c_{1} \Big( x^3  \varepsilon_n + x^2 \delta_n^2 +(1+x)\big(x^{\rho/2} \varepsilon_n^{\rho/2} + \varepsilon_n ^{\rho/(3+\rho)}   + \delta_n   \big) \Big) \bigg \} \\
&=& \exp\bigg\{ \theta c_{2} \Big( x^3  \varepsilon_n + x^2 \delta_n^2 +(1+x)\big(  \varepsilon_n ^{\rho/(3+\rho)}   + \delta_n   \big) \Big) \bigg \},
\end{eqnarray*}
which gives the   desired equality for $\rho \geq 1$.

\subsection{Proof of Corollary \ref{corollary02}}\label{sec7}
\setcounter{equation}{0}
To prove Corollary \ref{corollary02}, we need the following two sides bound  on the tail probabilities of the standard normal random variable:
\begin{eqnarray}\label{fgsgj1}
\frac{1}{\sqrt{2 \pi}(1+x)} e^{-x^2/2} \leq 1-\Phi ( x ) \leq \frac{1}{\sqrt{ \pi}(1+x)} e^{-x^2/2}, \ \ \ \   x\geq 0.
\end{eqnarray}
First, we prove that
\begin{eqnarray}\label{dfgkmsf}
 \limsup_{n\rightarrow \infty}\frac{1}{a_n^2}\ln \mathbf{P}\bigg( \frac{  W_n}{a_n }  \in B  \bigg) \leq  - \inf_{x \in \overline{B}}\frac{x^2}{2}.
\end{eqnarray}
For any given Borel set $B\subset \mathbf{R},$ let $x_0=\inf_{x\in B} |x|.$ Then, it is obvious that $x_0\geq\inf_{x\in \overline{B}} |x|.$
Therefore, by Theorem \ref{th3.1},
\begin{eqnarray*}
 \mathbf{P}\bigg(\frac{  W_n}{a_n }  \in B \bigg)
 &\leq&  \mathbf{P}\Big(\, \big| W_n  \big|  \geq a_n x_0\Big)\\
 &\leq&  2\Big( 1-\Phi \left( a_nx_0\right)\Big) \\
  && \   \times\exp\bigg\{c_{\rho} \bigg(  \left( a_nx_0\right)^{2+\rho} \varepsilon_n^\rho   + \left( a_nx_0\right)^2 \delta_n^2 + \left( a_nx_0\right)( \varepsilon_n^{\rho /(3+\rho)} +\delta_n )  \bigg) \bigg\}.
\end{eqnarray*}
Using   (\ref{fgsgj1}),
we deduce that
\begin{eqnarray*}
\limsup_{n\rightarrow \infty}\frac{1}{a_n^2}\ln \mathbf{P}\bigg(\frac{  W_n}{a_n }  \in B  \bigg)
 \ \leq \  -\frac{x_0^2}{2} \ \leq \  - \inf_{x \in \overline{B}}\frac{x^2}{2} ,
\end{eqnarray*}
which gives (\ref{dfgkmsf}).

Next, we prove that
\begin{eqnarray}\label{dfgk02}
\liminf_{n\rightarrow \infty}\frac{1}{a_n^2}\ln \mathbf{P}\bigg(\frac{  W_n}{a_n }  \in B  \bigg) \geq   - \inf_{x \in B^o}\frac{x^2}{2} .
\end{eqnarray}
We may assume that $B^o \neq \emptyset.$
For any $\varepsilon_1>0,$ there exists an $x_0 \in B^o,$ such that
\begin{eqnarray}
 0< \frac{x_0^2}{2} \leq   \inf_{x \in B^o}\frac{x^2}{2} +\varepsilon_1.
\end{eqnarray}
For $x_0 \in B^o,$ there exists small $\varepsilon_2 \in (0, x_0),$ such that $(x_0-\varepsilon_2, x_0+\varepsilon_2]  \subset B.$
Then it is obvious that $x_0\geq\inf_{x\in \overline{B}} x.$ Without loss of generality, we may assume that $x_0>0.$
By Theorem \ref{th3.1}, we deduce that
\begin{eqnarray*}
\mathbf{P}\bigg(\frac{  W_n}{a_n }  \in B  \bigg)   &\geq&   \mathbf{P}\Big(  W_n  \in (a_n ( x_0-\varepsilon_2), a_n( x_0+\varepsilon_2)] \Big)\\
&\geq&   \mathbf{P}\Big(  W_n   > a_n ( x_0-\varepsilon_2)   \Big)-\mathbf{P}\Big( W_n  >   a_n( x_0+\varepsilon_2) \Big).
%\\
%&&  \geq \Big( 1-\Phi \left( a_n( x_0-\varepsilon_2)\right)\Big)\exp\bigg\{-c_{\alpha_0} \bigg(  \left( a_n( x_0-\varepsilon_2)\right)^{2+\rho} \varepsilon^\rho  + \left(a_n( x_0-\varepsilon_2)\right)^2 \delta^2 \\
%&&  \ \ \   \ \ \   \ \ \   \ \ \ \ \  \ \ \ \ \ \ \ \ \  \ \ \ \ \ \ \ \ \   \ \ \ \ \ \ \ \ \ \ \ \ \ \ \ \ \  \ \ \   \  \ \  \  +  \, a_n( x_0-\varepsilon_2)   \left( \varepsilon_n^{\rho /(3+\rho)} +\delta_n \right)\bigg) \bigg\}\\
%&&  \ \ \    -\Big( 1-\Phi \left( a_n( x_0+\varepsilon_2)\right)\Big)\exp\bigg\{c_{\alpha_0} \bigg(  \left( a_n( x_0+\varepsilon_2)\right)^{2+\rho} \varepsilon^\rho   + \left(a_n( x_0+\varepsilon_2)\right)^2 \delta^2 \\
%&& \ \ \ \  \ \ \ \  \ \ \ \  \ \ \ \  \ \ \ \ \ \ \ \ \ \ \ \ \ \ \ \ \ \ \ \ \ \ \ \ \ \ \ \ \ \ \ \ \ \  \ \ \   \  \ \  + \   a_n( x_0+\varepsilon_2)   \left( \varepsilon_n^{\rho /(3+\rho)} +\delta_n  \right)\bigg) \bigg\}.
\end{eqnarray*}
Using Theorem \ref{th3.1}  and (\ref{fgsgj1}), it follows that
\begin{eqnarray*}
\liminf_{n\rightarrow \infty}\frac{1}{a_n^2}\ln \mathbf{P}\bigg(\frac{  W_n}{a_n }  \in B  \bigg)  \geq  -  \frac{1}{2}( x_0-\varepsilon_2)^2 . \label{ffhms}
\end{eqnarray*}
Letting $\varepsilon_2\rightarrow 0,$  we get
\begin{eqnarray*}
\liminf_{n\rightarrow \infty}\frac{1}{a_n^2}\ln \mathbf{P}\bigg(\frac{  W_n}{a_n }    \in B \bigg) \ \geq\ -  \frac{x_0^2}{2}  \  \geq \   -\inf_{x \in B^o}\frac{x^2}{2} -\varepsilon_1.
\end{eqnarray*}
Because $\varepsilon_1$ can be arbitrarily small, we obtain (\ref{dfgk02}). This completes the proof of Corollary \ref{corollary02}. \hfill\qed

\section*{Acknowledgements}
Fan and Liu have been partially  supported by the
 National Natural Science Foundation of China (Grant nos.\ 11601375,  11626250,  11571052,   11401590 and  11731012), and by  Hunan Natural Science Foundation (China, grant no. 2017JJ2271). Grama and Liu have  benefitted  from the support of the French government ``Investissements d$'$Avenir" program ANR-11-LABX-0020-01.
Shao has been partially supported by Hong Kong RGC GRF 14302515.

\begin{supplement}
%\sname{Supplement A}
\stitle{Supplement to ``Self-normalized Cram\'{e}r type moderate deviations for martingales"}
\slink[doi]{COMPLETED BY THE TYPESETTER}
\sdatatype{.pdf}
\sdescription{The supplement gives the detailed proofs of Propositions 3.1 and 3.2.}
\end{supplement}

\end{document}